\documentclass[11pt,reqno]{amsart}
\usepackage[utf8]{inputenc}
\usepackage[T1]{fontenc}
\usepackage{amsmath,amssymb,amsthm,mathrsfs}
\usepackage[margin=1.1in]{geometry}
\usepackage{enumitem}
\usepackage{tikz}
\usetikzlibrary{arrows.meta,decorations.pathreplacing,calc,positioning}
\usepackage[colorlinks=true,linkcolor=blue,citecolor=blue,urlcolor=blue]{hyperref}

\theoremstyle{plain}
\newtheorem{theorem}{Theorem}[section]
\newtheorem{lemma}[theorem]{Lemma}
\newtheorem{proposition}[theorem]{Proposition}
\newtheorem{corollary}[theorem]{Corollary}

\theoremstyle{definition}
\newtheorem{definition}[theorem]{Definition}
\newtheorem{example}[theorem]{Example}
\newtheorem{remark}[theorem]{Remark}

\newcommand{\U}{\mathscr{U}}
\newcommand{\Us}{\mathscr{U}^{s}}
\newcommand{\Uinv}{\mathscr{U}^{-1}}
\newcommand{\Uinvs}{(\mathscr{U}^{-1})^{s}}
\newcommand{\diag}{\Delta_{X}}
\newcommand{\R}{\mathbb{R}}
\newcommand{\N}{\mathbb{N}}

\title[\texorpdfstring{$\U$}{U}-startpoints in quasi-uniform spaces, revisited]
{On \texorpdfstring{$\U$}{U}-startpoints and \texorpdfstring{$\U$}{U}-fixed points in quasi-uniform spaces:\\ a revisit and extensions}

\author{Y.\,U.~Gaba\textsuperscript{\dag,\ddag,\S}}
\address[\dag]{AI Research and Innovation Nexus for Africa (AIRINA Labs), AI.Technipreneurs, B\'enin}
\address[\ddag]{Sefako Makgatho Health Sciences University (SMU), South Africa}
\address[\S]{African Center for Advanced Studies (ACAS), Cameroon}
\date{\today}

\begin{document}

\begin{abstract}
We extend the $\U$-fixed-point and $\U$-startpoint framework for
quasi-uniform spaces. We reformulate the original existence
framework through the filter axioms and supply a two-sided
contraction condition on a generating family of
quasi-pseudometrics. For this condition we prove a Banach-style
fixed-point theorem on $T_0$ bicomplete quasi-uniform spaces,
with a Picard-type error bound; a Boyd--Wong companion and a
stability estimate follow. The conjugate quasi-uniformity yields
an endpoint version, a selector argument extends the result to
multivalued maps and to common startpoints of commuting families,
a Knaster--Tarski variant covers monotone selectors on
order-complete spaces, and a direct quasi-Hausdorff route handles
weakly contractive multivalued maps. Moreover, the $T_0$-quotient
removes the $T_0$ assumption from the corresponding
involution-based fixed-point theorem.
\end{abstract}

\subjclass[2020]{Primary 47H09, 47H10; Secondary 54E15, 54H25}
\keywords{Quasi-uniform space, $\U$-fixed point, $\U$-startpoint,
$\U$-endpoint, contraction principle, $T_0$-quotient, selector.}

\maketitle

\section{Introduction}\label{sec:intro}

A \emph{quasi-uniformity} on a set $X$ is a filter $\U$ on
$X\times X$ such that every $U\in\U$ contains the diagonal and,
for every $U\in\U$, there exists $V\in\U$ with $V\circ V\subseteq U$.
Unlike a uniformity, an entourage $U\in\U$ is not required to
contain its inverse $U^{-1}$, so $\U$ encodes a directed notion
of closeness. Foundational references
are~\cite{FletcherLindgren,Kunzi,Nachbin,Pervin}; for asymmetric
normed spaces, see~\cite{Cobzas}; for the several inequivalent
completeness notions in this setting,
see~\cite{Andrikopoulos,CarlsonHicks,Doitchinov,ReillySubrahmanyamVamanamurthy,Romaguera};
and for domain-theoretic applications based on formal balls and
Smyth-completeness,
see~\cite{EdalatHeckmann,RomagueraSchellekens,Smyth}.

The symmetric fixed-point condition $x\in Tx$ for a multivalued
map $T\colon X\to 2^X$ misses directional phenomena that the
asymmetric setting permits. To recover them,
Gaba~\cite{Gaba2014a,Gaba2014b,Gaba2015} introduced two dual
notions for quasi-pseudometric spaces: $x_0$ is a
\emph{startpoint} of $T$ if some $z\in Tx_0$ is close to $x_0$ in
the forward direction of the distance, and an \emph{endpoint} if
such a $z$ is close in the reverse direction. Both reduce to the
ordinary fixed point when the distance is symmetric. The
note~\cite{Gaba2018} lifted these notions to general quasi-uniform
spaces, defining $\U$-fixed points, $\U$-startpoints, and
$\U$-endpoints through the preorder $\bigcap\U:=\bigcap_{U\in\U}U$
obtained by intersecting all entourages.

The purpose of this paper is to extend that framework. We
develop an entourage-theoretic reformulation of the existence
framework of~\cite{Gaba2018} (Section~\ref{sec:trivial}); supply
a two-sided contraction condition relative to a generating family
of quasi-pseudometrics, yielding a Banach-style fixed-point
theorem with a Picard-type error bound on $T_0$ bicomplete
quasi-uniform spaces (Section~\ref{sec:contraction}); and extract
three consequences. The conjugate quasi-uniformity $\Uinv$ gives
an endpoint version (Section~\ref{sec:endpoint}). A selector
correspondence lifts the single-valued contraction to multivalued
maps and to common-startpoint statements for finite families
(Sections~\ref{sec:selector}--\ref{sec:common}). The
$T_0$-quotient lifts the involution-based fixed-point
theorem~\cite[Theorem~2.3]{Gaba2018} from $T_0$ quasi-uniform
spaces to arbitrary ones, with the equality $T(x^*)=x^*$
replaced by the equivalence $T(x^*)\equiv_\U x^*$
(Section~\ref{sec:quotient}).

\paragraph{Contributions.} Each of the five main results lifts a
known metric or quasi-pseudometric construction to the full
quasi-uniform setting with a generating family
$\mathcal{D}=\left\lbrace d_\alpha\right\rbrace_{\alpha\in A}$.
The substantive contributions are the family-indexed contraction
condition with constants $k_\alpha$ that may vary across
$\alpha$, the bicompleteness setting in which the five results
coexist coherently, the per-direction Picard error bound of
Corollary~\ref{cor:picard} (which records information that the
symmetric Banach principle on $\Us$ fuses and loses), the
family-indexed Hausdorff functionals
$\left\lbrace H_\alpha\right\rbrace_{\alpha\in A}$ and the
genuinely two-sided contraction condition of
Definition~\ref{def:weakly-contractive-multi} underlying the
direct multivalued route of Theorem~\ref{thm:hausdorff-direct},
and the $T_0$-quotient construction that removes the $T_0$
obstruction from the involution theorem. The two-sided shape of
the single-valued condition~\eqref{eq:contraction} is, by
contrast, logically equivalent to its forward line alone
(Remark~\ref{rem:sharpness}); we adopt it for parallelism with
the multivalued case and for the bookkeeping clarity it provides
in the proofs. None of the five results follows immediately from
a published prior theorem; each requires a new proof, and the
new proofs are systematically constructed in the sections that
follow. We add one framing caveat, developed formally in
Remarks~\ref{rem:reduction-single-valued}
and~\ref{rem:reduction-boyd-wong}: the \emph{existence and
uniqueness} clauses of Theorems~\ref{thm:contraction}
and~\ref{thm:boyd-wong} reduce, via
$d_\alpha^s=\max\left\lbrace d_\alpha,d_\alpha^{-1}\right\rbrace$,
to the standard Banach and Boyd--Wong theorems on the
symmetrisation $(X,\Us)$. The genuine quasi-uniform novelty in the
single-valued setting is therefore the per-direction Picard bound
of Corollary~\ref{cor:picard}, the varying-rate family aspect
(Example~\ref{ex:bw-family}), and the conjugate-invariance that
drives Section~\ref{sec:endpoint}; on the multivalued side the
reduction fails outright, and the substantive novelty accumulates
in Definition~\ref{def:weakly-contractive-multi} and
Theorem~\ref{thm:hausdorff-direct}.
\begin{itemize}
\item Propositions~\ref{prop:triv-single}--\ref{prop:triv-multi}
  reformulate the entourage-fibre existence condition
  of~\cite[\S2]{Gaba2018} through the filter axioms.
  \emph{New: the equivalence with $(x^*,Tx^*)\in\bigcap\U$ via
  axiom~(U2), and the uniformity of the witness $V$ across
  $z\in Tx^*$ in the multivalued case.}
\item Theorem~\ref{thm:contraction} (the Banach-style
  contraction principle) is a family-of-quasi-pseudometrics
  version of the standard quasi-metric Banach contraction
  surveyed in~\cite{Cobzas,Kunzi}. \emph{New: the two-sided
  contraction~\eqref{eq:contraction} with constants $k_\alpha$
  varying across $\alpha$, conjugate-invariant so the endpoint
  side of Section~\ref{sec:endpoint} is reached without
  additional hypotheses.}
\item Theorem~\ref{thm:boyd-wong} (Boyd--Wong companion) is the
  same family-version for the comparison-function class.
  \emph{New: the same conjugate-invariant form, and a
  bi-Cauchy argument that closes the right-USC step on both
  $d_\alpha$ and $d_\alpha^{-1}$ in one stroke.}
\item Corollary~\ref{cor:multi-contraction} (selector
  correspondence) and Theorem~\ref{thm:common-existence} (common
  startpoints) are folklore-grade observations applied to the
  family setting. \emph{New: the explicit reduction of
  multivalued-startpoint and common-startpoint problems to
  contraction problems for selectors of $T$.}
\item Theorem~\ref{thm:hausdorff-direct} (direct quasi-Hausdorff
  route) extends~\cite[Theorem~1]{GabaKarapinarPetruselRadenovic2020}
  (single quasi-pseudometric, left $K$-complete, weakly
  contractive in the additive $(c)^*$-sense) to a generating
  family $\mathcal{D}$ on a bicomplete quasi-uniform space.
  \emph{New: the family functional $H_\alpha$, the two-sided
  weakly contractive condition, and the singleton-collapse
  conclusion $T\xi^*=\left\lbrace \xi^*\right\rbrace$ under
  $T_0$.}
\item Theorem~\ref{thm:involution} (involution theorem via
  $T_0$-quotient) lifts~\cite[Theorem~2.3]{Gaba2018} from $T_0$
  to arbitrary quasi-uniform spaces. \emph{New: the
  $T_0$-quotient $(\widetilde X,\widetilde\U)$ construction
  (Section~\ref{sec:quotient}) and the descent
  Lemma~\ref{lem:descent} that absorb the $T_0$ obstruction.}
\end{itemize}

\paragraph{Related work.} Contraction principles on
quasi-pseudometric and quasi-uniform spaces have been studied
from several directions that the present paper does not
duplicate. Alegre, Mar\'in, and
Romaguera~\cite{AlegreMarinRomaguera} prove a fixed-point
theorem for $w$-distance-based contractions on left $K$-complete
quasi-metric spaces, a setting strictly weaker than
bicompleteness but stronger in the contraction condition. Mar\'in,
Romaguera, and Tirado~\cite{MarinRomagueraTirado} treat set-valued
contractions on preordered quasi-metric spaces; the order
structure used there is independent of the quasi-uniform structure
and differs from the $\le_\U$ preorder employed in our
Knaster--Tarski variant. Secelean, Mathew, and
Wardowski~\cite{SeceleanMathewWardowski} establish $F$-contractions
on quasi-metric spaces, which are not subsumed by either the
linear or Boyd--Wong comparison-function regime treated here.
Sivri~\cite{Sivri} develops Ekeland-style variational principles on
$T_1$-quasi-uniform spaces, a complementary line that produces
approximate critical points rather than exact fixed points. The
constructive Knaster--Tarski theorems of Cousot and
Cousot~\cite{CousotCousot} and Esp\'inola and
Wi\'snicki~\cite{EspinolaWisnicki} are the classical-order-theoretic
predecessors of the variant we record in
Proposition~\ref{prop:knaster-tarski}. The multivalued
start-point theorem of Gaba, Karap{\i}nar, Petru\c{s}el, and
Radenovi\'c~\cite{GabaKarapinarPetruselRadenovic2020} is the
single-quasi-pseudometric instance of
Theorem~\ref{thm:hausdorff-direct} and is connected to it
explicitly in Remark~\ref{rem:hausdorff-vs-selector} and the
single-$d$ specialisation of
Theorem~\ref{thm:hausdorff-direct}.

\section{Preliminaries}\label{sec:prelim}

\paragraph{Notation.} We write $\U$ for a quasi-uniformity on a
set $X$, $\Uinv=\left\lbrace U^{-1}:U\in\U\right\rbrace$
for its conjugate, and $\Us=\U\vee\Uinv$ for its symmetrisation.
A generating family of quasi-pseudometrics is denoted
$\mathcal{D}=\left\lbrace d_\alpha\right\rbrace_{\alpha\in A}$.
For a multivalued map $T\colon X\to 2^X$, we use $z\in Tx$ for
membership in the image, $H$ or $H_\alpha$ for Hausdorff-type
functionals on subsets of $X$, and $\bigcap\U:=\bigcap_{U\in\U}U$
for the associated preorder. All of these conventions follow
\cite{Cobzas,FletcherLindgren,Kunzi,GabaKarapinarPetruselRadenovic2020,Gaba2018}.

We collect the structures used throughout: the quasi-uniformity
$\U$ and its associated preorder $\bigcap\U$, the symmetrisation
$\Us$ and bicompleteness, generation by quasi-pseudometrics, and
the startpoint--endpoint vocabulary of~\cite{Gaba2018}.

\subsection{Quasi-uniformities}
Throughout, $X$ is a non-empty set. A \emph{quasi-uniformity} on
$X$ is a filter $\U$ on $X\times X$ such that
\begin{enumerate}[label=(U\arabic*)]
  \item every $U\in\U$ contains the diagonal
        $\diag=\left\lbrace (x,x):x\in X\right\rbrace$;
  \item for every $U\in\U$ there exists $V\in\U$ with
        $V\circ V\subseteq U$, where
        \[
          V\circ V:=\left\lbrace (x,z)\in X\times X\;:\;
                       \exists\,y\in X\text{ with }(x,y)\in V\text{ and }(y,z)\in V\right\rbrace.
        \]
\end{enumerate}
The pair $(X,\U)$ is a \emph{quasi-uniform space} and the elements
of $\U$ are its \emph{entourages}. Each entourage encodes a
``closeness threshold'' that need not be symmetric: containing
$(x,y)$ in $U$ records that $x$ is close to $y$ in the forward
direction, with no commitment about the backward direction. To
recover symmetric information one passes to the \emph{conjugate}
(or \emph{dual}) quasi-uniformity $\Uinv=\left\lbrace U^{-1}:U\in\U \right \rbrace$, where
$U^{-1}=\left\lbrace (x,y):(y,x)\in U \right \rbrace$, and to the \emph{symmetrisation}
$\Us=\U\vee\Uinv$, generated by the entourages $U\cap U^{-1}$ as
$U$ ranges over $\U$. The symmetrisation $\Us$ is a genuine
uniformity and is the device for importing uniform-space results
into the asymmetric setting. For $x\in X$ and $U\in\U$, the
\emph{$U$-ball at $x$} is $U(x)=\left\lbrace y\in X:(x,y)\in U \right \rbrace$.

Intersecting all entourages produces a relation rather than a
single entourage. The set $\bigcap\U:=\bigcap_{U\in\U}U$ is the
graph of a preorder on $X$~\cite{Nachbin}; we call
$(X,\bigcap\U)$ the \emph{associated preorder} and write
$x\le_\U y$ for $(x,y)\in\bigcap\U$ when the preorder is being
named explicitly. The asymmetry of $\U$ is exactly what allows
this preorder to be non-trivial. The quasi-uniformity $\U$ is
\emph{$T_0$} when $\bigcap\U$ is antisymmetric, equivalently when
$\Us$ is Hausdorff~\cite[\S 2]{Kunzi}; the $T_0$ condition is what
licences the move from preorder relations $x\equiv_\U Tx$ to
genuine equalities $x=Tx$, and its absence is what
Section~\ref{sec:quotient} addresses via the $T_0$-quotient.

\subsection{Generation by quasi-pseudometrics}
A quasi-uniformity is often specified by giving a distance-like
function and reading off its entourages. A
\emph{quasi-pseudometric} on $X$ is a function
$d\colon X\times X\to[0,\infty)$ with $d(x,x)=0$ and the
triangle inequality $d(x,z)\le d(x,y)+d(y,z)$. The two axioms of
a metric that are not asked for here (symmetry $d(x,y)=d(y,x)$ and separation $d(x,y)=0\Rightarrow x=y$) are dropped
independently: omitting symmetry alone yields a
quasi-pseudometric, and reinstating separation while leaving
symmetry off recovers a quasi-metric.

Every family $\mathcal{D}$ of quasi-pseudometrics generates a
quasi-uniformity having
$\left\lbrace U_{d,\varepsilon}:d\in\mathcal{D},\,\varepsilon>0 \right \rbrace$ as a
subbase, where $U_{d,\varepsilon}=\left\lbrace (x,y):d(x,y)<\varepsilon \right \rbrace$
is the $\varepsilon$-fibre of $d$. Conversely~\cite{FletcherLindgren,Kunzi},
every quasi-uniformity is generated by a (saturated) family of
quasi-pseudometrics, and the generating family is \emph{countable}
when $\U$ has a countable base. This back-and-forth is what
licences us to state the contraction principle of
Section~\ref{sec:contraction} through quasi-pseudometric
inequalities while reading the conclusion at the level of the
quasi-uniformity.

\subsection{Nets, bi-Cauchy nets, and bicompleteness}
Convergence and completeness in a general (quasi-)uniform space
must be phrased in terms of nets, not sequences: the entourage
filter may have no countable base, and sequences may then fail to
detect convergence or completeness. We recall the vocabulary.

A \emph{directed set} is a partially ordered set $(\Lambda,\le)$
in which every pair of elements has a common upper bound: for
all $\lambda_1,\lambda_2\in\Lambda$ there is
$\lambda_3\in\Lambda$ with $\lambda_1\le\lambda_3$ and
$\lambda_2\le\lambda_3$. A \emph{net} on $X$ is a function
$\Lambda\to X$, $\lambda\mapsto x_\lambda$, indexed by a directed
set $\Lambda$, and we write it $(x_\lambda)_{\lambda\in\Lambda}$
or simply $(x_\lambda)$. Sequences are the special case
$\Lambda=\N$ with its usual order; arbitrary nets cover the
non-countable indexing situations one meets in general
uniformities. A net $(x_\lambda)$ in $(X,\U)$ is \emph{Cauchy
for $\U$} if for every $U\in\U$ there is $\lambda_0\in\Lambda$
with $(x_\lambda,x_\mu)\in U$ whenever
$\lambda_0\le\lambda\le\mu$; it \emph{converges} to $x\in X$ if
for every $U\in\U$ there is $\lambda_0$ with
$(x,x_\lambda)\in U$ for $\lambda\ge\lambda_0$. A net is
\emph{bi-Cauchy} if it is Cauchy with respect to the
symmetrisation $\Us$, that is, Cauchy for both $\U$ and $\Uinv$
simultaneously.

The space $(X,\U)$ is \emph{bicomplete} if every bi-Cauchy net
converges in $\Us$, equivalently if the uniform space $(X,\Us)$
is complete in the usual sense. When $\U$ is generated by
quasi-pseudometrics $\left\lbrace d_\alpha \right \rbrace_{\alpha\in A}$, the bi-Cauchy
condition unpacks explicitly: for every $\alpha\in A$ and every
$\varepsilon>0$ there is $\lambda_0$ with
$d_\alpha(x_\lambda,x_\mu)<\varepsilon$ and
$d_\alpha(x_\mu,x_\lambda)<\varepsilon$ whenever
$\lambda,\mu\ge\lambda_0$. The two inequalities reflect the two
directions of $d_\alpha$ that the symmetrisation tracks at once,
and they are what the proof of Theorem~\ref{thm:contraction}
verifies along the iterates of a contractive map.

\subsection{\texorpdfstring{$\U$}{U}-fixed points, startpoints, endpoints}
Fixed-point statements in symmetric settings ask for the equality
$x_0=Tx_0$, which in the asymmetric setting demands forward and
backward agreement at once and is therefore often too strong.
Gaba introduced two dual relaxations for quasi-pseudometric
spaces in~\cite{Gaba2014a,Gaba2014b,Gaba2015}, and lifted them to
arbitrary quasi-uniform spaces in~\cite{Gaba2018}, by separating
the two directions of $\bigcap\U$: a \emph{$\U$-startpoint}
demands proximity in the forward direction, a \emph{$\U$-endpoint}
in the reverse, and a \emph{$\U$-fixed point} is the
single-valued analogue of a $\U$-startpoint. The three notions
agree when $\U$ is symmetric, and become strictly finer
otherwise.

\begin{definition}[\cite{Gaba2018}]\label{def:fixed-start-end}
Let $(X,\U)$ be a quasi-uniform space.
\begin{enumerate}[label=(\alph*)]
  \item A point $x_0\in X$ is a \emph{$\U$-fixed point} of a
        single-valued map $T\colon X\to X$ if
        $(x_0,Tx_0)\in\bigcap\U$.
  \item A point $x_0\in X$ is a \emph{$\U$-startpoint} of a multivalued
        map $T\colon X\to 2^X$ if there exists $z\in Tx_0$ with
        $(x_0,z)\in\bigcap\U$. It is a \emph{$\U$-endpoint} if there
        exists $z\in Tx_0$ with $(z,x_0)\in\bigcap\U$.
\end{enumerate}
A point $z\in Tx_0$ satisfying $(x_0,z)\in\bigcap\U$ (respectively
$(z,x_0)\in\bigcap\U$) is called a \emph{startpoint witness}
(respectively an \emph{endpoint witness}) for $x_0$, and we say
that $x_0$ is a $\U$-startpoint \emph{witnessed by} $z$
(respectively a $\U$-endpoint \emph{witnessed by} $z$). The
witnessing element of a $\U$-startpoint need not be unique;
Example~\ref{ex:start-vs-end} below exhibits points with multiple
witnesses.
\end{definition}

\begin{remark}\label{rem:trivial-cases}
If both $(x_0,Tx_0)$ and $(Tx_0,x_0)$ belong to $\bigcap\U$ and
$\U$ is $T_0$, then $Tx_0=x_0$. Without the $T_0$ hypothesis
$\bigcap\U$ is only a preorder, and a $\U$-fixed point need not be
a fixed point of $T$ in the usual sense.
\end{remark}

The next example fixes intuition. It shows that, even on a
familiar space, the $\U$-startpoint, $\U$-endpoint and ordinary
fixed-point conditions can pick out genuinely different sets of
points.

\begin{example}[Startpoints, endpoints, and fixed points compared]\label{ex:start-vs-end}
Let $X=[0,1]$ with the upper quasi-pseudometric
$d(x,y)=\max\left\lbrace x-y,0 \right \rbrace$, so that $d(x,y)=0$ iff $x\le y$. Then
$\bigcap\U_d=\left\lbrace(x,y)\in X^2:x\le y\right \rbrace$ is the
standard order on $[0,1]$. For the multivalued map
\[
  T(x):=\left\lbrace x+\tfrac12,\;x-\tfrac12\right \rbrace\cap[0,1],
\]
which sends a point to its two reachable neighbours under a half-step,
the three notions split as follows:
\begin{enumerate}[label=\textup{(\arabic*)}]
  \item Every $x\in[0,\tfrac12]$ is a $\U$-startpoint of $T$, witnessed by
        $x+\tfrac12\in T(x)$, since $x\le x+\tfrac12$.
  \item Every $x\in[\tfrac12,1]$ is a $\U$-endpoint of $T$, witnessed by
        $x-\tfrac12\in T(x)$, since $x-\tfrac12\le x$.
  \item Only $x=\tfrac12$ is simultaneously a $\U$-startpoint and a
        $\U$-endpoint, witnessed by $z=1$ on the startpoint side and
        $z=0$ on the endpoint side. The point $\tfrac12$ is not a
        fixed point of $T$, since
        $\tfrac12\notin T(\tfrac12)=\left\lbrace 0,1\right \rbrace$.
\end{enumerate}
The $\U$-startpoints and $\U$-endpoints together cover $X$ but
overlap only at $\tfrac12$. Figure~\ref{fig:start-vs-end} shows
the situation. The example also illustrates the asymmetry of the
preorder: distance from $x$ to $x+\tfrac12$ is $0$ (the forward
direction is free), and distance from $x+\tfrac12$ to $x$ is
$\tfrac12$ (the backward direction is costly).
\end{example}

\begin{figure}[htbp]
\centering
\begin{tikzpicture}[x=13cm, y=1.8cm, >={Stealth[length=3mm]}]
  \draw[line width=1.6pt, blue!70!black] (0,1.00) -- (0.5,1.00);
  \draw[line width=1.6pt, blue!70!black] (0,0.93) -- (0,1.07);
  \draw[line width=1.6pt, blue!70!black] (0.5,0.93) -- (0.5,1.07);
  \node[above=4pt, font=\small, blue!70!black] at (0.25,1.07)
       {$\U$-startpoints: witness $z=x+\tfrac12$};
  \draw[->, thick, blue!70!black]
       (0.25,0.20) to[bend left=22]
       node[midway, above=2pt, font=\small] {$x\mapsto x+\tfrac12$}
       (0.75,0.20);
  \filldraw[blue!70!black] (0.25,0) circle (1.8pt);
  \filldraw[blue!70!black] (0.75,0) circle (1.8pt);
  \draw[thick] (-0.04,0) -- (1.04,0);
  \foreach \x/\lab in {0/$0$, 0.25/$\tfrac14$, 0.75/$\tfrac34$, 1/$1$} {
    \draw (\x,0) -- (\x,-0.07);
    \node[below=4pt, font=\small] at (\x,-0.07) {\lab};
  }
  \draw (0.5,0) -- (0.5,0.07);
  \node[above=4pt, font=\small] at (0.5,0.07) {$\tfrac12$};
  \draw[->, thick, red!70!black]
       (0.75,-0.50) to[bend right=22]
       node[midway, below=2pt, font=\small] {$x\mapsto x-\tfrac12$}
       (0.25,-0.50);
  \filldraw[red!70!black] (0.75,0) circle (1.8pt);
  \filldraw[red!70!black] (0.25,0) circle (1.8pt);
  \draw[line width=1.6pt, red!70!black] (0.5,-1.00) -- (1,-1.00);
  \draw[line width=1.6pt, red!70!black] (0.5,-0.93) -- (0.5,-1.07);
  \draw[line width=1.6pt, red!70!black] (1,-0.93) -- (1,-1.07);
  \node[below=4pt, font=\small, red!70!black] at (0.75,-1.07)
       {$\U$-endpoints: witness $z=x-\tfrac12$};
\end{tikzpicture}
\caption{The map
$T(x)=\left\lbrace x+\tfrac12,\,x-\tfrac12\right \rbrace\cap[0,1]$
on $X=[0,1]$ with the upper quasi-pseudometric. The blue bracket
above marks the set of $\U$-startpoints, $[0,\tfrac12]$, each
witnessed by its forward half-step; the red bracket below marks
the set of $\U$-endpoints, $[\tfrac12,1]$, each witnessed by its
backward half-step. The two sets meet only at $x=\tfrac12$, the
unique point with both witnesses available.}
\label{fig:start-vs-end}
\end{figure}

\section{An elaborated entourage formulation}\label{sec:trivial}

The existence framework of~\cite{Gaba2018} is built on a
non-emptiness condition for a certain intersection of entourage
fibres at the candidate point. We rewrite this condition through
the filter axioms~(U1)--(U2), placing it in direct correspondence
with the $\bigcap\U$-membership relation that defines $\U$-fixed
points and $\U$-startpoints in
Definition~\ref{def:fixed-start-end}. The resulting
characterisation feeds the $T_0$-quotient argument of
Section~\ref{sec:quotient} and identifies a verifiable structural
property of $T$ that supplies the contraction principle of
Section~\ref{sec:contraction}.

\begin{proposition}\label{prop:triv-single}
Let $(X,\U)$ be a quasi-uniform space, $T\colon X\to X$ and
$x^*\in X$. The following are equivalent:
\begin{enumerate}[label=\textup{(\roman*)}]
  \item $U(x^*)\cap U^{-1}(Tx^*)\neq\emptyset$ for every $U\in\U$;
  \item $(x^*,Tx^*)\in\bigcap\U$, i.e.\ $x^*$ is a $\U$-fixed point of $T$.
\end{enumerate}
\end{proposition}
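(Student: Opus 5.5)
The plan is to unpack both conditions in terms of the $U$-ball notation and then link them using the composition axiom~(U2) in one direction and the filter property plus the diagonal axiom~(U1) in the other. First I record what (i) means concretely. Here $U(x^*)=\{y:(x^*,y)\in U\}$. For $U^{-1}(Tx^*)$, applying the ball notation to the inverse relation gives $\{y:(Tx^*,y)\in U^{-1}\}=\{y:(y,Tx^*)\in U\}$. So a point $y\in U(x^*)\cap U^{-1}(Tx^*)$ is precisely an intermediate point with $(x^*,y)\in U$ and $(y,Tx^*)\in U$. Equivalently, (i) says that $(x^*,Tx^*)\in U\circ U$ for every $U\in\U$.

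\emph{(ii)$\Rightarrow$(i).} Suppose $(x^*,Tx^*)\in U$ for every $U\in\U$, and fix $U\in\U$. Take $y=Tx^*$. Then $(x^*,y)\in U$ by hypothesis, and $(y,Tx^*)=(Tx^*,Tx^*)\in\diag\subseteq U$ by~(U1). Hence $y\in U(x^*)\cap U^{-1}(Tx^*)$. The choice $y=x^*$ would work equally well, using~(U1) on the first coordinate pair instead.

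\emph{(i)$\Rightarrow$(ii).} Fix $W\in\U$. By~(U2) there is $V\in\U$ with $V\circ V\subseteq W$. Applying (i) to the entourage $V$ (which is legitimate, since (i) is assumed for every entourage) gives a point $y$ with $(x^*,y)\in V$ and $(y,Tx^*)\in V$. Therefore $(x^*,Tx^*)\in V\circ V\subseteq W$. Since $W$ was arbitrary, $(x^*,Tx^*)\in\bigcap\U$, which is (ii) by Definition~\ref{def:fixed-start-end}(a).

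There is no substantive obstacle; the step needing the most care is the direction convention for $U^{-1}(Tx^*)$. One must confirm that it unwinds to $(y,Tx^*)\in U$ and not to $(Tx^*,y)\in U$, because only the former composes correctly with $(x^*,y)\in U$ under the paper's definition of $V\circ V$. With that convention fixed, both directions are one-line applications of (U1) and (U2) respectively.
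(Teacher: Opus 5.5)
Your proof is correct and matches the paper's argument: (U1) gives (ii)$\Rightarrow$(i), and choosing $V$ with $V\circ V\subseteq W$ via (U2) gives (i)$\Rightarrow$(ii). The only cosmetic difference is that the paper uses the witness $y=x^*$ where you use $y=Tx^*$, and you already note that $y=x^*$ works too.
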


\begin{proof}
(ii)$\Rightarrow$(i): Pick any $U\in\U$. Setting $y=x^*$ gives
$(x^*,y)=(x^*,x^*)\in\diag\subseteq U$ and $(y,Tx^*)=(x^*,Tx^*)\in\bigcap\U\subseteq U$,
so $y\in U(x^*)\cap U^{-1}(Tx^*)$.

(i)$\Rightarrow$(ii): Fix $W\in\U$ and pick $V\in\U$ with $V\circ V\subseteq W$
(axiom (U2)). By (i) applied to $V$, there is $y\in X$ with
$(x^*,y)\in V$ and $(y,Tx^*)\in V$, hence $(x^*,Tx^*)\in V\circ V\subseteq W$.
Since $W\in\U$ was arbitrary, $(x^*,Tx^*)\in\bigcap\U$.
\end{proof}

\begin{proposition}\label{prop:triv-multi}
Let $(X,\U)$ be a quasi-uniform space, $T\colon X\to 2^X$ and
$x^*\in X$ with $Tx^*\neq\emptyset$. The following are equivalent:
\begin{enumerate}[label=\textup{(\roman*)}]
  \item $U(x^*)\cap U^{-1}(z)\neq\emptyset$ for every $U\in\U$ and every
        $z\in Tx^*$;
  \item $(x^*,z)\in\bigcap\U$ for every $z\in Tx^*$.
\end{enumerate}
In particular, condition \textup{(i)} implies that \emph{every}
$z\in Tx^*$ witnesses $x^*$ as a $\U$-startpoint of $T$.
\end{proposition}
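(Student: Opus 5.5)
The argument reduces Proposition~\ref{prop:triv-multi} pointwise to Proposition~\ref{prop:triv-single}. Alternatively, it repeats that argument verbatim with $Tx^*$ replaced by an arbitrary fixed $z\in Tx^*$. The key observation is that the proof of Proposition~\ref{prop:triv-single} never used that $Tx^*$ is the image of $x^*$ under a map. It only used a fixed pair of points $(x^*,w)$ and showed that
\[
U(x^*)\cap U^{-1}(w)\neq\emptyset \text{ for all } U\in\U
\quad\Longleftrightarrow\quad
(x^*,w)\in\textstyle\bigcap\U .
\]
So I will first isolate this two-point statement, for arbitrary $x^*,w\in X$, as the working core.

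\textbf{(ii)$\Rightarrow$(i).} Fix $z\in Tx^*$ and $U\in\U$. Take $y=x^*$. Then $(x^*,y)\in\diag\subseteq U$ by (U1). Also $(y,z)=(x^*,z)\in\bigcap\U\subseteq U$ by hypothesis. Hence $y\in U(x^*)\cap U^{-1}(z)$.

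\textbf{(i)$\Rightarrow$(ii).} Fix $z\in Tx^*$ and an arbitrary $W\in\U$. Choose $V\in\U$ with $V\circ V\subseteq W$ by (U2). Applying (i) to the pair $(V,z)$ yields $y$ with $(x^*,y)\in V$ and $(y,z)\in V$. Therefore $(x^*,z)\in V\circ V\subseteq W$. Since $W$ was arbitrary, $(x^*,z)\in\bigcap\U$. Since $z$ was arbitrary, (ii) holds.

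The ``in particular'' clause is then immediate from Definition~\ref{def:fixed-start-end}(b). Each $z\in Tx^*$ satisfies $(x^*,z)\in\bigcap\U$ and is therefore a startpoint witness. The hypothesis $Tx^*\neq\emptyset$ ensures at least one witness exists, so $x^*$ is genuinely a $\U$-startpoint.

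There is no real obstacle here. The only point needing care is the order of quantifiers. The entourage $V$ chosen in (U2) depends only on $W$, not on $z$, so the same $V$ serves every $z\in Tx^*$ simultaneously. This is the uniformity of the witness across $z$ advertised in the introduction, and it is worth remarking on explicitly after the proof.
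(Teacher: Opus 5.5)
Your proof is correct and follows the paper's argument step for step: the diagonal witness $y=x^*$ via (U1) gives (ii)$\Rightarrow$(i), and the (U2) choice of $V$ with $V\circ V\subseteq W$, independent of $z$, gives (i)$\Rightarrow$(ii). Your extra observation that $Tx^*\neq\emptyset$ guarantees at least one witness, so that $x^*$ is genuinely a $\U$-startpoint, is a harmless sharpening of the paper's reading of the ``in particular'' clause.
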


\begin{proof}
We prove both directions explicitly, paralleling the structure of
the proof of Proposition~\ref{prop:triv-single} but tracking the
pointwise variation in $z\in Tx^*$.

\emph{(ii) $\Rightarrow$ (i).} Suppose $(x^*,z)\in\bigcap\U$ for
every $z\in Tx^*$. Fix any $U\in\U$ and any $z\in Tx^*$. Setting
$y:=x^*$ gives $(x^*,y)=(x^*,x^*)\in\diag\subseteq U$ (axiom
(U1)) and $(y,z)=(x^*,z)\in\bigcap\U\subseteq U$. Hence
$y=x^*\in U(x^*)\cap U^{-1}(z)$, so this intersection is
non-empty.

\emph{(i) $\Rightarrow$ (ii).} Suppose
$U(x^*)\cap U^{-1}(z)\neq\emptyset$ for every $U\in\U$ and every
$z\in Tx^*$. Fix any $z\in Tx^*$ and any $W\in\U$. By axiom
(U2), there exists $V\in\U$ with $V\circ V\subseteq W$. This
$V$ depends only on $W$, not on $z$. By hypothesis (i) applied
to this $V$ and this $z$, there is $y\in X$ with
$(x^*,y)\in V$ and $(y,z)\in V$, hence
$(x^*,z)\in V\circ V\subseteq W$. Since $W\in\U$ was arbitrary,
$(x^*,z)\in\bigcap_{W\in\U}W=\bigcap\U$. The choice of $z$ was
also arbitrary, so $(x^*,z)\in\bigcap\U$ for every $z\in Tx^*$.

The ``in particular'' clause restates condition (ii): the
relation $(x^*,z)\in\bigcap\U$ is exactly the assertion that
$z$ witnesses $x^*$ as a $\U$-startpoint of $T$
(Definition~\ref{def:fixed-start-end}(b)).
\end{proof}

\begin{remark}\label{rem:original-results}
\cite[Theorems~2.1 and 2.4]{Gaba2018} establish the existence of
$\U$-fixed points and $\U$-startpoints under condition~(i) of
Propositions~\ref{prop:triv-single} and~\ref{prop:triv-multi}.
The propositions above place that condition in entourage-theoretic
form via the filter axioms, making the relationship between the
hypothesis and the preorder $\bigcap\U$ transparent. The next
section uses this elaboration to exhibit a verifiable structural
property of $T$, a contraction relative to a generating family of quasi-pseudometrics, that combined with a completeness
assumption on $(X,\U)$, implies condition~(i).
\end{remark}

\section{A contraction principle in quasi-uniform spaces}\label{sec:contraction}

Since a quasi-uniformity is generated by a family of
quasi-pseudometrics, the Banach principle adapts by asking the
contractive inequality to hold on each member of the family at
once. We call the resulting notion a $\mathcal{D}$-contraction.
Banach-type fixed-point theorems for asymmetric distance
structures form a well-developed literature, surveyed
in~\cite{Cobzas,Kunzi}; the condition below differs in two ways.
It engages a generating family $\mathcal{D}$ rather than a single
quasi-metric, with constants $k_\alpha$ that may vary with
$\alpha$; and it is conjugate-invariant, so the endpoint side of
Section~\ref{sec:endpoint} is reached without further hypotheses.

Throughout this section, $\U$ is generated by a family
$\mathcal{D}=\left\lbrace d_\alpha:\alpha\in A\right\rbrace$ of
quasi-pseudometrics on $X$. Such a family always
exists~\cite{FletcherLindgren,Kunzi}, and the arguments below use
no saturation property of $\mathcal{D}$.

\begin{definition}\label{def:contraction}
A self-map $T\colon X\to X$ is a \emph{$\mathcal{D}$-contraction} if
there is a family $\left\lbrace k_\alpha:\alpha\in A \right \rbrace\subseteq[0,1)$ such that,
for every $\alpha\in A$ and every $x,y\in X$,
\begin{equation}\label{eq:contraction}
  d_\alpha(Tx,Ty)\le k_\alpha\,d_\alpha(x,y)
  \quad\text{and}\quad
  d_\alpha(Ty,Tx)\le k_\alpha\,d_\alpha(y,x).
\end{equation}
\end{definition}

Both lines of~\eqref{eq:contraction} are used in the proof below:
the first controls bi-Cauchy behaviour, and the second secures
$\Us$-continuity of $T$. The condition is satisfied in all the
concrete settings considered in this paper:
Examples~\ref{ex:asymmetric-norm} and~\ref{ex:formal-balls} below,
and Examples~\ref{ex:discrete-selector}
and~\ref{ex:continuous-selector} of Section~\ref{sec:selector}.

\begin{theorem}\label{thm:contraction}
Let $(X,\U)$ be a $T_0$ bicomplete quasi-uniform space generated by a
family $\mathcal{D}=\left\lbrace d_\alpha \right \rbrace_{\alpha\in A}$ of
quasi-pseudometrics, and let $T\colon X\to X$ be a $\mathcal{D}$-contraction.
Then $T$ has a unique fixed point $x^*\in X$. Moreover, for every
$x_0\in X$ the sequence of iterates $(T^n x_0)_{n\ge0}$ converges to
$x^*$ in the symmetrisation $\Us$.
\end{theorem}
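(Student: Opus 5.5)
The plan is to run the Picard iteration one quasi-pseudometric at a time and then assemble the pieces through bicompleteness. Fix $x_0\in X$ and put $x_n:=T^nx_0$. For each $\alpha\in A$, induction on the two lines of~\eqref{eq:contraction} gives
\[
d_\alpha(x_n,x_{n+1})\le k_\alpha^n\,d_\alpha(x_0,x_1),\qquad d_\alpha(x_{n+1},x_n)\le k_\alpha^n\,d_\alpha(x_1,x_0).
\]
Summing a geometric series with the triangle inequality then yields, for $m>n$,
\[
d_\alpha(x_n,x_m)\le \frac{k_\alpha^n}{1-k_\alpha}\,d_\alpha(x_0,x_1),\qquad d_\alpha(x_m,x_n)\le \frac{k_\alpha^n}{1-k_\alpha}\,d_\alpha(x_1,x_0).
\]
Both bounds tend to $0$ for each fixed $\alpha$, with a rate that depends on $\alpha$, so no uniformity in $\alpha$ is needed. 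By the explicit unpacking of the bi-Cauchy condition in Section~\ref{sec:prelim}, $(x_n)$ is bi-Cauchy. I would note that the subbase $\{U_{d_\alpha,\varepsilon}\}$ only requires checking finitely many $\alpha$ at a time, and finite intersections are handled by taking a maximum of thresholds. Bicompleteness then produces $x^*$ with $x_n\to x^*$ in $\Us$, that is, $d_\alpha(x^*,x_n)\to0$ and $d_\alpha(x_n,x^*)\to0$ for every $\alpha$.

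Next I show that $x^*$ is a fixed point. Both lines of~\eqref{eq:contraction} make $T$ continuous for $\Us$: $d_\alpha(Tx^*,x_{n+1})\le k_\alpha d_\alpha(x^*,x_n)\to0$, and likewise $d_\alpha(x_{n+1},Tx^*)\to0$. Hence $x_{n+1}\to Tx^*$ in $\Us$. Then
\[
d_\alpha(x^*,Tx^*)\le d_\alpha(x^*,x_{n+1})+d_\alpha(x_{n+1},Tx^*)\to0,
\]
and symmetrically $d_\alpha(Tx^*,x^*)=0$, for all $\alpha$. Since $\mathcal{D}$ generates $\U$, this means that $(x^*,Tx^*)$ and $(Tx^*,x^*)$ lie in every subbasic entourage, hence in $\bigcap\U$. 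The $T_0$ hypothesis, via Remark~\ref{rem:trivial-cases}, upgrades this to $Tx^*=x^*$. Equivalently, limits in the Hausdorff uniformity $\Us$ are unique.

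For uniqueness, suppose $Ty^*=y^*$ as well. Then $d_\alpha(x^*,y^*)=d_\alpha(Tx^*,Ty^*)\le k_\alpha d_\alpha(x^*,y^*)$ with $k_\alpha<1$, so $d_\alpha(x^*,y^*)=0$; likewise $d_\alpha(y^*,x^*)=0$, for all $\alpha$. Then $T_0$ gives $x^*=y^*$. This also shows that the limit obtained in the first step does not depend on $x_0$, which settles the ``moreover'' clause: for every starting point the iterates converge in $\Us$ to the same $x^*$.

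The main obstacle is bookkeeping rather than depth. One has to make sure that per-$\alpha$ convergence with $\alpha$-dependent rates really gives Cauchy-ness and convergence in $\Us$. This requires passing from the subbase $\{U_{d_\alpha,\varepsilon}\cap U_{d_\alpha,\varepsilon}^{-1}\}$ to finite intersections. It also requires that sequences suffice as nets here: bicompleteness is stated for nets, and a sequence is a particular net, so that direction is harmless. I would make that subbase-to-base step explicit and leave everything else as routine estimates.
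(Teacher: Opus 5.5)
Your proof is correct and follows the paper's argument step for step: two-directional geometric decay, the geometric tail bounds giving a bi-Cauchy sequence, bicompleteness, identification of the $\Us$-limit as a fixed point, and uniqueness from $k_\alpha<1$ together with $T_0$. The only cosmetic differences are that you check $d_\alpha(x^*,Tx^*)=d_\alpha(Tx^*,x^*)=0$ by a direct triangle-inequality computation rather than citing uniform continuity of $T$ on $(X,\Us)$ and Hausdorffness of $\Us$, and that you state explicitly, via uniqueness, why the limit does not depend on $x_0$, which the paper leaves implicit.
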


\begin{proof}
\emph{Existence.} Fix $x_0\in X$ and set $x_n=T^n x_0$ for $n\ge1$.
For each $\alpha\in A$ and $n\ge0$,
\[
  d_\alpha(x_n,x_{n+1})\le k_\alpha\, d_\alpha(x_{n-1},x_n)
  \le \cdots\le k_\alpha^n\, d_\alpha(x_0,x_1),
\]
and analogously $d_\alpha(x_{n+1},x_n)\le k_\alpha^n\,d_\alpha(x_1,x_0)$.
For $n<m$ the triangle inequality gives
\[
  d_\alpha(x_n,x_m)\le \sum_{i=n}^{m-1} d_\alpha(x_i,x_{i+1})
  \le \frac{k_\alpha^n}{1-k_\alpha}\, d_\alpha(x_0,x_1),
\]
and similarly
\[
  d_\alpha(x_m,x_n)\le \frac{k_\alpha^n}{1-k_\alpha}\, d_\alpha(x_1,x_0).
\]
Both bounds tend to $0$ as $n\to\infty$, uniformly in $m>n$. Hence
$(x_n)$ is bi-Cauchy. By bicompleteness, $x_n\to x^*$ in $\Us$ for
some $x^*\in X$.

For every $\alpha\in A$, the bounds
$d_\alpha(Tx,Ty)\le k_\alpha\,d_\alpha(x,y)\le d_\alpha(x,y)$ and
$d_\alpha(Ty,Tx)\le k_\alpha\,d_\alpha(y,x)\le d_\alpha(y,x)$ make
$T$ non-expansive in each $d_\alpha$ and in each $d_\alpha^{-1}$,
hence non-expansive in the symmetric pseudometric
$d_\alpha^{s}:=\max\left\lbrace d_\alpha,d_\alpha^{-1} \right \rbrace$. Since the family
$\left\lbrace d_\alpha^{s} \right \rbrace_{\alpha\in A}$ generates the symmetrisation
$\Us$, $T$ is uniformly continuous on $(X,\Us)$. The previous step
gave $x_n\to x^*$ in $\Us$, so $Tx_n\to Tx^*$ in $\Us$. Combining
this with $Tx_n=x_{n+1}\to x^*$ and the Hausdorff property of
$\Us$ (which holds because $\U$ is $T_0$; this is the only step
in the existence \emph{half} of the proof that uses the $T_0$
hypothesis, which is also invoked below in the uniqueness step),
we obtain $Tx^*=x^*$. (Without $T_0$ the same argument delivers only
$Tx^*\equiv_\U x^*$, the conclusion handled in
Section~\ref{sec:quotient}.)

\emph{Uniqueness.} If $Tx^*=x^*$ and $Ty^*=y^*$, then for every
$\alpha\in A$,
\[
  d_\alpha(x^*,y^*)=d_\alpha(Tx^*,Ty^*)\le k_\alpha\, d_\alpha(x^*,y^*),
\]
forcing $d_\alpha(x^*,y^*)=0$, and likewise $d_\alpha(y^*,x^*)=0$.
Hence $(x^*,y^*)\in\bigcap\U\cap\bigcap\Uinv$, so $x^*=y^*$ by $T_0$.
\end{proof}

\begin{corollary}[Picard error bound]\label{cor:picard}
Under the hypotheses of Theorem~\ref{thm:contraction}, for every
$x_0\in X$, every $\alpha\in A$ and every $n\ge 0$,
\[
  d_\alpha(T^n x_0,x^*)\le \frac{k_\alpha^n}{1-k_\alpha}\,d_\alpha(x_0,Tx_0),
  \qquad
  d_\alpha(x^*,T^n x_0)\le \frac{k_\alpha^n}{1-k_\alpha}\,d_\alpha(Tx_0,x_0).
\]
\end{corollary}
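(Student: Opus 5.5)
The plan is to take the two tail estimates already obtained in the existence part of the proof of Theorem~\ref{thm:contraction} and pass to the limit $m\to\infty$. No continuity of $d_\alpha$ is assumed, so the limit passage will go through the triangle inequality. Fix $x_0\in X$ and $\alpha\in A$, write $x_n=T^n x_0$, and let $x^*$ be the fixed point given by Theorem~\ref{thm:contraction}. For all $m>n$ that proof gives
\[
  d_\alpha(x_n,x_m)\le \frac{k_\alpha^n}{1-k_\alpha}\,d_\alpha(x_0,x_1),
  \qquad
  d_\alpha(x_m,x_n)\le \frac{k_\alpha^n}{1-k_\alpha}\,d_\alpha(x_1,x_0).
\]
Since $x_1=Tx_0$, the right-hand sides are exactly the constants appearing in the statement.

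For the first inequality I will use the triangle inequality in the forward direction:
\[
  d_\alpha(x_n,x^*)\le d_\alpha(x_n,x_m)+d_\alpha(x_m,x^*)
  \le \frac{k_\alpha^n}{1-k_\alpha}\,d_\alpha(x_0,Tx_0)+d_\alpha(x_m,x^*)
\]
for every $m>n$. For the second inequality I will split the other way:
\[
  d_\alpha(x^*,x_n)\le d_\alpha(x^*,x_m)+d_\alpha(x_m,x_n)
  \le d_\alpha(x^*,x_m)+\frac{k_\alpha^n}{1-k_\alpha}\,d_\alpha(Tx_0,x_0).
\]
The choice of intermediate point $x_m$ is made so that the Cauchy-tail bound is used in the direction in which it was proved. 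The left-hand sides do not depend on $m$, so it remains to show that $d_\alpha(x_m,x^*)\to0$ and $d_\alpha(x^*,x_m)\to0$ as $m\to\infty$. Letting $m\to\infty$ then yields both bounds.

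The only point requiring care, and the step I expect to be the main obstacle, is justifying these two limits from the convergence $x_m\to x^*$ in $\Us$ asserted in Theorem~\ref{thm:contraction}. I will argue as follows. For each $\varepsilon>0$, the set $U_{d_\alpha,\varepsilon}\cap U_{d_\alpha,\varepsilon}^{-1}=\{(x,y):d_\alpha^s(x,y)<\varepsilon\}$ belongs to $\Us$, because $U_{d_\alpha,\varepsilon}\in\U$ is a subbasic entourage. Convergence in $\Us$ therefore gives an $m_0$ such that $d_\alpha^s(x^*,x_m)<\varepsilon$ for all $m\ge m_0$. Since $d_\alpha^s=\max\{d_\alpha,d_\alpha^{-1}\}$, this controls both $d_\alpha(x^*,x_m)$ and $d_\alpha(x_m,x^*)$ at once. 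With this in hand, both limit passages are immediate.

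The case $n=0$ needs no separate treatment, since the same argument works with the bound $1/(1-k_\alpha)$. The case $d_\alpha(x_0,Tx_0)=0$ is also covered and gives $d_\alpha(T^nx_0,x^*)=0$, consistent with the preorder interpretation.
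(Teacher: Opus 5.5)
Your proposal is correct and follows the paper's proof. Both pass to the limit $m\to\infty$ in the tail estimates from the proof of Theorem~\ref{thm:contraction} via the triangle inequality, using $d_\alpha\le d_\alpha^{s}$ and $d_\alpha^{s}(x_m,x^*)\to 0$ from convergence in $\Us$. Your explicit splitting $d_\alpha(x^*,x_n)\le d_\alpha(x^*,x_m)+d_\alpha(x_m,x_n)$ for the second bound simply spells out what the paper calls ``the same passage to the limit.''
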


\begin{proof}
Set $x_n=T^n x_0$. The Cauchy estimate derived in the proof of
Theorem~\ref{thm:contraction} gives, for all $m>n\ge 0$,
\[
  d_\alpha(x_n,x_m)\le \frac{k_\alpha^n}{1-k_\alpha}\,d_\alpha(x_0,x_1).
\]
By the triangle inequality,
$d_\alpha(x_n,x^*)\le d_\alpha(x_n,x_m)+d_\alpha(x_m,x^*)$
for every $m\ge n$. Since $d_\alpha\le d_\alpha^{s}=\max\left\lbrace d_\alpha,d_\alpha^{-1}\right\rbrace$
pointwise on $X\times X$, and $x_m\to x^*$ in $\Us$ gives
$d_\alpha^{s}(x_m,x^*)\to 0$, we have
$d_\alpha(x_m,x^*)\to 0$ as $m\to\infty$. Taking
$\limsup_{m\to\infty}$ on both sides of the triangle inequality,
$d_\alpha(x_n,x^*)\le k_\alpha^n(1-k_\alpha)^{-1}d_\alpha(x_0,x_1)+0
=k_\alpha^n(1-k_\alpha)^{-1}d_\alpha(x_0,x_1)$.
The second inequality follows from the analogous Cauchy estimate
$d_\alpha(x_m,x_n)\le k_\alpha^n(1-k_\alpha)^{-1}d_\alpha(x_1,x_0)$
by the same passage to the limit $m\to\infty$.
\end{proof}

\begin{example}[Theorem~\ref{thm:contraction} and Corollary~\ref{cor:picard} in action]\label{ex:picard-numerical}
Take $X=\R$ with the upper quasi-pseudometric
$d(x,y)=(x-y)^+$, so the symmetrisation is the Euclidean
uniformity and $(\R,\U_d)$ is $T_0$ and bicomplete. Consider the
affine map $T(x):=\tfrac{x}{2}$. For all $x,y\in\R$,
\[
  d(Tx,Ty)=\left(\tfrac{x-y}{2}\right)^+=\tfrac12\,d(x,y),
  \qquad
  d(Ty,Tx)=\tfrac12\,d(y,x),
\]
so both lines of~\eqref{eq:contraction} hold with $k=\tfrac12$
and $T$ is a $\left\lbrace d \right \rbrace$-contraction. The equation $T(x^*)=x^*$ forces
$x^*=0$, the fixed point produced by Theorem~\ref{thm:contraction}.

\emph{Starting from $x_0=1$ (above the fixed point).} The iterates
are $T^n(1)=2^{-n}$, decreasing monotonically to $0$. The seed
distances are $d(x_0,Tx_0)=d(1,\tfrac12)=\tfrac12$ and
$d(Tx_0,x_0)=d(\tfrac12,1)=0$. Corollary~\ref{cor:picard} gives
\[
  d(T^n(1),0)\le\frac{\left(\tfrac{1}{2}\right)^n}{1-\tfrac{1}{2}}\cdot\tfrac12=\left(\tfrac{1}{2}\right)^n,
  \qquad
  d(0,T^n(1))\le\frac{\left(\tfrac{1}{2}\right)^n}{1-\tfrac{1}{2}}\cdot 0=0,
\]
which match the exact values $d(2^{-n},0)=2^{-n}$ and
$d(0,2^{-n})=0$: both bounds are attained with equality.

\emph{Starting from $x_0=-1$ (below the fixed point).} Now
$T^n(-1)=-2^{-n}$ increases monotonically to $0$. The seed
distances swap roles: $d(x_0,Tx_0)=d(-1,-\tfrac12)=0$ and
$d(Tx_0,x_0)=d(-\tfrac12,-1)=\tfrac12$. The two halves of
Corollary~\ref{cor:picard} now give
\[
  d(T^n(-1),0)\le 0,\qquad d(0,T^n(-1))\le\left(\tfrac{1}{2}\right)^n,
\]
again matching the exact values $d(-2^{-n},0)=0$ and
$d(0,-2^{-n})=2^{-n}$.

The same contraction, run from opposite sides of the fixed point,
exhibits the asymmetric Picard bound on opposite sides: from
above, the forward direction $d$ carries the geometric decay and
the conjugate direction sees nothing; from below, the conjugate
direction carries the decay and the forward direction is trivial.
The two halves of Corollary~\ref{cor:picard} are therefore
genuinely independent statements, each tight in its own regime,
and neither is implied by the other on a general
quasi-pseudometric space.
\end{example}

\begin{example}[A family $\mathcal{D}$ with rates varying across $\alpha$]\label{ex:family-multi-rate}
The hypothesis of Theorem~\ref{thm:contraction} asks for a
\emph{family} $\mathcal{D}=\left\lbrace d_\alpha \right \rbrace_{\alpha\in A}$, with
contraction constants $\left\lbrace k_\alpha \right \rbrace$ allowed to vary across the
index. The next example shows that this freedom is essential: a
single quasi-pseudometric realising $\U$ does not capture the
structure, and the rate genuinely differs across $\alpha$.

Take $X=\R^2$ with the two quasi-pseudometrics
\[
  d_1\left((x_1,x_2),(y_1,y_2)\right):=(x_1-y_1)^+,
  \quad
  d_2\left((x_1,x_2),(y_1,y_2)\right):=(x_2-y_2)^+,
\]
each reading off one coordinate. The symmetrisations
$d_i^s=|x_i-y_i|$ together generate the Euclidean uniformity on
$\R^2$, so $(\R^2,\U_{\mathcal{D}})$ with
$\mathcal{D}=\left\lbrace d_1,d_2 \right \rbrace$ is $T_0$ and bicomplete. Neither $d_i$
alone is $T_0$, since each ignores one coordinate; the two
together are needed.

Define $T(x_1,x_2):=(\tfrac{x_1}{2},\,\tfrac{x_2}{3})$. A direct computation gives
\[
  d_1(Tx,Ty)=\tfrac12\,d_1(x,y),\qquad
  d_2(Tx,Ty)=\tfrac13\,d_2(x,y),
\]
with the analogous conjugate inequalities, so $T$ is a
$\mathcal{D}$-contraction with $k_{d_1}=\tfrac12\ne\tfrac13=k_{d_2}$.
The unique fixed point is $x^*=(0,0)$, and the iterates from
$x_0=(1,1)$ are $T^n(1,1)=(2^{-n},3^{-n})$.
Corollary~\ref{cor:picard} gives the two bounds
\[
  d_1\left(T^n(1,1),x^*\right)\le 2^{-n},\qquad
  d_2\left(T^n(1,1),x^*\right)\le 3^{-n},
\]
each attained with equality. The two coordinates carry genuinely
different geometric rates; the family-indexed estimate records
both simultaneously. A single-quasi-metric formulation forcing one constant $k$ to govern the whole map would have to take
$k=\max\left\lbrace k_{d_1},k_{d_2} \right \rbrace=\tfrac12$, losing the strictly faster
decay $3^{-n}$ in the second coordinate. The convergence in
$\Us$ inherits the slower of the two rates, but the per-$\alpha$
estimates of Corollary~\ref{cor:picard} are sharper.
\end{example}

\begin{remark}\label{rem:vs-2.1}
Theorem~\ref{thm:contraction} attaches to the entourage framework
of~\cite{Gaba2018}, as recast in
Propositions~\ref{prop:triv-single} and~\ref{prop:triv-multi}, an
existence statement whose two structural ingredients can be checked
directly from the data $(X,\U,\mathcal{D},T)$: a contraction
property of $T$ relative to a generating family $\mathcal{D}$, and
bicompleteness of $(X,\U)$.
\end{remark}

\begin{remark}[Relation to existing contraction principles in asymmetric settings]\label{rem:vs-prior}
Contraction principles for asymmetric distance structures form a
well-developed literature; the modern reference is the
monograph~\cite{Cobzas}, and the quasi-uniform survey~\cite{Kunzi}
collects the underlying machinery. Theorem~\ref{thm:contraction}
differs from the standard quasi-metric Banach contractions in two
respects. First, it works directly with a generating family
$\mathcal{D}$ of quasi-pseudometrics, with constants $k_\alpha$
that may vary with $\alpha$ (Example~\ref{ex:product}); the
conclusion is therefore a fixed-point statement at the level of the
quasi-uniformity $\U$, rather than in a single quasi-metric
realising it. Second, the two-sided
form~\eqref{eq:contraction} is automatically invariant under
$\mathcal{D}\mapsto\mathcal{D}^{-1}$ (Remark~\ref{rem:sharpness}),
which feeds the endpoint conjugate of
Section~\ref{sec:endpoint} without further hypotheses.
\end{remark}

\begin{remark}[Where the substantive content of Theorem~\ref{thm:contraction} lives]\label{rem:reduction-single-valued}
The two-sided condition~\eqref{eq:contraction} implies, by
$d_\alpha^s=\max\left\lbrace d_\alpha,d_\alpha^{-1} \right \rbrace$,
$d_\alpha^s(Tx,Ty)\le k_\alpha\,d_\alpha^s(x,y)$ for every
$\alpha\in A$ and every $x,y\in X$. Hence $T$ is a Banach
contraction on the $T_0$ complete uniform space $(X,\Us)$
generated by $\left\lbrace d_\alpha^s \right \rbrace_{\alpha\in A}$, and the
\emph{existence} and \emph{uniqueness} of the fixed point in
Theorem~\ref{thm:contraction} follow from the standard uniform-space
Banach principle applied to $\Us$. The substantive
quasi-uniform content of the single-valued theorem therefore lives
elsewhere: in the per-direction Picard bound of
Corollary~\ref{cor:picard}, which the symmetric estimate on $\Us$
fuses into a single symmetric bound and loses; in the
conjugate-invariance of~\eqref{eq:contraction} that makes the
endpoint dualisation of Section~\ref{sec:endpoint} automatic; and
in the family-of-quasi-pseudometrics form with constants $k_\alpha$
varying across $\alpha$ (Examples~\ref{ex:family-multi-rate}
and~\ref{ex:product}), which the symmetric estimate on $\Us$
collapses to the slowest rate $\max_\alpha k_\alpha$. The
multivalued analogue in
Definition~\ref{def:weakly-contractive-multi} does not admit the
same reduction: the existential witness $\eta\in T\xi$ realising
the forward inequality of the definition need not realise the
backward one at the same pair, and the swap-closure that trivialises
the single-valued case fails. This asymmetry between the
single-valued and multivalued theorems is why the substantive
novelty of the paper accumulates on the multivalued side
(Theorem~\ref{thm:hausdorff-direct}) and on the direction-tracking
per-$\alpha$ bookkeeping.
\end{remark}

\begin{remark}[Conjugate symmetry of \eqref{eq:contraction}]\label{rem:sharpness}
For the single-valued condition~\eqref{eq:contraction}, the two
lines are not independent. Quantifying the first line
$d_\alpha(Tx,Ty)\le k_\alpha\,d_\alpha(x,y)$ over all
$(x,y)\in X\times X$ and then swapping $(x,y)\leftrightarrow(y,x)$
gives $d_\alpha(Ty,Tx)\le k_\alpha\,d_\alpha(y,x)$, i.e.\ the
second line. Conversely the second line implies the first. Both
lines hold over $X\times X$ if and only if either one does, so
the second line of~\eqref{eq:contraction} is logically
redundant. We retain it for two reasons. First, the two-line
form makes the proof of Theorem~\ref{thm:contraction} read in
parallel for $d_\alpha$ and $d_\alpha^{-1}$ at every step,
which is the natural bookkeeping for the asymmetric setting and
clarifies which estimate feeds which Cauchy bound. Second, the
analogous condition for multivalued maps in
Definition~\ref{def:weakly-contractive-multi} is genuinely
two-sided. There the existential witness $\eta\in T\xi$ in the
first line need not work for the second line at the same pair,
and the swap-closure argument fails, so the two-line shape
provides a uniform template across the single-valued and
multivalued settings. The conjugate-invariance of
$\mathcal{D}\mapsto\mathcal{D}^{-1}$ that drives
Section~\ref{sec:endpoint} is therefore automatic in the
single-valued case and is the substantive content of the
formulation in the multivalued case.
\end{remark}

\begin{remark}[Mixing of forward and conjugate at the iterate level]\label{rem:iterate-coupling}
For a $\mathcal{D}$-contraction $T$, applying the first line
of~\eqref{eq:contraction} at the iterate pair $(x_n,x_{n+1})$
gives $d_\alpha(x_{n+1},x_{n+2})\le k_\alpha\,d_\alpha(x_n,x_{n+1})$,
while applying it at $(x_{n+1},x_n)$, which is the same inequality
by Remark~\ref{rem:sharpness}, gives
$d_\alpha(x_{n+2},x_{n+1})\le k_\alpha\,d_\alpha(x_{n+1},x_n)$.
The two iterated estimates control $d_\alpha$ and
$d_\alpha^{-1}$ respectively along the iteration and together
deliver bi-Cauchyness in $\Us$. $\Us$-continuity of $T$ then
follows from non-expansiveness in each $d_\alpha^s$ via the
asymmetric triangle inequality.
\end{remark}

The next two examples illustrate the role played by the
completeness and strict-contraction hypotheses of
Theorem~\ref{thm:contraction}: dropping either invalidates the
conclusion.

\begin{example}[Failure without bicompleteness]\label{ex:non-bicomplete}
\sloppy
Take $X=(0,1]$ with the upper quasi-pseudometric
$d(x,y)=(x-y)^+$. The symmetrisation is the standard Euclidean
metric, in which $(0,1]$ fails to be closed: the bi-Cauchy
sequence $x_n=\tfrac{1}{n}$ has no limit in $X$. So $(X,\U_d)$ is $T_0$
but not bicomplete. The map $T(x):=\tfrac{x}{2}$ is monotone and
$\tfrac12$-Lipschitz on $\R$, hence a $\left\lbrace d \right \rbrace$-contraction with
constant $\tfrac12$ (Example~\ref{ex:asymmetric-norm}). The
iterates from any $x_0\in(0,1]$ are $T^n(x_0)=\tfrac{x_0}{2^n}$, tending
to $0$ in $\Us$; since $0\notin X$, $T$ has no fixed point in $X$. So
every hypothesis of Theorem~\ref{thm:contraction} other than
bicompleteness is satisfied, yet the conclusion fails: the
candidate limit escapes the space.
\end{example}

\begin{example}[Strict contraction is essential]\label{ex:non-strict}
On $X=\R$ with the upper quasi-pseudometric $d(x,y)=(x-y)^+$, the
translation $T(x):=x+1$ satisfies $d(Tx,Ty)=d(x,y)$ and
$d(Ty,Tx)=d(y,x)$ for every $x,y\in\R$, so both lines
of~\eqref{eq:contraction} hold with constant $k=1$. The map is
non-expansive in $d$ and in $d^{-1}$ but has no fixed point: the
iterates $T^n(x_0)=x_0+n$ diverge to $+\infty$ in $\Us$, and
$x+1=x$ has no solution in $\R$. This shows that the hypothesis
$\left\lbrace k_\alpha \right \rbrace\subseteq[0,1)$ in Theorem~\ref{thm:contraction}
cannot be relaxed to $\left\lbrace k_\alpha \right \rbrace\subseteq[0,1]$: the strict
inequality $k_\alpha<1$ is what drives the geometric decay of
consecutive distances and, with it, bi-Cauchyness of the
iterates.
\end{example}

\subsection{A Boyd--Wong-type extension}\label{subsec:boyd-wong}

Boyd and Wong~\cite{BoydWong} weakened the Banach contraction
condition on a metric space by replacing the constant $k$ with a
function $\varphi$ that compresses positive distances; the
Matkowski class~\cite{Matkowski} gives the more general
framework in which $\varphi$ need only be nondecreasing and
satisfy $\varphi^n(t)\to 0$. The same substitution makes sense in
our setting: replace each $k_\alpha$ in~\eqref{eq:contraction} by a
comparison function $\varphi_\alpha$ satisfying the standard
Boyd--Wong conditions. The conclusion of
Theorem~\ref{thm:contraction} survives, with one cost: the
geometric Picard estimate of Corollary~\ref{cor:picard} weakens to
a non-geometric rate determined by $\varphi_\alpha$.

\begin{definition}\label{def:boyd-wong}
A self-map $T\colon X\to X$ is a \emph{$\mathcal{D}$-Boyd--Wong
contraction} if there is a family $\left\lbrace \varphi_\alpha:\alpha\in A \right \rbrace$
of functions $\varphi_\alpha\colon[0,\infty)\to[0,\infty)$ such
that each $\varphi_\alpha$ is nondecreasing, right upper
semicontinuous (that is, $\limsup_{s\to t^+}\varphi_\alpha(s)\le\varphi_\alpha(t)$
for every $t\ge 0$), $\varphi_\alpha(0)=0$, and $\varphi_\alpha(t)<t$
for every $t>0$, and such that, for every $\alpha\in A$ and every
$x,y\in X$,
\begin{equation}\label{eq:bw-contraction}
  d_\alpha(Tx,Ty)\le \varphi_\alpha\left(d_\alpha(x,y)\right)
  \quad\text{and}\quad
  d_\alpha(Ty,Tx)\le \varphi_\alpha\left(d_\alpha(y,x)\right).
\end{equation}
\end{definition}

A $\mathcal{D}$-contraction with constants $k_\alpha$ is the
special case $\varphi_\alpha(t)=k_\alpha t$. The Boyd--Wong class
also covers maps whose contraction rate degrades near the fixed
point, such as $\varphi(t)=t-t^2$ on $[0,\tfrac12]$, where the
ratio $\varphi(t)/t$ tends to $1$ as $t\to 0$. A different
weakening of the linear case is the $(c)^*$-comparison condition
of~\cite{GabaKarapinarPetruselRadenovic2020}, which replaces the
right-USC hypothesis on $\varphi_\alpha$ by the summability
implication $\sum\gamma(t_n)<\infty\Rightarrow\sum t_n<\infty$;
both families coincide with the Banach regime when
$\gamma(t)=kt$, but neither contains the other in general.

\begin{theorem}[Boyd--Wong-type contraction principle]\label{thm:boyd-wong}
Let $(X,\U)$ be a $T_0$ bicomplete quasi-uniform space generated by
a family $\mathcal{D}=\left\lbrace d_\alpha \right \rbrace_{\alpha\in A}$ of
quasi-pseudometrics, and let $T\colon X\to X$ be a
$\mathcal{D}$-Boyd--Wong contraction. Then $T$ has a unique fixed
point $x^*\in X$, and for every $x_0\in X$ the iterates $T^n x_0$
converge to $x^*$ in $\Us$.
\end{theorem}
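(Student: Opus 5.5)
The plan is to reproduce the structure of the proof of Theorem~\ref{thm:contraction}, replacing the geometric-series Cauchy estimate with the classical Boyd--Wong argument, carried out for each $d_\alpha$ and each $d_\alpha^{-1}$ separately. Fix $x_0\in X$, set $x_n=T^nx_0$, and for fixed $\alpha$ put $a_n:=d_\alpha(x_n,x_{n+1})$ and $b_n:=d_\alpha(x_{n+1},x_n)$. By~\eqref{eq:bw-contraction} we have $a_{n+1}\le\varphi_\alpha(a_n)\le a_n$, with the first inequality strict whenever $a_n>0$. Hence $(a_n)$ is nonincreasing and converges to some $a\ge0$. If $a>0$, then $a_n\downarrow a$ from above, and right upper semicontinuity gives $a\le\limsup\varphi_\alpha(a_n)\le\varphi_\alpha(a)<a$, which is a contradiction. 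So $a_n\to0$, and the same argument gives $b_n\to0$.

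The main obstacle is the Cauchy step, because no summable bound is available. I would argue by contradiction, in the standard Boyd--Wong way, separately for $d_\alpha$ and for $d_\alpha^{-1}$. Suppose $(x_n)$ is not $d_\alpha$-Cauchy in the forward sense. Then there are $\varepsilon>0$ and indices $m_k>n_k\ge k$ with $d_\alpha(x_{n_k},x_{m_k})\ge\varepsilon$, where $m_k$ is chosen minimal, so that $d_\alpha(x_{n_k},x_{m_k-1})<\varepsilon$. The triangle inequality then gives $\varepsilon\le c_k:=d_\alpha(x_{n_k},x_{m_k})<\varepsilon+a_{m_k-1}$, so $c_k\to\varepsilon$ from above or equal. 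On the other hand,
\[
c_k\le a_{n_k}+d_\alpha(x_{n_k+1},x_{m_k+1})+d_\alpha(x_{m_k+1},x_{m_k})
\le a_{n_k}+\varphi_\alpha(c_k)+b_{m_k}.
\]
This is the one place where both $a_n\to0$ and $b_n\to0$ are needed: the last term runs backwards, so the forward Cauchy argument already consumes the conjugate estimate. Taking $\limsup$ and using right upper semicontinuity at $\varepsilon$, together with $c_k\ge\varepsilon$ and monotonicity of $\varphi_\alpha$, gives $\varepsilon\le\varphi_\alpha(\varepsilon)<\varepsilon$, a contradiction. The conjugate direction is handled by the same computation with the roles of $a_n$ and $b_n$ exchanged, applied to the second line of~\eqref{eq:bw-contraction}. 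Alternatively, one can note that $(X,\Uinv)$ with $\mathcal{D}^{-1}$ satisfies the same hypotheses. Together these show that $(x_n)$ is bi-Cauchy, and bicompleteness yields $x_n\to x^*$ in $\Us$.

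Since $\varphi_\alpha(t)\le t$, the map $T$ is non-expansive in each $d_\alpha$ and each $d_\alpha^{-1}$, hence in each $d_\alpha^s$. It is therefore uniformly continuous on $(X,\Us)$, exactly as in the proof of Theorem~\ref{thm:contraction}. It follows that $x_{n+1}=Tx_n\to Tx^*$, and the Hausdorff property of $\Us$ (from $T_0$) gives $Tx^*=x^*$.

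For uniqueness, suppose $Tx^*=x^*$ and $Ty^*=y^*$, and let $t=d_\alpha(x^*,y^*)$. Then $t\le\varphi_\alpha(t)$, which forces $t=0$; likewise $d_\alpha(y^*,x^*)=0$ for every $\alpha$. Hence $(x^*,y^*)\in\bigcap\U\cap\bigcap\Uinv$, and $T_0$ gives $x^*=y^*$. Finally, because the fixed point is unique, the iterates from every starting point $x_0$ converge to this same $x^*$ in $\Us$.
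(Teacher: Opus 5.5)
Your proposal is correct and follows essentially the same route as the paper: decay of $a_n,b_n$ via right upper semicontinuity, the Boyd--Wong minimal-index contradiction (which uses both $a_n\to0$ and $b_n\to0$ and is then transferred to $d_\alpha^{-1}$), uniform continuity on $(X,\Us)$ together with the Hausdorff property to get the fixed point, and $T_0$ for uniqueness. One cosmetic slip: in your chain $a_{n+1}\le\varphi_\alpha(a_n)\le a_n$ it is the second inequality, not the first, that is strict when $a_n>0$; also, your minimality choice gives $d_\alpha(x_{n_k},x_{m_k-1})<\varepsilon$ trivially when $m_k-1=n_k$, so you do not need the paper's separate check that $m_k\ge n_k+2$.
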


\begin{proof}
\emph{Step 1: Decay of consecutive distances.} Fix $\alpha\in A$
and $x_0\in X$, and set $a_n:=d_\alpha(x_n,x_{n+1})$ and
$b_n:=d_\alpha(x_{n+1},x_n)$ with $x_n=T^nx_0$. Applying the first
line of~\eqref{eq:bw-contraction} to the pair $(x_{n-1},x_n)$,
\[
  a_n=d_\alpha(Tx_{n-1},Tx_n)\le\varphi_\alpha\left(d_\alpha(x_{n-1},x_n)\right)=\varphi_\alpha(a_{n-1}),
\]
and applying the second line to the same pair gives
$b_n\le\varphi_\alpha(b_{n-1})$. Since $\varphi_\alpha(t)<t$ for
$t>0$, both $(a_n)$ and $(b_n)$ are nonincreasing and converge to
some limits $\ell,\ell'\ge 0$. Right upper semicontinuity of
$\varphi_\alpha$ at $\ell$ gives $\ell\le\varphi_\alpha(\ell)$,
forcing $\ell=0$; the analogous argument for $b_n$ gives
$\ell'=0$. Hence $a_n\to 0$ and $b_n\to 0$.

\emph{Step 2: Bi-Cauchyness of the iterates.} Suppose for
contradiction that $(x_n)$ is not Cauchy in $d_\alpha$. Then there
exist $\epsilon>0$ and an infinite family of pairs of indices
whose first coordinates tend to $\infty$ realising
$d_\alpha(\cdot,\cdot)\ge\epsilon$; from it extract a strictly
increasing sequence $n_k\to\infty$ such that for each $k$ some
$m>n_k$ satisfies $d_\alpha(x_{n_k},x_m)\ge\epsilon$. For each $k$
pick $m_k$ minimal with this property, well-defined by the
well-ordering of $\N$, so that
$d_\alpha(x_{n_k},x_j)<\epsilon$ for $n_k<j<m_k$. For all
sufficiently large $k$ we have $m_k\ge n_k+2$, since otherwise
$d_\alpha(x_{n_k},x_{m_k})=a_{n_k}\to 0$ along a subsequence by
Step~1, contradicting the lower bound $\epsilon$. The triangle
inequality gives
\begin{align*}
  \epsilon
  &\le d_\alpha(x_{n_k},x_{m_k})\\
  &\le d_\alpha(x_{n_k},x_{m_k-1})+a_{m_k-1}\\
  &<\epsilon+a_{m_k-1},
\end{align*}
\sloppy
The lower bound is by the minimal choice of $m_k$. The upper
bound combines $d_\alpha(x_{n_k},x_{m_k-1})<\epsilon$, which
holds by the same minimality (since $n_k<m_k-1<m_k$), with the
triangle inequality at $x_{m_k-1}$. Since $a_{m_k-1}\to 0$ by
Step~1, the sandwich forces
$d_\alpha(x_{n_k},x_{m_k})\to\epsilon$, with approach through
values $\ge\epsilon$. The right upper semicontinuity of
$\varphi_\alpha$ at $\epsilon$ applies precisely to sequences
converging to $\epsilon$ from above.
\fussy

On the other hand, the triangle inequality applied at
$x_{n_k+1}$ and then at $x_{m_k+1}$ gives
\begin{align*}
  d_\alpha(x_{n_k},x_{m_k})
  &\le d_\alpha(x_{n_k},x_{n_k+1})+d_\alpha(x_{n_k+1},x_{m_k+1})+d_\alpha(x_{m_k+1},x_{m_k})\\
  &=a_{n_k}+d_\alpha(x_{n_k+1},x_{m_k+1})+b_{m_k},
\end{align*}
where $a_{n_k}=d_\alpha(x_{n_k},x_{n_k+1})$ and $b_{m_k}=d_\alpha(x_{m_k+1},x_{m_k})$.
Combined with $a_{n_k},b_{m_k}\to 0$ from Step~1,
\[
  \liminf_{k\to\infty}d_\alpha(x_{n_k+1},x_{m_k+1})\ge\epsilon.
\]
The first line of~\eqref{eq:bw-contraction}, applied to
$(x_{n_k},x_{m_k})$, gives
$d_\alpha(x_{n_k+1},x_{m_k+1})\le\varphi_\alpha(d_\alpha(x_{n_k},x_{m_k}))$,
and right upper semicontinuity of $\varphi_\alpha$ at $\epsilon$,
applied to the sequence $d_\alpha(x_{n_k},x_{m_k})$ approaching
$\epsilon$ from above, yields
\[
  \limsup_{k\to\infty}d_\alpha(x_{n_k+1},x_{m_k+1})
  \le\varphi_\alpha(\epsilon)<\epsilon.
\]
This contradicts the previous $\liminf$. Hence $(x_n)$ is Cauchy
in $d_\alpha$. The second line of~\eqref{eq:bw-contraction} is
exactly the statement
$d_\alpha^{-1}(Tx,Ty)\le\varphi_\alpha(d_\alpha^{-1}(x,y))$,
i.e.\ $T$ is also a Boyd--Wong $\varphi_\alpha$-contraction with
respect to the conjugate quasi-pseudometric $d_\alpha^{-1}$; the
same Step~2 argument applied to $d_\alpha^{-1}$ shows that
$(x_n)$ is Cauchy in $d_\alpha^{-1}$. Therefore $(x_n)$ is
bi-Cauchy, and by
bicompleteness $x_n\to x^*$ in $\Us$ for some $x^*\in X$.

\emph{Step 3: $x^*$ is a fixed point.} The condition
$\varphi_\alpha(t)\le t$ together with~\eqref{eq:bw-contraction}
makes $T$ non-expansive in every $d_\alpha$ and in the conjugate
$d_\alpha^{-1}$. The map $T$ is therefore uniformly continuous on
$(X,\Us)$, so $Tx_n\to Tx^*$ in $\Us$. Combined with
$Tx_n=x_{n+1}\to x^*$ and the Hausdorff property of $\Us$ (which
holds because $\U$ is $T_0$), we obtain $Tx^*=x^*$.

\emph{Step 4: Uniqueness.} Suppose $Ty^*=y^*$ for some $y^*\in X$.
Write $t:=d_\alpha(x^*,y^*)$. Applying~\eqref{eq:bw-contraction}
to the pair $(x^*,y^*)$,
\[
  t=d_\alpha(Tx^*,Ty^*)\le\varphi_\alpha(t).
\]
If $t>0$ then $\varphi_\alpha(t)<t$, contradicting the
inequality. Hence $d_\alpha(x^*,y^*)=0$ for
every $\alpha\in A$, and analogously $d_\alpha(y^*,x^*)=0$ for
every $\alpha$. Therefore $(x^*,y^*),(y^*,x^*)\in\bigcap\U$, and
$T_0$ of $\U$ gives $x^*=y^*$.
\end{proof}

\begin{remark}\label{rem:bw-rate}
The Picard error bound of Corollary~\ref{cor:picard} has no direct
geometric analogue under Theorem~\ref{thm:boyd-wong}:
$d_\alpha(T^n x_0,x^*)\to 0$ holds, but the rate is governed by
the iterates $\varphi_\alpha^n(d_\alpha(x_0,Tx_0))$, which need
not be summable.
\end{remark}

\begin{remark}[Reduction of Theorem~\ref{thm:boyd-wong} to Boyd--Wong on $\Us$]\label{rem:reduction-boyd-wong}
The reduction observed in Remark~\ref{rem:reduction-single-valued}
propagates to the Boyd--Wong case. By monotonicity of
$\varphi_\alpha$ and $d_\alpha,d_\alpha^{-1}\le d_\alpha^s$,
\[
  d_\alpha^s(Tx,Ty)
  =\max\left\lbrace d_\alpha(Tx,Ty),d_\alpha(Ty,Tx)\right \rbrace
  \le\varphi_\alpha\left(d_\alpha^s(x,y)\right),
\]
so $T$ is a Boyd--Wong $\varphi_\alpha$-contraction on the $T_0$
complete uniform space $(X,\Us)$, and the existence and uniqueness
clauses of Theorem~\ref{thm:boyd-wong} follow from the standard
Boyd--Wong theorem on $\Us$. The genuine quasi-uniform content of
Theorem~\ref{thm:boyd-wong} lives in the per-direction bookkeeping
of the mixed-rate example (Example~\ref{ex:bw-family}) and in the
conjugate-invariant form of Definition~\ref{def:boyd-wong} that
feeds the endpoint corollary
(Corollary~\ref{cor:boyd-wong-endpoint}); the multivalued
Definition~\ref{def:weakly-contractive-multi} again does not reduce
symmetrically, and it is on that side of the paper that the
family-and-two-sided formalism is essential rather than convenient.
\end{remark}

\begin{remark}[A converse direction]\label{rem:converse}
One can ask the question in reverse: on which quasi-uniform spaces
does every $\mathcal{D}$-contraction have a fixed point? The
metric-space analogue has been studied since Bessaga's converse
to the Banach principle~\cite{Bessaga} and refined in several
directions; in each version the class of test maps characterises
a corresponding completeness notion. A quasi-uniform analogue should likewise characterise
bicompleteness relative to $\mathcal{D}$, but a precise statement
requires care about which test class to use, and we do not pursue
the question here.
\end{remark}

\begin{example}[A Boyd--Wong contraction with sub-geometric rate]\label{ex:boyd-wong-logistic}
Take $X=[0,\tfrac12]$ with the upper quasi-pseudometric
$d(x,y)=(x-y)^+$ of Example~\ref{ex:asymmetric-norm}, restricted
to $[0,\tfrac12]$. The symmetrisation is the Euclidean metric, in
which $[0,\tfrac12]$ is closed in $\R$, so $(X,\U_d)$ is $T_0$
and bicomplete. Define $T(x):=x-x^2=x(1-x)$; the image lies in
$[0,\tfrac14]\subseteq[0,\tfrac12]$, so $T$ is well-defined and
monotone nondecreasing on $X$. Take
\[
  \varphi(t):=\begin{cases} t-t^2 & t\in[0,\tfrac12],\\
                            \tfrac14 & t\ge\tfrac12,\end{cases}
\]
which is nondecreasing, continuous (hence right upper
semicontinuous), $\varphi(0)=0$, and $\varphi(t)<t$ for $t>0$.
For $x\ge y$ in $[0,\tfrac12]$ the factorisation
$Tx-Ty=(x-y)\left(1-(x+y)\right)$ together with
$x+y\ge x-y$ (equivalent to $y\ge 0$) gives
\[
  d(Tx,Ty)=(x-y)\left(1-(x+y)\right)\le(x-y)\left(1-(x-y)\right)=\varphi\left(d(x,y)\right),
\]
and the conjugate inequality
$d(Ty,Tx)\le\varphi(d(y,x))$ follows from the same factorisation
with the arguments swapped. For $x<y$ both sides vanish. So $T$
is a $\mathcal{D}$-Boyd--Wong contraction with $\mathcal{D}=\left\lbrace d \right \rbrace$.

The fixed-point equation $x^*-(x^*)^2=x^*$ forces $x^*=0$, so
Theorem~\ref{thm:boyd-wong} gives the Picard iterates
$x_{n+1}=x_n(1-x_n)$ converging to $0$ from every $x_0\in X$. The
rate, however, is sub-geometric: the standard analysis of the
logistic recursion at parameter~$1$ gives $x_n\sim \tfrac{1}{n}$, so
$x_{n+1}/x_n\to 1$ and no constant $k<1$ controls $d(T^n x_0,0)$.
The example exhibits the rate behaviour announced in
Remark~\ref{rem:bw-rate}: bi-Cauchyness and convergence hold, but
the geometric Picard bound of Corollary~\ref{cor:picard} no
longer applies.
\end{example}

\begin{example}[Family $\mathcal{D}$ with mixed Boyd--Wong / Banach rates]\label{ex:bw-family}
The family aspect of Theorem~\ref{thm:boyd-wong} parallels the
one for Theorem~\ref{thm:contraction}: the comparison functions
$\varphi_\alpha$ may differ across $\alpha\in A$, and a single
$\varphi$ does not capture the structure. We exhibit an instance
where one coordinate satisfies a genuine Boyd--Wong inequality
with sub-geometric rate and the other a Banach inequality with
geometric rate.

Take $X=[0,\tfrac12]^2$ with the per-coordinate upper
quasi-pseudometrics
\[
  d_1\left((x_1,x_2),(y_1,y_2)\right):=(x_1-y_1)^+,
  \qquad
  d_2\left((x_1,x_2),(y_1,y_2)\right):=(x_2-y_2)^+,
\]
generating a $T_0$ bicomplete quasi-uniformity (the
symmetrisations together give the Euclidean uniformity on
$[0,\tfrac12]^2$, a compact, hence complete, space). Define
\[
  T(x_1,x_2):=\left(x_1(1-x_1),\,\frac{x_2}{2}\right);
\]
both components map $[0,\tfrac12]$ into itself, so $T$ is
well-defined.

Pair $d_1$ with the logistic comparison function
$\varphi_1(t):=\min\left\lbrace t-t^2,\tfrac14 \right \rbrace$ of
Example~\ref{ex:boyd-wong-logistic}, and $d_2$ with the linear
$\varphi_2(t):=\tfrac{t}{2}$. Then
\[
  d_1(Tx,Ty)\le\varphi_1\left(d_1(x,y)\right),
  \quad
  d_2(Tx,Ty)=\tfrac12\,d_2(x,y)=\varphi_2\left(d_2(x,y)\right),
\]
the first by the factorisation of
Example~\ref{ex:boyd-wong-logistic}, the second by a direct
computation; the conjugate inequalities follow by the same
arguments with swapped arguments. So $T$ is a
$\mathcal{D}$-Boyd--Wong contraction with comparison family
$\left\lbrace \varphi_1,\varphi_2 \right \rbrace$. The special form $\varphi_2(t)=k\,t$
recovers the Banach regime on the second coordinate, and
Theorem~\ref{thm:boyd-wong} accommodates it as the linear case
of Definition~\ref{def:boyd-wong}.

The unique fixed point is $x^*=(0,0)$, and the iterates from
$x_0=(\tfrac12,\tfrac12)$ have the form
$T^n(\tfrac12,\tfrac12)=\left(a_n,\,2^{-(n+1)}\right)$, where the
first-coordinate sequence $a_{n+1}=a_n(1-a_n)$ with
$a_0=\tfrac12$ decays sub-geometrically as $a_n\sim \tfrac{1}{n}$. The
two coordinates exhibit qualitatively different rates---geometric
$2^{-n}$ in the second, sub-geometric $\tfrac{1}{n}$ in the first---both
delivered by Theorem~\ref{thm:boyd-wong}, with convergence in
$\Us$ inheriting the slower of the two. A single comparison
function $\varphi$ governing both coordinates would have to
satisfy $\varphi\ge\varphi_1$, losing the geometric Banach rate
in $d_2$.
\end{example}

\subsection{Stability of the fixed point under perturbations}\label{subsec:stability}

The contraction theorem is well-behaved under small perturbations
of the data, in the following sense.

\begin{proposition}\label{prop:stability}
Let $(X,\U)$ be a $T_0$ bicomplete quasi-uniform space generated
by $\mathcal{D}=\left\lbrace d_\alpha\right\rbrace$, and let
$T,T'\colon X\to X$ be $\mathcal{D}$-contractions with common
contraction constants
$\left\lbrace k_\alpha\right\rbrace\subseteq[0,1)$ and unique
fixed points $x^*,y^*\in X$. Then for every $\alpha\in A$,
\[
  d_\alpha(x^*,y^*)\le\frac{1}{1-k_\alpha}\sup_{x\in X}d_\alpha(Tx,T'x),
\]
and the conjugate counterpart
$d_\alpha(y^*,x^*)\le(1-k_\alpha)^{-1}\sup_{x\in X}d_\alpha(T'x,Tx)$
holds. In particular, if $\varepsilon:=\sup_{x\in X}d_\alpha^s(Tx,T'x)<\infty$
then $d_\alpha^s(x^*,y^*)\le\varepsilon/(1-k_\alpha)$. When the
supremum on the right is infinite the bound is vacuous but
correct; non-trivial content of the proposition requires the
perturbation $T'$ of $T$ to be uniformly bounded in
$d_\alpha^s$.
\end{proposition}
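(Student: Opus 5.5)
The plan is the standard one-line Banach stability argument, run separately in each direction of $d_\alpha$. Since $x^*=Tx^*$ and $y^*=T'y^*$, I will insert the intermediate point $Ty^*$ and apply the triangle inequality for the quasi-pseudometric $d_\alpha$:
\[
  d_\alpha(x^*,y^*)=d_\alpha(Tx^*,T'y^*)\le d_\alpha(Tx^*,Ty^*)+d_\alpha(Ty^*,T'y^*).
\]
Only the order of the two legs matters here, because $d_\alpha$ is not symmetric. The first leg is bounded by the first line of \eqref{eq:contraction}, giving $d_\alpha(Tx^*,Ty^*)\le k_\alpha\,d_\alpha(x^*,y^*)$. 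The second leg is bounded by $\varepsilon_\alpha:=\sup_{x\in X}d_\alpha(Tx,T'x)$, evaluated at $x=y^*$. This yields
\[
  (1-k_\alpha)\,d_\alpha(x^*,y^*)\le\varepsilon_\alpha.
\]

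The one point needing care is rearranging this inequality. Subtracting $k_\alpha d_\alpha(x^*,y^*)$ is legitimate because $d_\alpha(x^*,y^*)$ is a finite real number, as $d_\alpha$ takes values in $[0,\infty)$. If $\varepsilon_\alpha=\infty$ the claimed bound is vacuous and nothing needs proving. Dividing by $1-k_\alpha>0$ then gives the first inequality.

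For the conjugate counterpart, I would run the same computation in the reversed order:
\[
  d_\alpha(y^*,x^*)=d_\alpha(T'y^*,Tx^*)\le d_\alpha(T'y^*,Ty^*)+d_\alpha(Ty^*,Tx^*).
\]
The second leg is controlled by the second line of \eqref{eq:contraction}, giving $d_\alpha(Ty^*,Tx^*)\le k_\alpha\,d_\alpha(y^*,x^*)$. The first leg is at most $\sup_x d_\alpha(T'x,Tx)$. Rearranging as before gives the second inequality.

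The symmetric consequence follows by taking the maximum of the two bounds. Both one-sided suprema are at most $\varepsilon=\sup_x d_\alpha^s(Tx,T'x)$, so $d_\alpha^s(x^*,y^*)\le\varepsilon/(1-k_\alpha)$.

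There is no real obstacle. The only subtle points are choosing the intermediate point so that each leg has the orientation the contraction and perturbation bounds require, and the finiteness remark above. Note also that the argument uses only the fixed-point equations, not uniqueness or bicompleteness. Those hypotheses serve only to guarantee that $x^*,y^*$ exist, via Theorem~\ref{thm:contraction}.
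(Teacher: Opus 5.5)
Your proposal is correct and follows the paper's proof: the forward estimate is the same (insert $Ty^*$, apply the first line of \eqref{eq:contraction}, rearrange), and the symmetric bound is obtained by the same max argument. The only difference is in the conjugate direction, where the paper inserts $T'x^*$ and uses the contraction property of $T'$, whereas you insert $Ty^*$ again and use the second line of \eqref{eq:contraction} for $T$; your choice shows that only $T$ needs to be a $\mathcal{D}$-contraction, with $T'$ merely required to have the fixed point $y^*$.
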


\begin{proof}
Fix $\alpha$. Since $Tx^*=x^*$ and $T'y^*=y^*$,
\[
  d_\alpha(x^*,y^*)=d_\alpha(Tx^*,T'y^*).
\]
The triangle inequality applied at $Ty^*$ and the contraction
property of $T$ give
\begin{align*}
  d_\alpha(Tx^*,T'y^*)
  &\le d_\alpha(Tx^*,Ty^*)+d_\alpha(Ty^*,T'y^*)\\
  &\le k_\alpha\,d_\alpha(x^*,y^*)+\sup_{x\in X}d_\alpha(Tx,T'x).
\end{align*}
Writing $\varepsilon_\alpha:=\sup_{x\in X}d_\alpha(Tx,T'x)$ and
rearranging,
\[
  (1-k_\alpha)\,d_\alpha(x^*,y^*)\le\varepsilon_\alpha,
\]
which gives the stated bound for $d_\alpha(x^*,y^*)$. For the
conjugate direction, the same argument with the roles of $T$ and
$T'$ exchanged yields
\[
  d_\alpha(y^*,x^*)
  \le d_\alpha(T'y^*,T'x^*)+d_\alpha(T'x^*,Tx^*)
  \le k_\alpha\,d_\alpha(y^*,x^*)+\sup_{x\in X}d_\alpha(T'x,Tx),
\]
hence
\[
  d_\alpha(y^*,x^*)\le(1-k_\alpha)^{-1}\sup_{x\in X}d_\alpha(T'x,Tx).
\]
The forward and conjugate bounds use the quantities
\[
  \sup_{x\in X}d_\alpha(Tx,T'x)
  \quad\text{and}\quad
  \sup_{x\in X}d_\alpha(T'x,Tx)
\]
respectively; in the asymmetric setting these are in general
distinct. If $\varepsilon:=\sup_x d_\alpha^s(Tx,T'x)$ then both
are bounded by $\varepsilon$, and the symmetric bound
\[
  d_\alpha^s(x^*,y^*)=\max\left\lbrace d_\alpha(x^*,y^*),d_\alpha(y^*,x^*) \right \rbrace\le\varepsilon/(1-k_\alpha)
\]
follows.
\end{proof}

\begin{remark}\label{rem:stability-discussion}
Proposition~\ref{prop:stability} says that the fixed point of a
$\mathcal{D}$-contraction depends Lipschitz-continuously on the
map: a small uniform perturbation of $T$ moves the fixed point by
at most a factor $(1-k_\alpha)^{-1}$ in each $d_\alpha$. The bound
deteriorates as $k_\alpha\to 1$, in line with the metric
folklore that slowly contracting maps have ill-conditioned fixed
points. In the formal-ball setting of
Example~\ref{ex:formal-balls}, where $T(x,r)=(Sx,kr)$ for a Banach
contraction $S$ with constant $k$, the proposition specialises to
the familiar bound
$\rho(x^*,y^*)\le(1-k)^{-1}\sup_x\rho(Sx,S'x)$ between fixed
points of two Banach contractions $S,S'\colon M\to M$.
\end{remark}

\begin{example}\label{ex:asymmetric-norm}
Take $X=\R$ with the upper quasi-pseudometric
$d(x,y)=\max\left\lbrace x-y,0 \right \rbrace$, so that $d^{-1}(x,y)=\max\left\lbrace y-x,0 \right \rbrace$ and
$\Us$ is the standard Euclidean uniformity. Any monotone
nondecreasing Euclidean contraction $T\colon\R\to\R$ with constant
$k$ satisfies~\eqref{eq:contraction} for $\mathcal{D}=\left\lbrace d \right \rbrace$ with
$k_\alpha=k$: monotonicity makes $T(x)-T(y)$ have the same sign as
$x-y$, so the truncation $\max\left\lbrace \cdot,0 \right \rbrace$ commutes with the
$k$-Lipschitz bound. The space $(\R,\U_d)$ is $T_0$ and
bicomplete, so Theorem~\ref{thm:contraction} produces the unique
fixed point. Sign-reversing contractions, such as $T(x)=-\tfrac{x}{2}$,
are not $\left\lbrace d \right \rbrace$-contractions in this sense (although they are
contractions in the symmetrisation $\Us$, where the Banach
principle applies directly).
\end{example}

\begin{example}\label{ex:product}
For $i=1,\dots,n$, let $(X_i,d_i)$ be a $T_0$ bicomplete
quasi-pseudometric space and $T_i\colon X_i\to X_i$ a contraction
with constant $k_i\in[0,1)$. The product $X=\prod_i X_i$ carries the
generating family $\mathcal{D}=\left\lbrace D_i \right \rbrace_{i=1}^{n}$, where
$D_i(x,y):=d_i(\pi_i x,\pi_i y)$ and $\pi_i$ is the $i$-th
projection. The family $\mathcal{D}$ generates a $T_0$ bicomplete
quasi-uniformity, and the map $T=T_1\times\cdots\times T_n$
satisfies~\eqref{eq:contraction} with $k_{D_i}=k_i$. The constants
$k_i$ may differ, and Theorem~\ref{thm:contraction} delivers a
unique fixed point without coupling them through a single
quasi-pseudometric.
\end{example}

The next example is more substantial. It places the contraction
theorem inside the formal-ball construction of Edalat and
Heckmann~\cite{EdalatHeckmann}, where the radius coordinate plays
the role of an explicit error indicator.

\begin{example}[Formal balls of a complete metric space]\label{ex:formal-balls}
Let $(M,\rho)$ be a complete metric space. The space of
\emph{formal balls} of $M$ is
$\mathbb{B}(M):=M\times[0,\infty)$, equipped with the
quasi-pseudometric
\[
  d_{\mathbb{B}}\left((x,r),(y,r')\right)
  :=\max\left\lbrace \rho(x,y)-(r-r'),\,0 \right \rbrace.
\]
A formal ball $(x,r)$ is read as the closed ball
$\overline B(x,r):=\left\lbrace z\in M:\rho(x,z)\le r \right \rbrace\subseteq M$, and the
associated preorder
$(x,r)\le_{\mathbb{B}}(y,r')\iff\rho(x,y)\le r-r'$ coincides with
reverse inclusion of balls. The symmetrisation is the product
metric $d_{\mathbb{B}}^{s}((x,r),(y,r'))=\rho(x,y)+|r-r'|$, so
$(\mathbb{B}(M),\U_{\mathbb{B}})$ is $T_0$ and bicomplete because
$M$ is complete and $[0,\infty)$ is complete; see~\cite{EdalatHeckmann}.

Let $S\colon M\to M$ be a Banach contraction with constant
$k\in[0,1)$ and unique fixed point $x^*\in M$. Define
$T\colon\mathbb{B}(M)\to\mathbb{B}(M)$ by $T(x,r):=(Sx,kr)$. A
direct computation gives
\[
  d_{\mathbb{B}}\left(T(x,r),T(y,r')\right)
   =\max\left\lbrace \rho(Sx,Sy)-k(r-r'),0 \right \rbrace
  \le k\,d_{\mathbb{B}}\left((x,r),(y,r')\right),
\]
because $\rho(Sx,Sy)\le k\rho(x,y)$. The conjugate inequality
$d_{\mathbb{B}}(T(y,r'),T(x,r))\le k\,d_{\mathbb{B}}((y,r'),(x,r))$
follows from the same computation with the sign of $(r-r')$
reversed, using the symmetry of $\rho$ (not of $d_{\mathbb{B}}$,
which is genuinely asymmetric) to identify $\rho(Sy,Sx)=\rho(Sx,Sy)$.
Hence $T$ is
a $\left\lbrace d_{\mathbb{B}} \right \rbrace$-contraction with constant $k$, and
Theorem~\ref{thm:contraction} produces the unique fixed point
$(x^*,0)\in\mathbb{B}(M)$. The Picard iterates from any
$(x_0,r_0)$ are $T^n(x_0,r_0)=(S^n x_0,k^n r_0)$: the centre
follows the classical Picard sequence in $M$, and the radius
collapses geometrically. Corollary~\ref{cor:picard}, evaluated on
the second-coordinate-decreasing direction, gives
\[
  \rho(x^*,S^n x_0)+k^n r_0
  \le \frac{k^n}{1-k}\left(\rho(x_0,Sx_0)+(1-k)r_0\right),
\]
which simplifies to the classical Picard error bound
$\rho(x^*,S^n x_0)\le k^n(1-k)^{-1}\rho(x_0,Sx_0)$ on $M$. The
formal-ball machinery thus reproduces the Banach contraction
theorem and its error bound in a single application of
Theorem~\ref{thm:contraction}.

Figure~\ref{fig:formal-balls} pictures the situation. The preorder
$(x,r)\le_{\mathbb{B}}(y,r')$ corresponds to nesting of closed
balls in $M$, and the iterates $T^n(x_0,r_0)$ trace a sequence of
ever-tighter balls collapsing onto $x^*$.

\smallskip\noindent
\emph{Worked iterates.} Take $M=\R$ with $\rho(x,y)=|x-y|$ and
$S(x)=\tfrac{x}{3}$, so $k=\tfrac13$ and $x^*=0$. Starting from $(x_0,r_0)=(1,1)$,
\[
  T^n(1,1)=\left(3^{-n},\,3^{-n}\right),
  \qquad
  T^n(1,1)\to(0,0)\text{ in }\Us.
\]
The corresponding closed balls
$\overline B(3^{-n},3^{-n})=[\,0,\,2\cdot 3^{-n}\,]\subset\R$
form a nested decreasing family with intersection $\left\lbrace 0 \right \rbrace$, and
Corollary~\ref{cor:picard} bounds the symmetric error by
$|3^{-n}-0|+|3^{-n}-0|=2\cdot 3^{-n}$, which is exactly the
diameter of the $n$-th iterate's underlying ball.
\end{example}

\begin{figure}[htbp]
\centering
\begin{tikzpicture}[x=1cm, y=1cm, >={Stealth[length=2.5mm]}]
  \draw[thick] (0,0) circle (2.4);
  \draw[thick] (0.6,-0.2) circle (1.5);
  \draw[thick] (1.0,-0.35) circle (0.85);
  \draw[thick] (1.25,-0.45) circle (0.4);
  \filldraw (1.4,-0.5) circle (1.4pt) node[right, font=\scriptsize] {$x^*$};
  \filldraw (0,0) circle (1pt) node[left, font=\scriptsize] {$x_0$};
  \filldraw (0.6,-0.2) circle (1pt) node[above right=-1pt, font=\scriptsize] {$Sx_0$};
  \filldraw (1.0,-0.35) circle (1pt);
  \filldraw (1.25,-0.45) circle (1pt);
  \draw[->, thick, gray] (0,0) -- (-1.7,1.7) node[midway, above left, font=\scriptsize, black] {$r_0$};
  \draw[->, thick, gray] (0.6,-0.2) -- (-0.45,0.85) node[midway, above left, font=\scriptsize, black, xshift=2pt] {$kr_0$};
  \node[font=\small] at (4.6,0.6) {$(x_0,r_0)\le_{\mathbb{B}}(Sx_0,kr_0)\le_{\mathbb{B}}\cdots$};
  \node[font=\small] at (4.6,-0.4) {$\bar B(Sx_0,kr_0)\subseteq\bar B(x_0,r_0)$};
  \node[font=\small] at (4.6,-1.4) {$T^n(x_0,r_0)\to(x^*,0)$};
\end{tikzpicture}
\caption{The formal-ball picture of Example~\ref{ex:formal-balls}.
Each iterate $T^n(x_0,r_0)=(S^n x_0,k^n r_0)$ corresponds to a
closed ball in $M$, and the preorder
$(x,r)\le_{\mathbb{B}}(y,r')$ is reverse inclusion. The radii
collapse geometrically and the centres converge to the Banach
fixed point $x^*\in M$, so the iterates converge to the
``infinitely sharp'' formal ball $(x^*,0)$.}
\label{fig:formal-balls}
\end{figure}

\begin{example}[A Volterra-type integral equation]\label{ex:volterra}
Let $X:=C([0,1],\R_{\ge 0})$ be the space of continuous nonnegative
functions on $[0,1]$, equipped with the asymmetric sup
quasi-pseudometric
$d(f,g):=\sup_{t\in[0,1]}(f(t)-g(t))^+$, where
$(\cdot)^+:=\max\left\lbrace \cdot,0 \right \rbrace$. The symmetrisation $d^s$ is the
standard sup metric, in which $X$ is closed in $C([0,1],\R)$, so
$(X,\U_d)$ is $T_0$ and bicomplete. Fix $\varphi\in X$, a
continuous kernel $K\colon[0,1]^2\to[0,\infty)$ bounded by
$K_{\max}$, and a function $\mu\colon[0,1]\to[0,\infty)$ continuous
with $\sup_s\mu(s)\le L$. Consider the integral operator
$T\colon X\to X$ defined by
\[
  (Tf)(t):=\varphi(t)+\int_0^t K(t,s)\,\mu(s)\,f(s)\,ds.
\]
The image $Tf$ is continuous and nonnegative, so $T$ is
well-defined on $X$. For $f,g\in X$,
\begin{align*}
  (Tf)(t)-(Tg)(t)
  &=\int_0^t K(t,s)\mu(s)\left(f(s)-g(s)\right)\,ds.
\end{align*}
Taking the positive part of the integrand and bounding
$K(t,s)\mu(s)\le K_{\max}L$ gives
\[
  \left((Tf)(t)-(Tg)(t)\right)^+
  \le K_{\max}L\int_0^t\left(f(s)-g(s)\right)^+\,ds
  \le K_{\max}L\,t\,d(f,g)\le K_{\max}L\,d(f,g),
\]
where the last step uses $t\in[0,1]$. Taking $\sup_t$ gives
$d(Tf,Tg)\le K_{\max}L\,d(f,g)$. The conjugate inequality
$d(Tg,Tf)\le K_{\max}L\,d(g,f)$ is obtained by the same
computation with the roles of $f$ and $g$ exchanged, so $T$ is a
$\left\lbrace d \right \rbrace$-contraction with constant $k:=K_{\max}L$ whenever
$K_{\max}L<1$. By Theorem~\ref{thm:contraction}, the Volterra
equation
\[
  f(t)=\varphi(t)+\int_0^t K(t,s)\,\mu(s)\,f(s)\,ds
\]
has a unique solution $f^*\in X$, obtained as the limit of Picard
iterates $T^n f_0$ from any starting function $f_0\in X$. The
error bound of Corollary~\ref{cor:picard} gives
$d(T^n f_0,f^*)\le k^n(1-k)^{-1}d(f_0,Tf_0)$, the asymmetric
analogue of the geometric Picard error bound for linear Volterra
equations.

\smallskip\noindent
\emph{A concrete instance.} Take $K(t,s)=1$, $\mu(s)=\tfrac12$ and
$\varphi(t)=1$. Then $T f(t)=1+\tfrac12\int_0^t f(s)\,ds$, the
Lipschitz constant is $k=\tfrac12$, and the unique fixed point in
$X$ satisfies $(f^*)'(t)=\tfrac12 f^*(t)$ with $f^*(0)=1$, hence
$f^*(t)=e^{t/2}$. Starting from $f_0\equiv 0$, $Tf_0\equiv 1$ and
the first few iterates $T^n f_0(t)$ are
$1$, $1+\tfrac{t}{2}$, $1+\tfrac{t}{2}+\tfrac{t^2}{8}$, the
truncations of the Taylor series of $e^{t/2}$. With $f_0\equiv 0$,
the forward asymmetric distance vanishes, $d(f_0,Tf_0)=\sup_t(0-1)^+=0$,
so $d(T^n f_0,f^*)=0$ for every $n$, since the iterates approach $f^*$
from below in the forward direction, which the upper
quasi-pseudometric does not see. The conjugate direction carries
the non-trivial bound: $d(Tf_0,f_0)=1$, and
$d(f^*,T^n f_0)\le 2^{-n}(1-\tfrac12)^{-1}d(Tf_0,f_0)=2^{-(n-1)}$,
the geometric rate of Picard iteration for this kernel.
\end{example}

\begin{remark}[Bielecki rescaling and the full Picard--Lindel\"of regime]\label{rem:bielecki}
The contraction range $K_{\max}L<1$ in
Example~\ref{ex:volterra} reflects the choice of the uniform
quasi-pseudometric $d(f,g)=\sup_{t}(f(t)-g(t))^+$ as the generating
family. A larger range is recovered by replacing $\mathcal{D}=\left\lbrace d \right \rbrace$
with the Bielecki-rescaled family
\[
  \mathcal{D}_\lambda := \left\lbrace d_\lambda \right \rbrace,\qquad
  d_\lambda(f,g):=\sup_{t\in[0,1]}e^{-\lambda t}\left(f(t)-g(t)\right)^+,
\]
for $\lambda>0$. A short computation gives
\[
  d_\lambda(Tf,Tg)\le K_{\max}L\,\sup_{t\in[0,1]}e^{-\lambda t}\!\int_0^t e^{\lambda s}\,ds\cdot d_\lambda(f,g)
  \le \frac{K_{\max}L}{\lambda}\,d_\lambda(f,g),
\]
so $T$ is a $\mathcal{D}_\lambda$-contraction with constant
$K_{\max}L/\lambda$ for every $\lambda>K_{\max}L$, irrespective of
the size of $K_{\max}L$. The symmetrisations $d_\lambda^{\,s}$ are
equivalent to the sup metric on $[0,1]$ for each $\lambda>0$, so
the underlying quasi-uniformity and the bicompleteness of
$(X,\U_{d_\lambda})$ are unchanged, and
Theorem~\ref{thm:contraction} delivers the unique fixed point of
the Volterra operator over the full classical Picard--Lindel\"of
regime. The flexibility to choose the generating family is
therefore a genuine feature of the quasi-uniform setting, and
parallels the classical Bielecki argument.
\end{remark}

\section{Endpoints via the dual quasi-uniformity}\label{sec:endpoint}

The contraction theorem of Section~\ref{sec:contraction} produces
a fixed point witnessing the startpoint relation
$(x^*,Tx^*)\in\bigcap\U$. To obtain the endpoint relation
$(Tx^*,x^*)\in\bigcap\U$, we apply the same theorem to the
conjugate quasi-uniformity $\Uinv$. Rather than repeat the
argument in a dual guise, we isolate a single structural fact
(Lemma~\ref{lem:conjugation}): every hypothesis appearing in
Sections~\ref{sec:trivial}--\ref{sec:contraction} is invariant
under the passage $\U\mapsto\Uinv$, so each result of
Section~\ref{sec:contraction} gives rise to an endpoint counterpart
by symmetry. The section is organised in three parts. In
Subsection~\ref{subsec:conjugation} we record the two lemmas
identifying $\U$-endpoints with $\Uinv$-startpoints and prove the
conjugation lemma. In
Subsection~\ref{subsec:endpoint-versions} we deduce endpoint
versions of the contraction principle, the Picard error bound, the
Boyd--Wong companion, and the stability estimate. In
Subsection~\ref{subsec:endpoint-examples} we read the running
examples of Section~\ref{sec:contraction} through the endpoint
lens and close with a remark on the collapse of the two notions
when $\U$ is symmetric.

\subsection{The conjugation principle}\label{subsec:conjugation}

\begin{lemma}\label{lem:dual-startpoint}
Let $(X,\U)$ be a quasi-uniform space and $T\colon X\to 2^X$. A point
$x_0\in X$ is a $\U$-endpoint of $T$ if and only if it is an
$\Uinv$-startpoint of $T$.
\end{lemma}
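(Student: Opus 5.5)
The plan is to reduce everything to a single set-theoretic identity, namely $\bigcap\Uinv=\left(\bigcap\U\right)^{-1}$, and then read off the equivalence directly from Definition~\ref{def:fixed-start-end}(b). Before doing so, I will check that $\Uinv$ is itself a quasi-uniformity, so that the phrase ``$\Uinv$-startpoint'' makes sense.

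\emph{Step 1 ($\Uinv$ is a quasi-uniformity).} Inversion $U\mapsto U^{-1}$ is a bijection of the subsets of $X\times X$ that preserves inclusions and finite intersections. It therefore carries the filter $\U$ onto a filter. Each $U^{-1}$ contains $\diag$, since $\diag^{-1}=\diag$, which gives (U1). For (U2), take $U\in\U$ and choose $V\in\U$ with $V\circ V\subseteq U$. Then
\[
V^{-1}\circ V^{-1}=(V\circ V)^{-1}\subseteq U^{-1},
\]
where the equality is checked by unwinding the definition: the intermediate point $y$ simply swaps roles. This step is routine and is the standard fact recorded in~\cite{FletcherLindgren,Kunzi}.

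\emph{Step 2 (the key identity).} For a pair $(x,y)$, membership $(x,y)\in\bigcap\Uinv$ means $(x,y)\in U^{-1}$ for every $U\in\U$. This is the same as $(y,x)\in U$ for every $U\in\U$, that is, $(y,x)\in\bigcap\U$. Hence $\bigcap\Uinv=\left(\bigcap\U\right)^{-1}$.

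\emph{Step 3 (conclusion).} By definition, $x_0$ is a $\Uinv$-startpoint of $T$ iff there is some $z\in Tx_0$ with $(x_0,z)\in\bigcap\Uinv$. By Step~2 this holds iff there is some $z\in Tx_0$ with $(z,x_0)\in\bigcap\U$, which is exactly the statement that $x_0$ is a $\U$-endpoint of $T$. The same $z$ serves as witness on both sides, so the argument in fact shows that startpoint witnesses for $\Uinv$ coincide with endpoint witnesses for $\U$.

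No step here is a real obstacle. The only point needing care is Step~1, namely the composition identity $(V\circ V)^{-1}=V^{-1}\circ V^{-1}$, and this follows by a direct element chase.
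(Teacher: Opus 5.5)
Your proposal is correct and matches the paper's proof: both rest on the identity $\bigcap\Uinv=(\bigcap\U)^{-1}$ and then observe that the two existential statements coincide on the same witness $z\in Tx_0$. Your Step~1, which verifies that $\Uinv$ is a quasi-uniformity, is a harmless extra check that the paper takes for granted.
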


\begin{proof}
By definition, $x_0$ is a $\U$-endpoint of $T$ if there exists
$z\in Tx_0$ with $(z,x_0)\in\bigcap\U$, and $x_0$ is a
$\Uinv$-startpoint of $T$ if there exists $z\in Tx_0$ with
$(x_0,z)\in\bigcap\Uinv$. The conjugate quasi-uniformity is
$\Uinv=\left\lbrace U^{-1}:U\in\U\right\rbrace$, where
$U^{-1}=\left\lbrace (x,y):(y,x)\in U\right\rbrace$, so
$\bigcap\Uinv=\bigcap_{U\in\U}U^{-1}=\left(\bigcap_{U\in\U}U\right)^{-1}=(\bigcap\U)^{-1}$.
Therefore $(x_0,z)\in\bigcap\Uinv$ if and only if
$(z,x_0)\in\bigcap\U$, and the two existential statements coincide
on the same witness $z\in Tx_0$.
\end{proof}

Applying Proposition~\ref{prop:triv-multi} to $\Uinv$ gives:

\begin{proposition}\label{prop:triv-endpoint}
Let $(X,\U)$ be a quasi-uniform space, $T\colon X\to 2^X$ and
$x^*\in X$ with $Tx^*\neq\emptyset$. The following are equivalent:
\begin{enumerate}[label=\textup{(\roman*)}]
  \item $U^{-1}(x^*)\cap U(z)\neq\emptyset$ for every $U\in\U$ and every
        $z\in Tx^*$;
  \item $(z,x^*)\in\bigcap\U$ for every $z\in Tx^*$.
\end{enumerate}
\end{proposition}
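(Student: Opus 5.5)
The plan is to apply Proposition~\ref{prop:triv-multi} to the conjugate quasi-uniform space $(X,\Uinv)$ and then translate both conditions back into the language of $\U$. First I will confirm that $\Uinv$ is itself a quasi-uniformity. It is a filter because $U\mapsto U^{-1}$ is a bijection on subsets of $X\times X$ that preserves inclusions and finite intersections. Each $U^{-1}$ contains $\diag$. Axiom~(U2) transfers because $(V\circ V)^{-1}=V^{-1}\circ V^{-1}$, so $V\circ V\subseteq U$ gives $V^{-1}\circ V^{-1}\subseteq U^{-1}$. Proposition~\ref{prop:triv-multi} therefore applies verbatim to $(X,\Uinv)$ with the same $T$ and $x^*$.

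Next I will translate condition~(ii). By the computation in the proof of Lemma~\ref{lem:dual-startpoint}, $\bigcap\Uinv=(\bigcap\U)^{-1}$. Hence ``$(x^*,z)\in\bigcap\Uinv$ for every $z\in Tx^*$'' is exactly condition~(ii) of the present statement.

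Then I will translate condition~(i). For an entourage $W=U^{-1}\in\Uinv$, the ball is $W(x^*)=\{y:(x^*,y)\in U^{-1}\}=\{y:(y,x^*)\in U\}=U^{-1}(x^*)$. The inverse ball is $W^{-1}(z)=(U^{-1})^{-1}(z)=U(z)$. As $W$ ranges over $\Uinv$, $U$ ranges over $\U$. So condition~(i) of Proposition~\ref{prop:triv-multi} for $\Uinv$ reads $U^{-1}(x^*)\cap U(z)\neq\emptyset$ for all $U\in\U$ and $z\in Tx^*$, which is condition~(i) here.

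The only delicate step is this notational bookkeeping: one must keep track of which symbol $U^{-1}(\cdot)$ denotes, the ball of $U^{-1}$ versus the preimage under $U$. Both readings coincide, so no real obstacle arises. As a safeguard I will also give the direct argument. For (ii)$\Rightarrow$(i), take $y=x^*$, using $(x^*,x^*)\in U$ and $(z,x^*)\in U$. For (i)$\Rightarrow$(ii), given $W\in\U$, choose $V$ with $V\circ V\subseteq W$ and pick $y$ with $(y,x^*)\in V$ and $(z,y)\in V$. Then $(z,x^*)\in V\circ V\subseteq W$.
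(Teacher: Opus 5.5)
Your proof is correct and takes the same route as the paper, which obtains this statement directly by applying Proposition~\ref{prop:triv-multi} to the conjugate quasi-uniformity $\Uinv$. Your check that $\Uinv$ is a quasi-uniformity, your translation of the balls $W(x^*)=U^{-1}(x^*)$ and $W^{-1}(z)=U(z)$, and your direct (U2) argument are all correct; they supply details the paper leaves implicit.
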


We now record the workhorse of the section: every hypothesis
appearing in Section~\ref{sec:contraction} transfers verbatim
between $\U$ and its conjugate $\Uinv$.

\begin{lemma}[Conjugation invariance]\label{lem:conjugation}
Let $\mathcal{D}=\left\lbrace d_\alpha\right\rbrace_{\alpha\in A}$
be a family of quasi-pseudometrics on $X$ generating a
quasi-uniformity $\U$, and let
$\mathcal{D}^{-1}:=\left\lbrace d_\alpha^{-1}\right\rbrace_{\alpha\in A}$
denote the family of conjugates, where
$d_\alpha^{-1}(x,y):=d_\alpha(y,x)$. Then:
\begin{enumerate}[label=\textup{(\alph*)}]
  \item $\mathcal{D}^{-1}$ generates $\Uinv$.
  \item $(X,\U)$ is bicomplete iff $(X,\Uinv)$ is, and $\U$ is
        $T_0$ iff $\Uinv$ is; in both cases the symmetrisations
        coincide, $\Us=\Uinvs$.
  \item A self-map $T\colon X\to X$ is a
        $\mathcal{D}$-contraction with constants
        $\left\lbrace k_\alpha\right\rbrace$ iff $T$ is a
        $\mathcal{D}^{-1}$-contraction with the same constants
        $\left\lbrace k_\alpha\right\rbrace$.
  \item A self-map $T\colon X\to X$ is a $\mathcal{D}$-Boyd--Wong
        contraction with comparison family
        $\left\lbrace \varphi_\alpha\right\rbrace$ iff $T$ is a
        $\mathcal{D}^{-1}$-Boyd--Wong contraction with the same
        family $\left\lbrace \varphi_\alpha\right\rbrace$.
\end{enumerate}
\end{lemma}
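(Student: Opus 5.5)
The plan is to verify the four items in order. Each one reduces either to the identity $U_{d^{-1},\varepsilon}=(U_{d,\varepsilon})^{-1}$ or to a relabelling of the quantifiers in the defining inequalities.

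For (a), I start from the subbase $\{U_{d_\alpha,\varepsilon}\}$ of $\U$ and observe that
\[
U_{d_\alpha^{-1},\varepsilon}=\{(x,y):d_\alpha(y,x)<\varepsilon\}=(U_{d_\alpha,\varepsilon})^{-1}.
\]
Inversion $U\mapsto U^{-1}$ is a bijection on subsets of $X\times X$ that commutes with finite intersections and preserves inclusions. It therefore carries the filter generated by a subbase onto the filter generated by the inverted subbase. Hence the quasi-uniformity generated by $\mathcal{D}^{-1}$ is exactly $\{U^{-1}:U\in\U\}=\Uinv$. One should also note that each $d_\alpha^{-1}$ is again a quasi-pseudometric: $d_\alpha^{-1}(x,x)=0$, and the triangle inequality for $d_\alpha^{-1}$ is the triangle inequality for $d_\alpha$ read with the points in reverse order.

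For (b), the key fact is $(\Uinv)^{-1}=\U$, so $\Uinvs=\Uinv\vee\U=\U\vee\Uinv=\Us$. Both bicompleteness and $T_0$ are properties of the symmetrisation: bicompleteness means completeness of $(X,\Us)$, and $T_0$ means that $\Us$ is Hausdorff, as recalled in the preliminaries. Since the two symmetrisations coincide, both equivalences follow at once. Alternatively, for $T_0$ one can argue directly from $\bigcap\Uinv=(\bigcap\U)^{-1}$ (shown in the proof of Lemma~\ref{lem:dual-startpoint}), because a relation is antisymmetric iff its inverse is.

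For (c) and (d), I rewrite the two lines of \eqref{eq:contraction} in terms of $d_\alpha^{-1}$. The first line, $d_\alpha(Tx,Ty)\le k_\alpha d_\alpha(x,y)$, reads $d_\alpha^{-1}(Ty,Tx)\le k_\alpha d_\alpha^{-1}(y,x)$, which is the second line of the $\mathcal{D}^{-1}$-condition. Likewise the second line for $\mathcal{D}$ is the first line for $\mathcal{D}^{-1}$. The pair of conditions is thus exchanged, and the constants $k_\alpha$ are untouched. The same literal substitution in \eqref{eq:bw-contraction} gives (d); the hypotheses on $\varphi_\alpha$ (monotone, right USC, $\varphi_\alpha(0)=0$, $\varphi_\alpha(t)<t$) involve no distance and are unchanged. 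Neither argument needs the redundancy observed in Remark~\ref{rem:sharpness}, though that remark already gives (c).

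The only step requiring any care is (a), specifically checking that inversion of a subbase yields the subbase of the inverted filter (closure under finite intersections and supersets). This is routine, so I expect no real obstacle.
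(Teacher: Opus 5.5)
Your proposal is correct and follows essentially the same route as the paper: inverting the generating entourages $U_{d_\alpha,\varepsilon}$ for (a), reducing bicompleteness and $T_0$ to the common symmetrisation $\Us=\Uinvs$ (with $\bigcap\Uinv=(\bigcap\U)^{-1}$ for antisymmetry) for (b), and using the substitution $d_\alpha(u,v)=d_\alpha^{-1}(v,u)$, which swaps the two lines of the contraction conditions, for (c)--(d). The differences are only cosmetic: you obtain $\Us=\Uinvs$ at the filter level rather than via $(d_\alpha^{-1})^s=d_\alpha^s$, and you treat the sets $U_{d_\alpha,\varepsilon}$ as a subbase, closing under finite intersections, where the paper loosely calls them a base.
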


\begin{proof}
(a) A basic entourage of $\U$ generated by $\mathcal{D}$ is a set
of the form
$B_{\alpha,\varepsilon}:=\left\lbrace (x,y):d_\alpha(x,y)<\varepsilon\right\rbrace$
for $\alpha\in A$ and $\varepsilon>0$. Its inverse
$B_{\alpha,\varepsilon}^{-1}=\left\lbrace (x,y):d_\alpha(y,x)<\varepsilon\right\rbrace
 =\left\lbrace (x,y):d_\alpha^{-1}(x,y)<\varepsilon\right\rbrace$
is precisely the corresponding basic entourage generated by
$d_\alpha^{-1}$. The family of inverses of a base for $\U$ is a
base for $\Uinv$, so $\mathcal{D}^{-1}$ generates $\Uinv$.

(b) The symmetrisation of a quasi-pseudometric is
$d^s=\max\left\lbrace d,d^{-1}\right\rbrace$; since
$(d_\alpha^{-1})^{-1}=d_\alpha$, one has
$(d_\alpha^{-1})^s=\max\left\lbrace d_\alpha^{-1},d_\alpha\right\rbrace=d_\alpha^s$.
Thus the generating family of $\Us$ coincides with the generating
family of $\Uinvs$, and $\Us=\Uinvs$. Bicompleteness of
$(X,\U)$ is defined as Cauchy-completeness of the pseudometric
uniformity $\Us$ and therefore depends only on $\Us$; the same
holds for $(X,\Uinv)$ via $\Uinvs=\Us$. The $T_0$ axiom for
$\U$ asserts that the preorder $\bigcap\U$ is antisymmetric,
i.e.\ that $\bigcap\U\cap(\bigcap\U)^{-1}=\diag$; the same
condition on $\Uinv$ reads
$\bigcap\Uinv\cap(\bigcap\Uinv)^{-1}=\diag$, and
$\bigcap\Uinv=(\bigcap\U)^{-1}$ (as noted in the proof of
Lemma~\ref{lem:dual-startpoint}). Hence the two $T_0$ conditions
are the same equation.

(c) Fix $\alpha\in A$ and $x,y\in X$. The two lines
of~\eqref{eq:contraction} for $T$ with respect to $d_\alpha$
read
\[
  d_\alpha(Tx,Ty)\le k_\alpha\,d_\alpha(x,y),
  \qquad
  d_\alpha(Ty,Tx)\le k_\alpha\,d_\alpha(y,x).
\]
Substituting $d_\alpha(u,v)=d_\alpha^{-1}(v,u)$ in each line and
exchanging the variable names transforms the pair into
\[
  d_\alpha^{-1}(Ty,Tx)\le k_\alpha\,d_\alpha^{-1}(y,x),
  \qquad
  d_\alpha^{-1}(Tx,Ty)\le k_\alpha\,d_\alpha^{-1}(x,y),
\]
which is~\eqref{eq:contraction} for $T$ with respect to
$d_\alpha^{-1}$. Since the transformation is an involution, the
two contraction conditions are equivalent, with the same
constants $\left\lbrace k_\alpha\right\rbrace$.

(d) The same substitution applied to
Definition~\ref{def:boyd-wong} maps the two lines
of~\eqref{eq:bw-contraction} to each other with $d_\alpha$
replaced by $d_\alpha^{-1}$, and the comparison functions
$\varphi_\alpha$ are unchanged.
\end{proof}

\begin{remark}\label{rem:conjugation-scope}
Lemma~\ref{lem:conjugation} does more than record a duality of
notation. Parts (a)--(b) show that the ambient hypotheses of
Theorem~\ref{thm:contraction} ($T_0$ and bicompleteness) are
properties of $\Us$, and parts (c)--(d) show that the
contraction and Boyd--Wong conditions are, by construction, closed
under the swap of the two lines of~\eqref{eq:contraction} (respectively
of~\eqref{eq:bw-contraction}); this closure is precisely what the
two-sided form of the definition was designed to guarantee
(cf.\ Remark~\ref{rem:sharpness}). Consequently, every theorem
and corollary of Section~\ref{sec:contraction} whose hypothesis
list is exhausted by these items admits an endpoint counterpart
obtained by a purely mechanical substitution
$(\U,\mathcal{D})\leftrightarrow(\Uinv,\mathcal{D}^{-1})$; we
harvest the four principal such counterparts in
Subsection~\ref{subsec:endpoint-versions}.
\end{remark}

\subsection{Endpoint versions of the contraction results}\label{subsec:endpoint-versions}

Combining Lemma~\ref{lem:dual-startpoint} with
Lemma~\ref{lem:conjugation}, each of the four principal results
of Section~\ref{sec:contraction} yields an endpoint corollary.
We state them in the same order in which they were proved:
Theorem~\ref{thm:contraction}, Corollary~\ref{cor:picard},
Theorem~\ref{thm:boyd-wong}, and Proposition~\ref{prop:stability}.

\begin{corollary}[Endpoint contraction theorem]\label{cor:contraction-endpoint-thm}
Let $(X,\U)$ be a $T_0$ bicomplete quasi-uniform space generated
by a family $\mathcal{D}=\left\lbrace d_\alpha\right\rbrace_{\alpha\in A}$
of quasi-pseudometrics, and let $T\colon X\to X$ be a
$\mathcal{D}$-contraction. Then $T$ has a unique fixed point
$x^*\in X$, and $x^*$ is a $\U$-endpoint of $T$ witnessed by
$z=x^*$. Moreover, for every $x_0\in X$ the sequence
$(T^n x_0)_{n\ge 0}$ converges to $x^*$ in $\Us$.
\end{corollary}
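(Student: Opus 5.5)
The plan is to obtain the corollary by the mechanical conjugation $(\U,\mathcal{D})\leftrightarrow(\Uinv,\mathcal{D}^{-1})$ of Remark~\ref{rem:conjugation-scope}. Once that transfer is set up, the endpoint statement reduces to Theorem~\ref{thm:contraction} together with reflexivity of the preorder.

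First, by Lemma~\ref{lem:conjugation}(a), the family $\mathcal{D}^{-1}$ generates $\Uinv$. By part~(b), $(X,\Uinv)$ is $T_0$ and bicomplete, with $\Uinvs=\Us$. By part~(c), $T$ is a $\mathcal{D}^{-1}$-contraction with the same constants $\{k_\alpha\}$. So Theorem~\ref{thm:contraction} applies to $(X,\Uinv,\mathcal{D}^{-1},T)$. It yields a unique fixed point $x^*$ with $Tx^*=x^*$, and for every $x_0$ the iterates $T^nx_0$ converge to $x^*$ in $\Uinvs=\Us$.

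Since ``fixed point'' means genuine equality $Tx=x$, a notion independent of the quasi-uniformity, this $x^*$ coincides with the fixed point furnished by Theorem~\ref{thm:contraction} applied directly to $(X,\U,\mathcal{D},T)$. Uniqueness on either side guarantees this, so no ambiguity arises. For the same reason the convergence clause can be read off from either application, because both symmetrisations agree.

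It remains to verify the endpoint witness. Regard $T$ as the multivalued map $x\mapsto\{Tx\}$ and take $z=Tx^*=x^*$. Then $(z,x^*)=(x^*,x^*)\in\diag\subseteq\bigcap\U$ by axiom~(U1), so $x^*$ is a $\U$-endpoint witnessed by $z=x^*$. Equivalently, and more in the spirit of the section, $(x^*,x^*)\in\bigcap\Uinv$ makes $x^*$a $\Uinv$-startpoint. Lemma~\ref{lem:dual-startpoint} then converts this into the $\U$-endpoint statement with the same witness.

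There is no real obstacle. The only point needing care is bookkeeping: convergence in $\Uinvs$ must be identified with convergence in $\Us$, which is exactly Lemma~\ref{lem:conjugation}(b). One should also note that the witness check is trivial because the fixed point is an honest equality under $T_0$.
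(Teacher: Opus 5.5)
Your proposal is correct and follows essentially the same route as the paper. You transfer the hypotheses to $(X,\Uinv,\mathcal{D}^{-1})$ via Lemma~\ref{lem:conjugation}(a)--(c), apply Theorem~\ref{thm:contraction} there, identify the fixed point by uniqueness of solutions of $Tx=x$, read convergence through $\Uinvs=\Us$, and convert the $\Uinv$-startpoint witness $z=x^*$ into a $\U$-endpoint via Lemma~\ref{lem:dual-startpoint}. Your side remark is also accurate: the witness follows directly from $(x^*,x^*)\in\diag\subseteq\bigcap\U$, so the conjugation detour is optional for this statement.
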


\begin{proof}
By Lemma~\ref{lem:conjugation}(a)--(b), $(X,\Uinv)$ is a $T_0$
bicomplete quasi-uniform space generated by $\mathcal{D}^{-1}$;
by Lemma~\ref{lem:conjugation}(c), $T$ is a
$\mathcal{D}^{-1}$-contraction with the same constants.
Theorem~\ref{thm:contraction} applied to
$(X,\Uinv,\mathcal{D}^{-1})$ therefore produces a unique fixed
point $x^{**}\in X$ of $T$. Uniqueness of the fixed point of $T$
(which is an equation in $X$, independent of $\U$) forces
$x^{**}=x^*$. The startpoint content of
Theorem~\ref{thm:contraction} applied to $\Uinv$ asserts
$(x^*,x^*)\in\bigcap\Uinv$, which is trivially true; the
witnessing content is that $x^*$ is a $\Uinv$-startpoint of $T$
witnessed by $z=x^*$, and Lemma~\ref{lem:dual-startpoint}
translates this to $x^*$ being a $\U$-endpoint of $T$ witnessed
by $z=x^*$. Convergence in $\Us$ is unchanged because
$\Us=\Uinvs$.
\end{proof}

\begin{corollary}[Endpoint Picard error bound]\label{cor:picard-endpoint}
Under the hypotheses of Theorem~\ref{thm:contraction}, for every
$x_0\in X$, every $\alpha\in A$ and every $n\ge 0$,
\[
  d_\alpha(x^*,T^n x_0)\le \frac{k_\alpha^n}{1-k_\alpha}\,d_\alpha(Tx_0,x_0),
  \qquad
  d_\alpha(T^n x_0,x^*)\le \frac{k_\alpha^n}{1-k_\alpha}\,d_\alpha(x_0,Tx_0).
\]
\end{corollary}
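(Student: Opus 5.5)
The plan is to derive the statement by the conjugation principle rather than by repeating the Cauchy-tail argument. By Lemma~\ref{lem:conjugation}(a)--(c), $(X,\Uinv)$ is a $T_0$ bicomplete quasi-uniform space generated by $\mathcal{D}^{-1}=\{d_\alpha^{-1}\}_{\alpha\in A}$, and $T$ is a $\mathcal{D}^{-1}$-contraction with the same constants $\{k_\alpha\}$. Corollary~\ref{cor:contraction-endpoint-thm} shows that the fixed point produced on the $\Uinv$ side is the same $x^*$: fixed points are equations in $X$, and uniqueness holds on either side. So Corollary~\ref{cor:picard} may be applied verbatim to the data $(X,\Uinv,\mathcal{D}^{-1},T)$ with the same $x^*$.

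The first inequality of Corollary~\ref{cor:picard} for $d_\alpha^{-1}$ reads
\[
  d_\alpha^{-1}(T^n x_0,x^*)\le \frac{k_\alpha^n}{1-k_\alpha}\,d_\alpha^{-1}(x_0,Tx_0).
\]
Unwinding $d_\alpha^{-1}(u,v)=d_\alpha(v,u)$ turns this into $d_\alpha(x^*,T^n x_0)\le k_\alpha^n(1-k_\alpha)^{-1}d_\alpha(Tx_0,x_0)$, which is the first displayed bound. Likewise, the second inequality of Corollary~\ref{cor:picard} for $d_\alpha^{-1}$, namely $d_\alpha^{-1}(x^*,T^nx_0)\le k_\alpha^n(1-k_\alpha)^{-1}d_\alpha^{-1}(Tx_0,x_0)$, unwinds to $d_\alpha(T^n x_0,x^*)\le k_\alpha^n(1-k_\alpha)^{-1}d_\alpha(x_0,Tx_0)$, the second displayed bound.

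There is no real obstacle here. The only point needing care is that the limit passage inside the proof of Corollary~\ref{cor:picard} uses convergence in the symmetrisation, and this is the same on both sides because $\Us=\Uinvs$ by Lemma~\ref{lem:conjugation}(b). Once the identification of the fixed point is in place, the rest is the index-swap bookkeeping.

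As a sanity check, I would also note that the two bounds are exactly the two halves of Corollary~\ref{cor:picard} listed in the opposite order. A direct proof is therefore also available: cite Corollary~\ref{cor:picard} and permute the two inequalities. The conjugation route is presented because it is the mechanism the section advertises.
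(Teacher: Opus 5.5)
Your proposal is correct and matches the paper's proof: apply Corollary~\ref{cor:picard} to $(X,\Uinv,\mathcal{D}^{-1},T)$ via Lemma~\ref{lem:conjugation}(a)--(c), then unwind $d_\alpha^{-1}(u,v)=d_\alpha(v,u)$. The paper leaves the identification of the $\Uinv$-side fixed point with $x^*$ implicit here (it is handled in Corollary~\ref{cor:contraction-endpoint-thm}), and your closing observation is also right: the two bounds are exactly those of Corollary~\ref{cor:picard} in reverse order, so a direct citation of that corollary already suffices.
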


\begin{proof}
Apply Corollary~\ref{cor:picard} in the quasi-uniform space
$(X,\Uinv)$ with generating family $\mathcal{D}^{-1}$; the
hypotheses transfer by Lemma~\ref{lem:conjugation}(a)--(c). The
two inequalities of Corollary~\ref{cor:picard}, applied to
$d_\alpha^{-1}$ in place of $d_\alpha$, give
\[
  d_\alpha^{-1}(T^n x_0,x^*)\le\frac{k_\alpha^n}{1-k_\alpha}\,d_\alpha^{-1}(x_0,Tx_0),
  \qquad
  d_\alpha^{-1}(x^*,T^n x_0)\le\frac{k_\alpha^n}{1-k_\alpha}\,d_\alpha^{-1}(Tx_0,x_0),
\]
and the identity $d_\alpha^{-1}(u,v)=d_\alpha(v,u)$ rewrites
both inequalities in the stated form.
\end{proof}

\begin{remark}\label{rem:picard-endpoint}
Corollaries~\ref{cor:picard} and~\ref{cor:picard-endpoint}
together account for the four numbers
$d_\alpha(T^n x_0,x^*)$, $d_\alpha(x^*,T^n x_0)$,
$d_\alpha^{-1}(T^n x_0,x^*)$, $d_\alpha^{-1}(x^*,T^n x_0)$,
each bounded by $k_\alpha^n(1-k_\alpha)^{-1}$ times one of the
four seed distances
$d_\alpha(x_0,Tx_0)$, $d_\alpha(Tx_0,x_0)$,
$d_\alpha^{-1}(x_0,Tx_0)$, $d_\alpha^{-1}(Tx_0,x_0)$. Only two
of the four seed distances are independent, since
$d_\alpha^{-1}(x_0,Tx_0)=d_\alpha(Tx_0,x_0)$; likewise only two
of the four errors are independent. The two independent forms of
Corollary~\ref{cor:picard-endpoint} coincide with the two
independent forms of Corollary~\ref{cor:picard} up to a swap of
$d_\alpha$ and $d_\alpha^{-1}$, and the joint statement is what
records the per-direction Picard bookkeeping of the abstract in
its cleanest form.
\end{remark}

\begin{corollary}[Endpoint Boyd--Wong contraction principle]\label{cor:boyd-wong-endpoint}
Let $(X,\U)$ be a $T_0$ bicomplete quasi-uniform space generated
by a family $\mathcal{D}=\left\lbrace d_\alpha\right\rbrace_{\alpha\in A}$
of quasi-pseudometrics, and let $T\colon X\to X$ be a
$\mathcal{D}$-Boyd--Wong contraction. Then the unique fixed
point $x^*\in X$ produced by Theorem~\ref{thm:boyd-wong} is a
$\U$-endpoint of $T$, witnessed by $z=x^*$.
\end{corollary}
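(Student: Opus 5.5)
The proof will mirror the one given for Corollary~\ref{cor:contraction-endpoint-thm}, with Theorem~\ref{thm:boyd-wong} in place of Theorem~\ref{thm:contraction} and part (d) of Lemma~\ref{lem:conjugation} in place of part (c). First, Lemma~\ref{lem:conjugation}(a)--(b) shows that $(X,\Uinv)$ is a $T_0$ bicomplete quasi-uniform space generated by $\mathcal{D}^{-1}$, with symmetrisation $\Uinvs=\Us$. Next, Lemma~\ref{lem:conjugation}(d) shows that $T$ is a $\mathcal{D}^{-1}$-Boyd--Wong contraction with the same comparison family $\left\lbrace\varphi_\alpha\right\rbrace$. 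The hypotheses of Theorem~\ref{thm:boyd-wong} therefore hold for $(X,\Uinv,\mathcal{D}^{-1},T)$.

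Applying Theorem~\ref{thm:boyd-wong} in this conjugate setting gives a unique fixed point $x^{**}$ of $T$. The equation $Tx=x$ is a statement in $X$ and does not depend on the quasi-uniformity. Since Theorem~\ref{thm:boyd-wong} applied to $(X,\U,\mathcal{D})$ already shows that $x^*$ is the unique solution, we get $x^{**}=x^*$. This identification is the one point where care is needed: we must check that the two invocations of Theorem~\ref{thm:boyd-wong} really produce the same point. Uniqueness in either space settles it.

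It remains to exhibit the witness. Because $Tx^*=x^*$, the pair $(x^*,Tx^*)=(x^*,x^*)$ lies in the diagonal $\diag$. By axiom (U1) applied to $\Uinv$, the diagonal is contained in every entourage, so $(x^*,x^*)\in\bigcap\Uinv$. Hence $x^*$ is a $\Uinv$-startpoint of the map $x\mapsto\left\lbrace Tx\right\rbrace$, witnessed by $z=x^*$. Lemma~\ref{lem:dual-startpoint} then converts this into the statement that $x^*$ is a $\U$-endpoint of $T$ witnessed by $z=x^*$.

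Alternatively, one can bypass the conjugation machinery altogether. Since $Tx^*=x^*$ and $(x^*,x^*)\in\diag\subseteq\bigcap\U$, the relation $(Tx^*,x^*)\in\bigcap\U$ holds directly. I would include this as a remark confirming the conclusion, but present the conjugation route as the main proof for parallelism with Corollary~\ref{cor:contraction-endpoint-thm}.
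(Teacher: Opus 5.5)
Your proposal is correct and follows the paper's proof. Lemma~\ref{lem:conjugation}(a)--(b) and (d) transfer the hypotheses to $(X,\Uinv,\mathcal{D}^{-1})$, Theorem~\ref{thm:boyd-wong} is applied there, and Lemma~\ref{lem:dual-startpoint} converts the resulting $\Uinv$-startpoint into a $\U$-endpoint. Your explicit identification $x^{**}=x^*$ by uniqueness, and your passage to $x\mapsto\{Tx\}$ so that Lemma~\ref{lem:dual-startpoint} literally applies, make precise what the paper compresses into ``the same fixed point''; your direct diagonal remark also correctly notes that the conclusion is immediate once $Tx^*=x^*$ is known.
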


\begin{proof}
Lemma~\ref{lem:conjugation}(a)--(b) transfers the ambient
hypotheses to $(X,\Uinv,\mathcal{D}^{-1})$;
Lemma~\ref{lem:conjugation}(d) transfers the Boyd--Wong
condition. Theorem~\ref{thm:boyd-wong} applied to
$(X,\Uinv,\mathcal{D}^{-1})$ produces a $\Uinv$-startpoint of
$T$ witnessed by the same fixed point $x^*\in X$, and
Lemma~\ref{lem:dual-startpoint} translates this to a
$\U$-endpoint.
\end{proof}

\begin{corollary}[Endpoint stability]\label{cor:stability-endpoint}
Under the hypotheses of Proposition~\ref{prop:stability}, the
unique fixed points $x^*,y^*\in X$ of the two
$\mathcal{D}$-contractions $T,T'$ with common constants
$\left\lbrace k_\alpha\right\rbrace\subseteq[0,1)$ satisfy
\[
  d_\alpha(y^*,x^*)\le\frac{1}{1-k_\alpha}\sup_{x\in X}d_\alpha^{-1}(Tx,T'x)
  =\frac{1}{1-k_\alpha}\sup_{x\in X}d_\alpha(T'x,Tx)
\]
for every $\alpha\in A$, and symmetrically
\[
  d_\alpha(x^*,y^*)\le\frac{1}{1-k_\alpha}\sup_{x\in X}d_\alpha(Tx,T'x).
\]
The two bounds coincide with those of
Proposition~\ref{prop:stability}, exhibiting the stability
estimate as an intrinsically two-directional statement.
\end{corollary}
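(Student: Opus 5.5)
The plan is to mirror the proofs of the three preceding endpoint corollaries: apply Proposition~\ref{prop:stability} in the conjugate setting $(X,\Uinv,\mathcal{D}^{-1})$ and then translate back using $d_\alpha^{-1}(u,v)=d_\alpha(v,u)$.

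\textbf{Step 1: transfer the hypotheses.} By Lemma~\ref{lem:conjugation}(a)--(b), $(X,\Uinv)$ is a $T_0$ bicomplete quasi-uniform space generated by $\mathcal{D}^{-1}$. By Lemma~\ref{lem:conjugation}(c), both $T$ and $T'$ are $\mathcal{D}^{-1}$-contractions with the same common constants $\left\lbrace k_\alpha\right\rbrace$.

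\textbf{Step 2: identify the fixed points.} As in the proof of Corollary~\ref{cor:contraction-endpoint-thm}, the unique fixed points of $T$ and $T'$ are set-theoretic solutions of $Tx=x$ and $T'x=x$. They do not depend on the quasi-uniformity, so they are again $x^*$ and $y^*$.

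\textbf{Step 3: apply the stability estimate in the conjugate setting.} Proposition~\ref{prop:stability} applied in $(X,\Uinv,\mathcal{D}^{-1})$ gives
\[
  d_\alpha^{-1}(x^*,y^*)\le(1-k_\alpha)^{-1}\sup_{x}d_\alpha^{-1}(Tx,T'x).
\]
Rewriting the left side as $d_\alpha(y^*,x^*)$ and the summand as $d_\alpha(T'x,Tx)$ yields the first displayed inequality, including the stated equality of the two suprema.

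The conjugate half of the same proposition gives
\[
  d_\alpha^{-1}(y^*,x^*)\le(1-k_\alpha)^{-1}\sup_{x}d_\alpha^{-1}(T'x,Tx).
\]
This rewrites as $d_\alpha(x^*,y^*)\le(1-k_\alpha)^{-1}\sup_x d_\alpha(Tx,T'x)$, which is the second inequality.

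\textbf{Step 4: compare with Proposition~\ref{prop:stability}.} Compare the two resulting bounds termwise with the forward and conjugate bounds already recorded in Proposition~\ref{prop:stability}. They are literally the same inequalities, which proves the final sentence of the statement.

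\textbf{Main obstacle.} There is no real obstacle; the only care needed is bookkeeping. Each swap $d_\alpha^{-1}(u,v)=d_\alpha(v,u)$ must be applied consistently both to the fixed-point pair and inside the supremum, so that the roles of $T$ and $T'$ are not inadvertently exchanged.

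A direct check is also available as a safeguard. Use the triangle inequality $d_\alpha(y^*,x^*)\le d_\alpha(T'y^*,T'x^*)+d_\alpha(T'x^*,Tx^*)$ and rearrange, exactly as in the original proof of Proposition~\ref{prop:stability}.
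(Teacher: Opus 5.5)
Your proposal is correct and follows the paper's proof. You transfer the hypotheses to $(X,\Uinv,\mathcal{D}^{-1})$ via Lemma~\ref{lem:conjugation}, apply Proposition~\ref{prop:stability} there, and rewrite with $d_\alpha^{-1}(u,v)=d_\alpha(v,u)$. The one cosmetic difference is in the second inequality: you get it from the conjugate half of the proposition in the dual setting, while the paper cites the forward bound of Proposition~\ref{prop:stability} in the original setting. These are the same inequality.
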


\begin{proof}
Apply Proposition~\ref{prop:stability} to
$(X,\Uinv,\mathcal{D}^{-1},T,T')$, using
Lemma~\ref{lem:conjugation}(a)--(c) to transfer the ambient and
contraction hypotheses. The bound
$d_\alpha^{-1}(x^*,y^*)\le(1-k_\alpha)^{-1}\sup_x d_\alpha^{-1}(Tx,T'x)$
so obtained rewrites as
$d_\alpha(y^*,x^*)\le(1-k_\alpha)^{-1}\sup_x d_\alpha(T'x,Tx)$
via $d_\alpha^{-1}(u,v)=d_\alpha(v,u)$, which is the first
displayed inequality; the second is the forward bound of
Proposition~\ref{prop:stability} itself.
\end{proof}

Corollaries~\ref{cor:contraction-endpoint-thm}--\ref{cor:stability-endpoint}
show that the fixed-point content of
Section~\ref{sec:contraction} is intrinsically symmetric under
$(\U,\mathcal{D})\leftrightarrow(\Uinv,\mathcal{D}^{-1})$: the
same fixed point $x^*$, the same per-direction geometric bounds,
the same Boyd--Wong extension and the same Lipschitz continuity
in the map are produced by either viewing. What differs between
the two viewings is only which of the two quasi-pseudometrics
$d_\alpha$ and $d_\alpha^{-1}$ carries the information; the
numerical asymmetry between them (evident in
Example~\ref{ex:picard-numerical}) does not translate into any
asymmetry of the abstract statements.

\begin{corollary}\label{cor:contraction-endpoint}
Under the hypotheses of Theorem~\ref{thm:contraction}, the unique
fixed point $x^*$ is simultaneously a $\U$-fixed point, a
$\U$-startpoint, and a $\U$-endpoint of $T$, witnessed by $z=x^*$.
\end{corollary}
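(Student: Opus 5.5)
The plan is to reduce everything to the equality $Tx^*=x^*$ supplied by Theorem~\ref{thm:contraction} and then read off the three relations from reflexivity of the preorder $\bigcap\U$. First, Theorem~\ref{thm:contraction} gives the unique fixed point $x^*$, a genuine equality in $X$.

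By axiom~(U1) every entourage contains $\diag$, so $\diag\subseteq\bigcap\U$. Substituting $Tx^*=x^*$ gives $(x^*,Tx^*)=(x^*,x^*)\in\bigcap\U$. This is exactly Definition~\ref{def:fixed-start-end}(a), so $x^*$ is a $\U$-fixed point.

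For the multivalued notions, I regard $T$ as the singleton-valued map $x\mapsto\{Tx\}$, as in Definition~\ref{def:fixed-start-end}(b). Then $z:=x^*\in\{Tx^*\}$, and $(x^*,z)=(x^*,x^*)\in\bigcap\U$ makes $x^*$ a $\U$-startpoint witnessed by $z=x^*$. The same pair, read as $(z,x^*)$, shows that $x^*$ is a $\U$-endpoint with the same witness.

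The endpoint clause can alternatively be quoted from Corollary~\ref{cor:contraction-endpoint-thm}, which obtains it through Lemma~\ref{lem:dual-startpoint} and the conjugation Lemma~\ref{lem:conjugation}. I would record that route in a sentence for consistency with the section, but the direct reflexivity argument is shorter. As a further consistency check, Proposition~\ref{prop:triv-single}(i) also follows immediately with $y=x^*$.

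There is no real obstacle here. The only point needing care is the interpretation step: the startpoint and endpoint notions are defined for maps into $2^X$, so the identification of a single-valued $T$ with its singleton-valued version must be stated explicitly. Everything else is immediate from (U1) and the equality $Tx^*=x^*$.
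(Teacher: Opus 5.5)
Your proposal is correct and matches the paper's proof: both take $Tx^*=x^*$ from Theorem~\ref{thm:contraction}, use axiom (U1) to place $(x^*,x^*)$ in $\bigcap\U$, and read that single pair as the $\U$-fixed-point, $\U$-startpoint and $\U$-endpoint relations with witness $z=x^*$. Your explicit identification of $T$ with the singleton-valued map $x\mapsto\{Tx\}$ spells out a step the paper leaves implicit when it writes $z=x^*\in Tx^*$.
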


\begin{proof}
The hypotheses of Theorem~\ref{thm:contraction} are satisfied:
$(X,\U)$ is $T_0$ bicomplete and generated by $\mathcal{D}$, and
$T$ is a $\mathcal{D}$-contraction. The theorem produces the
unique fixed point $x^*\in X$ with $Tx^*=x^*$.

\emph{$x^*$ is a $\U$-fixed point.} By
Definition~\ref{def:fixed-start-end}(a), this means
$(x^*,Tx^*)\in\bigcap\U$. Since $Tx^*=x^*$,
$(x^*,Tx^*)=(x^*,x^*)$. Axiom~(U1) of the quasi-uniformity
$\U$ asserts that every $U\in\U$ contains the diagonal
$\diag=\left\lbrace (y,y):y\in X\right\rbrace$, so
$(x^*,x^*)\in\diag\subseteq U$ for every $U\in\U$, and therefore
$(x^*,x^*)\in\bigcap\U$.

\emph{$x^*$ is a $\U$-startpoint and $\U$-endpoint.} Take
$z:=x^*\in Tx^*$ (using $Tx^*=x^*$, so $z\in Tx^*$ is
automatic). For the startpoint, we need
$(x^*,z)\in\bigcap\U$: this is the same point pair
$(x^*,x^*)\in\diag\subseteq\bigcap\U$ as above. For the
endpoint, we need $(z,x^*)\in\bigcap\U$: again the same
$(x^*,x^*)\in\bigcap\U$. By
Definition~\ref{def:fixed-start-end}(b), $x^*$ is both a
$\U$-startpoint and a $\U$-endpoint, witnessed by $z=x^*$.
\end{proof}

For a single-valued contraction, Corollary~\ref{cor:contraction-endpoint}
records what is essentially a terminological fact: a genuine
fixed point sits on the diagonal, and the diagonal lies in every
entourage, so it witnesses both startpoint and endpoint
conditions for free. The endpoint side acquires content only in
the multivalued setting, which the next section reaches through
the selector correspondence; see
Corollary~\ref{cor:multi-contraction}.

\subsection{Endpoint readings of the running examples}\label{subsec:endpoint-examples}

We now revisit two examples through the endpoint lens and
conclude with a remark on the collapse of the two notions when
$\U$ is symmetric.

\begin{example}[Endpoint reading of the Volterra example]\label{ex:volterra-endpoint}
Return to Example~\ref{ex:volterra} with the concrete instance
$K\equiv 1$, $\mu\equiv\tfrac12$, $\varphi\equiv 1$, so
$(Tf)(t)=1+\tfrac12\int_0^t f(s)\,ds$ on $X=C([0,1],\R_{\ge 0})$
with the upper quasi-pseudometric
$d(f,g)=\sup_t(f(t)-g(t))^+$. The unique fixed point is
$f^*(t)=e^{t/2}$. Read through
Corollary~\ref{cor:contraction-endpoint-thm}, the fixed point
$f^*$ is a $\U_d$-endpoint of $T$ witnessed by itself: this is
just the trivial diagonal witness of
Corollary~\ref{cor:contraction-endpoint}. The substantive
information sits in the endpoint Picard bound
(Corollary~\ref{cor:picard-endpoint}) applied to the seed
$f_0\equiv 0$. Since $Tf_0\equiv 1$,
\[
  d(Tf_0,f_0)=\sup_t(1-0)^+=1,\qquad
  d(f_0,Tf_0)=\sup_t(0-1)^+=0,
\]
so Corollary~\ref{cor:picard-endpoint} yields the two bounds
\[
  d(f^*,T^n f_0)\le\frac{\left(\tfrac{1}{2}\right)^n}{1-\tfrac{1}{2}}\cdot 1=2^{-(n-1)},
  \qquad
  d(T^n f_0,f^*)\le\frac{\left(\tfrac{1}{2}\right)^n}{1-\tfrac{1}{2}}\cdot 0=0.
\]
The first bound is the non-trivial one. It records that the
Picard iterates $T^n f_0$, which are the truncations of the
Taylor series of $e^{t/2}$, approach the fixed point $f^*$ from
below in the pointwise ordering of $[0,1]\to\R_{\ge 0}$; the
forward pseudodistance $d(T^n f_0,f^*)$ is identically zero at
every $n$, so the entire information about the rate of Picard
convergence lives in the conjugate direction. This is precisely
the endpoint direction. In the language of
Corollary~\ref{cor:contraction-endpoint-thm}, each $T^n f_0$ is
witnessed as an approximate $\U_d$-endpoint of $T$ (endpoint at
distance $2^{-(n-1)}$), whereas its status as an approximate
$\U_d$-startpoint is unconstrained by the theorem beyond the
trivial bound $0\le 0$. The endpoint side of the theorem is
therefore not a duplicate of its startpoint side: on any concrete
map whose iterates approach the fixed point one-sidedly, one of
the two sides carries all the numerical content and the other is
vacuous, with the assignment of ``carrier'' vs.\ ``vacuous''
determined by the direction of the approach relative to the
asymmetry of $d$.
\end{example}

\begin{example}[Multivalued endpoints on the unit interval]\label{ex:endpoint-multivalued}
Let $X=[0,1]$ with the upper quasi-pseudometric of
Example~\ref{ex:asymmetric-norm} and consider the multivalued map
$T\colon X\to 2^X$ given by $T(x):=[0,\tfrac{x}{2}]$. For any $x_0\in[0,1]$
and any $z\in T(x_0)\subseteq[0,\tfrac{x_0}{2}]\subseteq[0,x_0]$, the
$\le_\U$-relation $z\le_\U x_0$ holds because the preorder is the
standard order on $[0,1]$. Hence every $x_0\in[0,1]$ is a
$\U$-endpoint of $T$, witnessed by any $z\in T(x_0)$. The set of
endpoint witnesses of $x_0$ equals all of $T(x_0)=[0,\tfrac{x_0}{2}]$, so
endpoint witnesses come in continuum families even though $T$ has
the simple linear structure $T(x)=[0,\tfrac{x}{2}]$. Applying
Lemma~\ref{lem:dual-startpoint}, the same data viewed in $\Uinv$
yields a $\Uinv$-startpoint problem, and the witness sets of the
two perspectives are conjugate to each other.
\end{example}

\begin{remark}[Collapse under symmetry]\label{rem:symmetric-collapse}
When $\U$ is a uniformity, i.e.\ $\U=\Uinv$ (equivalently, every
$U\in\U$ contains its inverse $U^{-1}$), the preorder $\bigcap\U$
coincides with its converse $(\bigcap\U)^{-1}=\bigcap\Uinv$, so
the two membership relations
$(x_0,z)\in\bigcap\U$ and $(z,x_0)\in\bigcap\U$ are the same
condition on the same witness $z\in Tx_0$. Consequently every
$\U$-startpoint of $T$ is a $\U$-endpoint witnessed by the same
$z$, and \emph{vice versa}; the two notions coincide. The
distinction between startpoints and endpoints is therefore a
strictly asymmetric phenomenon of quasi-uniformities that are
not uniformities. When $\U$ is $T_0$ and a uniformity, the two
notions moreover collapse to the ordinary fixed-point notion
$z=x_0\in Tx_0$, because a $T_0$ uniformity satisfies
$\bigcap\U=\diag$ (the intersection of a Hausdorff uniformity is
the diagonal). The examples of this section and of
Section~\ref{sec:contraction} are chosen so that the asymmetry
of the generating family $\mathcal{D}$ is essential, so that the
endpoint theorems above carry information beyond the classical
Banach-principle content available on the symmetrisation
$(X,\Us)$ alone.
\end{remark}

\section{Selectors and the multivalued lift}\label{sec:selector}

The contraction theorem is stated for single-valued maps. The
standard way to extend such a theorem to multivalued maps is to
equip $2^X$ with a quasi-Hausdorff distance and prove a Nadler-type
analogue~\cite{Nadler}; this is the route taken
in~\cite{GabaKarapinarPetruselRadenovic2020} for weakly contractive
multi-valued maps on left $K$-complete $T_0$ quasi-metric
spaces, where the Hausdorff quasi-pseudometric on
$\mathscr{P}_{cb}(X)$ produces start-points directly. We take a
lighter route. A \emph{selector}
of a multivalued map $T$ is a single-valued map $f$ with $f(x)\in
Tx$, and the $\U$-fixed points of $f$ are automatically
$\U$-startpoints of $T$. Every single-valued contraction-type
theorem therefore yields a multivalued startpoint theorem applied
to a selector, and the next section uses this device to reduce a
common-startpoint problem for several multivalued maps to a
common-fixed-point problem for their selectors.

A \emph{selector} of $T\colon X\to 2^X$, with $Tx\neq\emptyset$ for
all $x$, is a map $f\colon X\to X$ with $f(x)\in Tx$ for every
$x$. When $Tx\neq\emptyset$ for every $x$, selectors exist in
abundance by the axiom of choice; the question is whether $T$
admits a selector with a useful regularity property (contractive,
continuous, monotone, measurable). The next two examples make
selectors concrete, and the example after that shows what
happens when the standing assumption $Tx\neq\emptyset$ fails.

\begin{example}[Three selectors of a triangular map]\label{ex:selectors-triangular}
On $X=[0,1]$ equipped with the upper quasi-pseudometric
$d(x,y)=\max\left\lbrace x-y,0 \right \rbrace$ of Example~\ref{ex:asymmetric-norm}, so
that $\bigcap\U$ is the standard order on $[0,1]$, define the
multivalued map $T(x):=[0,x]$, which sends each $x$ to all values
from $0$ up to and including $x$ itself. Three natural selectors
are
\[
  f_{\text{bot}}(x):=0,\qquad
  f_{\text{half}}(x):=\frac{x}{2},\qquad
  f_{\text{id}}(x):=x.
\]
Each picks one point out of $T(x)=[0,x]$ at every $x\in X$: the
bottom of the interval, its midpoint, and its top respectively.
Selectors of $T$ are therefore far from unique even when $T$ is as
simple as a one-parameter family of intervals; selecting a
particular one (the contractive one, $f_{\text{half}}$) is what
the contraction-theorem hypothesis of
Corollary~\ref{cor:multi-contraction} prescribes. The
selector $f_{\text{id}}(x)=x$ is non-contractive and witnesses
every $x\in[0,1]$ as a startpoint; $f_{\text{bot}}(x)=0$ is the
constant selector and produces $0$ as its sole fixed point;
$f_{\text{half}}(x)=\tfrac{x}{2}$ is the $\tfrac12$-contraction selected by
Theorem~\ref{thm:contraction}, with unique fixed point $0$.
\end{example}

\begin{example}[A selector picking the closer branch]\label{ex:selectors-branch}
On $X=\R$ with the upper quasi-pseudometric
$d(x,y)=\max\left\lbrace x-y,0 \right \rbrace$ of Example~\ref{ex:asymmetric-norm},
consider the two-branch map
\[
  T(x):=\left\lbrace \tfrac{x}{3},\,\tfrac{x}{3}+1\right \rbrace,
\]
which sends $x$ to two distinct points, one shifted by $+1$. The
``lower-branch'' selector $f_{\downarrow}(x):=\tfrac{x}{3}$ is a monotone
$\left\lbrace d \right \rbrace$-contraction with constant $\tfrac13$ and unique fixed
point $0$; Corollary~\ref{cor:multi-contraction} then yields $0$
as a $\U$-startpoint of $T$. The ``upper-branch'' selector
$f_{\uparrow}(x):=\tfrac{x}{3}+1$ is also a monotone
$\left\lbrace d \right \rbrace$-contraction with constant $\tfrac13$, with unique fixed
point $\tfrac{3}{2}$, identifying $\tfrac{3}{2}$ as a second $\U$-startpoint of $T$
via a different selector. Mixed selectors $f_{?}(x)$ that switch
branches as $x$ varies are still selectors of $T$, but need not
be contractive: $f_?(x):=\tfrac{x}{3}$ for $x\le 0$ and $f_?(x):=\tfrac{x}{3}+1$
for $x>0$ has a jump at $0$ and is not $\left\lbrace d \right \rbrace$-Lipschitz in any
neighbourhood of $0$. The example illustrates that picking
\emph{which} selector matters: $T$ has many selectors, only some
of which are contractive, and the contractive ones can pick out
distinct startpoints.
\end{example}

\begin{example}[No selector when a fibre is empty]\label{ex:no-selector}
The standing assumption $Tx\neq\emptyset$ for every $x\in X$ in
Proposition~\ref{prop:selector} is not cosmetic: as soon as a
single fibre is empty, no selector of $T$ can exist, because no
function $f\colon X\to X$ can satisfy $f(x_0)\in Tx_0=\emptyset$.
A concrete instance: on $X=[0,1]$ with the upper quasi-pseudometric
of Example~\ref{ex:asymmetric-norm}, define
\[
  T(x):=\begin{cases} [0,\tfrac{x}{2}] & x>0,\\ \emptyset & x=0.\end{cases}
\]
For every $x>0$ the fibre $T(x)$ is non-empty and the lower-half
map $x\mapsto \tfrac{x}{2}$ is a contractive selector on $(0,1]$; but at
$x_0=0$ the fibre is empty, so no $f\colon X\to X$ can satisfy
$f(0)\in T(0)$, and $T$ has no selector on the whole of $X$. The
startpoint question for $T$ is also vacuous at $x_0=0$: a
$\U$-startpoint at $0$ would require some
$z\in T(0)=\emptyset$ with $(0,z)\in\bigcap\U$, which is
impossible. Outside $\left\lbrace 0 \right \rbrace$, the contraction theorem applied to
the restriction $T\!\!\upharpoonright_{(0,1]}$ would seem to
produce a startpoint, but $(0,1]$ is not bicomplete (the Cauchy
sequence $\tfrac{1}{n}$ has no limit in $(0,1]$), so the conclusion
fails for the same reason as in Example~\ref{ex:non-bicomplete}.
The empty-fibre obstruction is therefore both a selector-theoretic
and a quasi-uniform obstruction, and the hypothesis
$Tx\neq\emptyset$ for all $x$ is the cleanest way to rule it out
at the source.
\end{example}

\begin{proposition}[folklore]\label{prop:selector}
Let $(X,\U)$ be a quasi-uniform space and $T\colon X\to 2^X$ with
$Tx\neq\emptyset$ for all $x$.
\begin{enumerate}[label=\textup{(\arabic*)}]
  \item If $f$ is a selector of $T$ and $x^*$ is a $\U$-fixed point
        of $f$, then $x^*$ is a $\U$-startpoint of $T$ witnessed by
        $f(x^*)$.
  \item Conversely, if $x^*$ is a $\U$-startpoint of $T$ witnessed by
        $z^*\in Tx^*$, then there is a selector $f$ of $T$ with
        $f(x^*)=z^*$, and $x^*$ is a $\U$-fixed point of $f$.
\end{enumerate}
In particular, $T$ has a $\U$-startpoint if and only if some selector
of $T$ has a $\U$-fixed point.
\end{proposition}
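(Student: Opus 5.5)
The argument unwinds Definition~\ref{def:fixed-start-end} directly: a $\U$-fixed point of a single-valued $f$ is a condition on the pair $(x^*,f(x^*))$, and a $\U$-startpoint of $T$ is the same condition on a pair $(x^*,z)$ with $z\in Tx^*$. The selector property $f(x)\in Tx$ matches the two.

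For part (1), let $f$ be a selector and $x^*$ a $\U$-fixed point of $f$, so that $(x^*,f(x^*))\in\bigcap\U$. Set $z:=f(x^*)$. Since $f$ is a selector, $z\in Tx^*$. The relation $(x^*,z)\in\bigcap\U$ is exactly the startpoint condition of Definition~\ref{def:fixed-start-end}(b). Hence $x^*$ is a $\U$-startpoint of $T$ witnessed by $f(x^*)$.

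For part (2), suppose $x^*$ is a $\U$-startpoint of $T$ witnessed by $z^*\in Tx^*$. The standing hypothesis $Tx\neq\emptyset$ for all $x$ makes $\prod_{x\in X}Tx$ non-empty by the axiom of choice, so there is a function $g\colon X\to X$ with $g(x)\in Tx$ for every $x$. Modify $g$ at the single point $x^*$:
\[
  f(x):=\begin{cases} z^*, & x=x^*,\\ g(x), & x\neq x^*.\end{cases}
\]
Then $f$ is a selector of $T$, because $z^*\in Tx^*$, and $f(x^*)=z^*$. Moreover $(x^*,f(x^*))=(x^*,z^*)\in\bigcap\U$, so $x^*$ is a $\U$-fixed point of $f$.

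The ``in particular'' clause follows by combining the two parts. Part (1) shows that any selector with a $\U$-fixed point yields a $\U$-startpoint of $T$, and part (2) shows that any $\U$-startpoint of $T$ yields a selector with a $\U$-fixed point. The only step with any content is the existence of a background selector $g$ in part (2). This is where the non-emptiness hypothesis is used, as Example~\ref{ex:no-selector} shows it must be. The choice of $g$ is global, and it cannot be avoided in general when $X$ is arbitrary. No structure of $\U$ beyond the definition of $\bigcap\U$ is used.
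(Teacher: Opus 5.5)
Your proof is correct and is essentially the paper's argument: part (1) unwinds Definition~\ref{def:fixed-start-end} using $f(x^*)\in Tx^*$, and part (2) takes an arbitrary choice-function selector and redefines it at the single point $x^*$ to take the value $z^*$. The ``in particular'' clause is obtained, as in the paper, by combining (1) and (2).
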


\begin{proof}
(1) Since $f$ is a selector of $T$, $f(x^*)\in Tx^*$ by definition.
Since $x^*$ is a $\U$-fixed point of $f$, $(x^*,f(x^*))\in\bigcap\U$.
Setting $z^*:=f(x^*)$ gives $z^*\in Tx^*$ and
$(x^*,z^*)\in\bigcap\U$, which is precisely the assertion that
$x^*$ is a $\U$-startpoint of $T$ witnessed by $z^*=f(x^*)$.

(2) The family $\left\lbrace Tx \right \rbrace_{x\in X}$ consists of non-empty sets by
hypothesis. The axiom of choice yields a function
$f_0\colon X\to X$ with $f_0(x)\in Tx$ for every $x\in X$; that
is, $f_0$ is a selector of $T$. Define $f\colon X\to X$ by
\[
  f(x):=\begin{cases} z^* & \text{if } x=x^*,\\
                       f_0(x) & \text{if } x\neq x^*. \end{cases}
\]
For $x\neq x^*$ one has $f(x)=f_0(x)\in Tx$; for $x=x^*$ one has
$f(x^*)=z^*\in Tx^*$ by hypothesis. Hence $f$ is a selector of
$T$. Moreover $(x^*,f(x^*))=(x^*,z^*)\in\bigcap\U$ because $z^*$
witnesses $x^*$ as a $\U$-startpoint of $T$, so $x^*$ is a
$\U$-fixed point of $f$.

The ``in particular'' clause follows by combining (1) and (2):
if some selector of $T$ has a $\U$-fixed point, (1) yields a
$\U$-startpoint of $T$; conversely, if $T$ has a $\U$-startpoint,
(2) produces a selector of $T$ with a $\U$-fixed point.
\end{proof}

\begin{remark}[On selector existence]\label{rem:selector-existence}
Part~(2) of Proposition~\ref{prop:selector} uses only the axiom
of choice and produces a \emph{set-theoretic} selector with no
regularity. The selector required by
Corollary~\ref{cor:multi-contraction} below is a
$\mathcal{D}$-contraction, a much stronger condition than mere
set-theoretic existence. We do not invoke any topological
selection theorem (Michael's theorem or its asymmetric
analogues) and instead verify the contraction property of a
specific selector in each application; see
Examples~\ref{ex:discrete-selector},
\ref{ex:continuous-selector},
and~\ref{ex:multi-formal-balls} for concrete instances. The
existence of a contractive selector is a real hypothesis on
$T$, not a consequence of the multivalued structure alone.
\end{remark}

\begin{corollary}\label{cor:multi-contraction}
Let $(X,\U)$ be a $T_0$ bicomplete quasi-uniform space generated by
$\mathcal{D}=\left\lbrace d_\alpha \right \rbrace$, and let $T\colon X\to 2^X$ have a
selector that is a $\mathcal{D}$-contraction. Then $T$ admits a
$\U$-startpoint (and a $\U$-endpoint).
\end{corollary}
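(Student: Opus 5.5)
Fix a selector $f$ of $T$ that is a $\mathcal{D}$-contraction with constants $\{k_\alpha\}\subseteq[0,1)$. The space $(X,\U)$ is $T_0$, bicomplete and generated by $\mathcal{D}$, so Theorem~\ref{thm:contraction} applies to $f$. It gives a unique $x^*\in X$ with $f(x^*)=x^*$, obtained as the $\Us$-limit of the Picard iterates $f^n x_0$.

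\emph{Startpoint.} Since $f(x^*)=x^*$, we have $(x^*,f(x^*))=(x^*,x^*)\in\diag$. Axiom~(U1) puts the diagonal inside every entourage, so $(x^*,f(x^*))\in\bigcap\U$, and $x^*$ is a $\U$-fixed point of $f$ in the sense of Definition~\ref{def:fixed-start-end}(a). Proposition~\ref{prop:selector}(1) then says that $x^*$ is a $\U$-startpoint of $T$ witnessed by $f(x^*)=x^*$. In particular $x^*\in Tx^*$, because $f$ is a selector.

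\emph{Endpoint.} There are two routes, and both need only a line. Directly, the witness $z=x^*\in Tx^*$ satisfies $(z,x^*)=(x^*,x^*)\in\bigcap\U$, so $x^*$ is a $\U$-endpoint. Alternatively, one can follow the conjugation principle as in Corollary~\ref{cor:contraction-endpoint-thm}. By Lemma~\ref{lem:conjugation}(a)--(c), $(X,\Uinv)$ is $T_0$, bicomplete and generated by $\mathcal{D}^{-1}$, and $f$ is a $\mathcal{D}^{-1}$-contraction. The startpoint argument above, run in $(X,\Uinv)$, shows that $x^*$ is a $\Uinv$-startpoint of $T$, and Lemma~\ref{lem:dual-startpoint} turns this into a $\U$-endpoint. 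Equivalently, one can apply Corollary~\ref{cor:contraction-endpoint} to $f$ and transfer the witness through $f(x^*)\in Tx^*$.

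No step is a genuine obstacle; the only care needed is bookkeeping. The witness $z$ must be taken in $Tx^*$ and not just in $\{f(x^*)\}$, which holds by the selector property. The uniqueness in Theorem~\ref{thm:contraction} refers to fixed points of $f$ only. It says nothing about uniqueness of $\U$-startpoints of $T$, and the corollary claims existence only; Example~\ref{ex:selectors-branch} shows that different contractive selectors can produce different startpoints.
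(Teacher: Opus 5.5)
Your proposal is correct and matches the paper's proof essentially step by step: apply Theorem~\ref{thm:contraction} to the contractive selector $f$ to get $f(x^*)=x^*$, use (U1) to place $(x^*,x^*)$ in $\bigcap\U$, and take $z=x^*=f(x^*)\in Tx^*$ as the witness for both the $\U$-startpoint and the $\U$-endpoint. The conjugation route you offer as an alternative is also valid, but it is unnecessary, because the diagonal witness already settles the endpoint claim directly.
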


\begin{proof}
By hypothesis there is a single-valued map $f\colon X\to X$ with
$f(x)\in Tx$ for every $x\in X$ (so $f$ is a selector of $T$, and
in particular $Tx\neq\emptyset$ for every $x$), and there is a
family $\left\lbrace k_\alpha:\alpha\in A \right \rbrace\subseteq[0,1)$ such that, for
every $\alpha\in A$ and every $x,y\in X$,
\[
  d_\alpha\left(f(x),f(y)\right)\le k_\alpha\,d_\alpha(x,y)
  \quad\text{and}\quad
  d_\alpha\left(f(y),f(x)\right)\le k_\alpha\,d_\alpha(y,x).
\]
That is, $f$ is a $\mathcal{D}$-contraction in the sense of
Definition~\ref{def:contraction}.

\emph{Existence of a fixed point of $f$.} The space $(X,\U)$ is
$T_0$, bicomplete, and generated by $\mathcal{D}$ by hypothesis,
so Theorem~\ref{thm:contraction} applies to $f$ and yields a
point $x^*\in X$ with $f(x^*)=x^*$.

\emph{$x^*$ is a $\U$-startpoint of $T$.} From $f(x^*)=x^*$ we
have $(x^*,f(x^*))=(x^*,x^*)$. Since every entourage $U\in\U$
contains the diagonal $\diag$ by axiom~(U1), the pair $(x^*,x^*)$
lies in every $U\in\U$, hence in $\bigcap\U$. Therefore
$(x^*,f(x^*))\in\bigcap\U$, which says that $x^*$ is a
$\U$-fixed point of $f$ in the sense of
Definition~\ref{def:fixed-start-end}(a). Setting $z^*:=f(x^*)$,
we have $z^*\in Tx^*$ (because $f$ is a selector of $T$) and
$(x^*,z^*)\in\bigcap\U$. By
Definition~\ref{def:fixed-start-end}(b), $x^*$ is a
$\U$-startpoint of $T$, witnessed by $z^*=f(x^*)=x^*$. This is
also the conclusion of Proposition~\ref{prop:selector}(1),
applied here for clarity.

\emph{$x^*$ is also a $\U$-endpoint of $T$.} The witness is
again $z^*=x^*\in Tx^*$. We must check $(z^*,x^*)\in\bigcap\U$.
But $(z^*,x^*)=(x^*,x^*)\in\diag\subseteq\bigcap\U$ by the same
argument as above. By Definition~\ref{def:fixed-start-end}(b),
$x^*$ is a $\U$-endpoint of $T$, witnessed by $z^*=x^*$. This is
the multivalued specialisation of
Corollary~\ref{cor:contraction-endpoint} via the selector
correspondence.
\end{proof}

Corollary~\ref{cor:multi-contraction} is the quasi-uniform analogue
of Nadler's multivalued contraction theorem, with the Hausdorff
distance machinery replaced by a single contractive selector. We
illustrate it in three settings of increasing complexity: a
discrete countable space (Example~\ref{ex:discrete-selector}), a
function space (Example~\ref{ex:continuous-selector}), and the
formal-ball construction (Example~\ref{ex:multi-formal-balls}).

\subsection{The startpoint set and its witnesses}\label{subsec:witness-set}

The definition of a $\U$-startpoint mixes two pieces of data: the
point $x_0$ and the witness $z\in Tx_0$ realising
$(x_0,z)\in\bigcap\U$. Either can vary independently of the other.
Two natural sets attach to a multivalued $T\colon X\to 2^X$ on a
quasi-uniform space $(X,\U)$:
\begin{align*}
  W(x) &:= \left\lbrace z\in Tx:(x,z)\in\bigcap\U\right \rbrace,\\
  \mathrm{Start}(T) &:= \left\lbrace x\in X:W(x)\neq\emptyset\right \rbrace.
\end{align*}
We call $W(x)$ the \emph{witness set} of $x$ and
$\mathrm{Start}(T)$ the \emph{$\U$-startpoint set} of $T$.

\begin{proposition}[Structure of witness sets]\label{prop:witness-structure}
Let $(X,\U)$ be a quasi-uniform space and $T\colon X\to 2^X$ with
$Tx\neq\emptyset$ for every $x$.
\begin{enumerate}[label=\textup{(\arabic*)}]
  \item For each $x\in X$, $W(x)=Tx\cap[x{,}\,\cdot)_\U$, where
        $[x{,}\,\cdot)_\U:=\left\lbrace z\in X:x\le_\U z \right \rbrace$ is the upset of
        $x$ in the preorder.
  \item $W(x)$ is closed under $\le_\U$-greater elements within
        $Tx$: if $z\in W(x)$, $z'\in Tx$, and $z\le_\U z'$, then
        $z'\in W(x)$.
  \item $x\in\mathrm{Start}(T)$ if and only if $Tx$ meets the upset
        of $x$, and the witnesses of $x$ are precisely the
        elements of $Tx$ in this upset.
\end{enumerate}
\end{proposition}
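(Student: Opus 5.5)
All three parts will be obtained by unwinding the definitions of $W(x)$ and $\mathrm{Start}(T)$. The only structural input is that $\bigcap\U$ is a preorder, and in particular transitive. This is the fact recorded in the Preliminaries (citing~\cite{Nachbin}), and I will re-derive it from axiom~(U2) rather than simply quote it.

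For part~(1), I will argue by double inclusion. An element $z$ lies in $W(x)$ exactly when $z\in Tx$ and $(x,z)\in\bigcap\U$. By the notation $x\le_\U z$, the second condition says $z\in[x{,}\,\cdot)_\U$. Hence $W(x)=Tx\cap[x{,}\,\cdot)_\U$, with both inclusions immediate.

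Part~(2) is the only step with any content, namely transitivity of $\le_\U$. Let $z\in W(x)$, $z'\in Tx$, and $z\le_\U z'$. Fix an arbitrary $U\in\U$ and use axiom~(U2) to choose $V\in\U$ with $V\circ V\subseteq U$. Since $(x,z)\in\bigcap\U\subseteq V$ and $(z,z')\in\bigcap\U\subseteq V$, we get $(x,z')\in V\circ V\subseteq U$. As $U$ was arbitrary, $(x,z')\in\bigcap\U$, and together with $z'\in Tx$ this gives $z'\in W(x)$ by part~(1). This is the same composition trick as in the proof of Proposition~\ref{prop:triv-single}, so I expect no real obstacle here. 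The one point to watch is that the witness $z'$ must be required to lie in $Tx$; upward closure is only claimed relative to $Tx$.

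Part~(3) then follows from part~(1). By definition, $x\in\mathrm{Start}(T)$ means $W(x)\neq\emptyset$. By part~(1), this holds iff $Tx\cap[x{,}\,\cdot)_\U\neq\emptyset$, that is, iff $Tx$ meets the upset of $x$. The identification of the witnesses of $x$ with $Tx\cap[x{,}\,\cdot)_\U$ is part~(1) read against Definition~\ref{def:fixed-start-end}(b). The standing hypothesis $Tx\neq\emptyset$ is not actually needed for the argument; it only ensures that the objects are nondegenerate.
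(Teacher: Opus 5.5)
Your proof is correct and follows the paper's argument essentially step for step: (1) unwinds the definition of $W(x)$, (2) rests on transitivity of $\le_\U$, and (3) restates (1). The only difference is that you re-derive transitivity of $\bigcap\U$ from axiom~(U2) via $V\circ V\subseteq U$, while the paper cites it directly; this is a harmless, slightly more self-contained refinement.
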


\begin{proof}
\emph{(1).} By definition of $W$, $z\in W(x)$ if and only if
$z\in Tx$ and $(x,z)\in\bigcap\U$. The relation
$(x,z)\in\bigcap\U$ is equivalent to $x\le_\U z$, which is
$z\in[x{,}\,\cdot)_\U$ by the notation introduced in the
statement. Therefore $z\in W(x)$ iff $z\in Tx$ and
$z\in[x{,}\,\cdot)_\U$, i.e.\ $z\in Tx\cap[x{,}\,\cdot)_\U$.

\emph{(2).} Suppose $z\in W(x)$, $z'\in Tx$, and $z\le_\U z'$.
From $z\in W(x)$, part~(1) gives $x\le_\U z$. Combined with
$z\le_\U z'$, transitivity of the preorder $\le_\U$ on $X$
yields $x\le_\U z'$. Hence $z'\in Tx\cap[x{,}\,\cdot)_\U$,
which by part~(1) is $W(x)$.

\emph{(3).} By definition $x\in\mathrm{Start}(T)$ means
$W(x)\neq\emptyset$, which by part~(1) is the same as
$Tx\cap[x{,}\,\cdot)_\U\neq\emptyset$, i.e.\ $Tx$ meets the
upset of $x$. The witnesses of $x$ are by definition the
elements of $W(x)$, and part~(1) identifies these with the
elements of $Tx$ in $[x{,}\,\cdot)_\U$.
\end{proof}

\begin{remark}[How many witnesses?]\label{rem:witness-count}
The size of $W(x_0)$ depends on $T$ and on the preorder. If $T$
is single-valued, $T(x)=\left\lbrace f(x) \right \rbrace$, then $|W(x_0)|\le 1$, and
equality holds exactly when $x_0$ is a $\U$-fixed point of $f$.
For the half-step map of Example~\ref{ex:start-vs-end} every
startpoint has a single witness, including $x=\tfrac12$, whose
witness is $1$. The multivalued formal-ball map of
Example~\ref{ex:multi-formal-balls} produces a one-parameter
family of witnesses at every startpoint: the witness condition
$(Sx_0,t)\in T(x_0,r_0)$ requires $0\le t\le kr_0$, while the
preorder $(x_0,r_0)\le_\mathbb{B}(Sx_0,t)$ requires
$t\le r_0-\rho(x_0,Sx_0)$, so
$W(x_0,r_0)=\left\lbrace (Sx_0,t):0\le t\le\min(kr_0,\,r_0-\rho(x_0,Sx_0)) \right \rbrace$.
When $\rho(x_0,Sx_0)\le(1-k)r_0$ the second constraint is the
weaker one and the witness range simplifies to $[0,kr_0]$. Witness multiplicity reflects the slack between
$T$ and the preorder, not just the multivaluedness of $T$.
\end{remark}

The startpoint set itself has a closure property under standard
topological hypotheses on $T$.

\begin{proposition}[Closure of the startpoint set]\label{prop:start-closed}
Let $(X,\U)$ be a quasi-uniform space whose preorder graph
$\bigcap\U$ is closed in $X\times X$ for the symmetric topology of
$\Us$. Let $T\colon X\to 2^X$ have $\Us$-closed graph
$\mathrm{Gr}(T):=\left\lbrace (y,z):z\in Ty \right \rbrace\subseteq X\times X$. Suppose
that for every $\Us$-convergent sequence $x_n\to x$ in
$\mathrm{Start}(T)$, there is a choice of witnesses $z_n\in W(x_n)$
admitting a $\Us$-convergent subsequence in $X$. Then
$\mathrm{Start}(T)$ is sequentially closed in $\Us$.
\end{proposition}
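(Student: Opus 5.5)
Take a sequence $(x_n)$ in $\mathrm{Start}(T)$ with $x_n\to x$ in $\Us$; the goal is to show $x\in\mathrm{Start}(T)$, that is, to produce some $z\in Tx$ with $(x,z)\in\bigcap\U$. The argument will be a direct limit passage. First pick witnesses and extract a convergent subsequence. Then use closedness of the graph to place the limit witness in $Tx$. Finally use closedness of the preorder graph to transfer the relation $x_n\le_\U z_n$ to the limit.

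\emph{Step 1 (witnesses and subsequence).} By Proposition~\ref{prop:witness-structure}(3), each $W(x_n)$ is non-empty. The hypothesis supplies witnesses $z_n\in W(x_n)$ and a subsequence $(z_{n_j})$ with $z_{n_j}\to z$ in $\Us$ for some $z\in X$. Convergence passes to subsequences, so also $x_{n_j}\to x$ in $\Us$.

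\emph{Step 2 (membership $z\in Tx$).} Each pair $(x_{n_j},z_{n_j})$ lies in $\mathrm{Gr}(T)$, because $z_{n_j}\in Tx_{n_j}$. In the product topology of $\Us\times\Us$ on $X\times X$, these pairs converge to $(x,z)$, since convergence in a product is coordinatewise. The graph $\mathrm{Gr}(T)$ is $\Us$-closed, hence sequentially closed. Therefore $(x,z)\in\mathrm{Gr}(T)$, that is, $z\in Tx$.

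\emph{Step 3 (the relation $x\le_\U z$).} By Proposition~\ref{prop:witness-structure}(1), $(x_{n_j},z_{n_j})\in\bigcap\U$ for every $j$. The same pairs converge to $(x,z)$ in $X\times X$. Since $\bigcap\U$ is closed, $(x,z)\in\bigcap\U$. Combining this with Step~2 gives $z\in Tx\cap[x,\cdot)_\U=W(x)$, so $x\in\mathrm{Start}(T)$.

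No step presents a real obstacle; the one thing to check is the meaning of the topology. "Closed in $X\times X$ for the symmetric topology of $\Us$" must be read as closed in the product topology induced by $\Us$ on each factor, so that coordinatewise convergence of sequences implies convergence in $X\times X$. If the intended topology were instead the topology of the product uniformity $\Us\times\Us$, the two coincide, so the argument is the same. Note also that only sequential closedness of both graphs is used. The closure hypotheses could therefore be weakened to sequential closedness, and the conclusion is correspondingly only sequential.
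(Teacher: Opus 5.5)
Your proof is correct and follows essentially the same approach as the paper's: choose witnesses $z_n\in W(x_n)$, pass to a $\Us$-convergent subsequence, use the closed graph of $T$ to get $z\in Tx$, and use closedness of $\bigcap\U$ to get $(x,z)\in\bigcap\U$, so $z\in W(x)$. Your version adds the bookkeeping the paper leaves implicit, namely that $x_{n_j}\to x$ along the subsequence and that convergence in the product topology is coordinatewise.
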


\begin{proof}
Let $x_n\in\mathrm{Start}(T)$ with $x_n\to x$ in $\Us$. By
hypothesis, choose witnesses $z_n\in W(x_n)\subseteq Tx_n$ and pass
to a subsequence with $z_n\to z$ in $\Us$. Closedness of
$\mathrm{Gr}(T)$ gives $z\in Tx$, and closedness of $\bigcap\U$
gives $(x,z)\in\bigcap\U$. Hence $z\in W(x)$ and
$x\in\mathrm{Start}(T)$.
\end{proof}

\begin{remark}\label{rem:start-closed-hypotheses}
The hypothesis that $\bigcap\U$ is closed in $X\times X$ holds, for
instance, when the closed entourages of $\U$ form a base for $\U$
(which they do for every quasi-uniformity~\cite[Lemma~1.18]{Kunzi}).
The witness-convergence hypothesis is automatic when $X$ is
$\Us$-compact or when $T$ takes values in a fixed $\Us$-compact
set. In Examples~\ref{ex:discrete-selector},
\ref{ex:continuous-selector} and~\ref{ex:multi-formal-balls},
$\mathrm{Start}(T)$ is closed and contains the fixed point of the
relevant selector.
\end{remark}

\subsection{A Knaster--Tarski companion under monotonicity}\label{subsec:knaster-tarski}

The selector route reaches startpoints via Cauchy iteration. A
complementary route is available when $T$ has a $\le_\U$-monotone
selector and $(X,\le_\U)$ has the right order-theoretic structure:
in that case $\mathrm{Start}(T)$ behaves like a Knaster--Tarski
fixed-point set.

\begin{proposition}\label{prop:knaster-tarski}
Let $(X,\U)$ be a quasi-uniform space such that $(X,\le_\U)$ is a
complete lattice. Suppose $T\colon X\to 2^X$ has a selector
$f\colon X\to X$ that is $\le_\U$-monotone, in the sense that
$x\le_\U y$ implies $f(x)\le_\U f(y)$. Then the set
\[
  \mathrm{Pre}(f):=\left\lbrace x\in X:x\le_\U f(x) \right \rbrace
\]
of pre-fixed-points of $f$ is contained in $\mathrm{Start}(T)$, has
a $\le_\U$-greatest element $\bar x$ given by the Knaster--Tarski
formula~\cite{Tarski}
$\bar x=\bigvee\left\lbrace x\in X:x\le_\U f(x) \right \rbrace$, and $\bar x$ satisfies
$\bar x\equiv_\U f(\bar x)$, so it is a $\U$-startpoint of $T$
witnessed by $f(\bar x)$. When $\U$ is $T_0$, $\bar x$ is moreover
a fixed point of $f$ in the ordinary sense, $\bar x=f(\bar x)$.
\end{proposition}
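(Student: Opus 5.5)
The plan is to run the classical Knaster--Tarski argument in the preordered set $(X,\le_\U)$, keeping track of the fact that $\le_\U$ is only a preorder. I read ``$(X,\le_\U)$ is a complete lattice'' as saying that every subset $S\subseteq X$ has a least upper bound $\bigvee S$. Such a bound is determined only up to the equivalence $\equiv_\U$, where $x\equiv_\U y$ means $x\le_\U y$ and $y\le_\U x$, so I fix one choice of supremum. Throughout I write $P:=\mathrm{Pre}(f)$ and $\bar x:=\bigvee P$.

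\emph{Step 1: $P\subseteq\mathrm{Start}(T)$.} If $x\in P$ then $x\le_\U f(x)$, that is, $(x,f(x))\in\bigcap\U$. So $x$ is a $\U$-fixed point of the selector $f$. Proposition~\ref{prop:selector}(1) then makes $x$ a $\U$-startpoint of $T$ witnessed by $f(x)$.

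\emph{Step 2: $\bar x\in P$.}
\begin{itemize}
\item For any $x\in P$ we have $x\le_\U\bar x$, so monotonicity gives $f(x)\le_\U f(\bar x)$.
\item Transitivity applied to $x\le_\U f(x)\le_\U f(\bar x)$ gives $x\le_\U f(\bar x)$.
\item Hence $f(\bar x)$ is an upper bound of $P$. Since $\bar x$ is a least upper bound, $\bar x\le_\U f(\bar x)$, i.e.\ $\bar x\in P$.
\end{itemize}
Because $\bar x$ is also an upper bound of $P$, it is a $\le_\U$-greatest element of $P$. Any other greatest element is $\equiv_\U$-equivalent to it.

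\emph{Step 3: $\bar x\equiv_\U f(\bar x)$.}
\begin{itemize}
\item Applying monotonicity to $\bar x\le_\U f(\bar x)$ gives $f(\bar x)\le_\U f(f(\bar x))$, so $f(\bar x)\in P$.
\item Therefore $f(\bar x)\le_\U\bar x$.
\item Together with Step 2 this yields $\bar x\equiv_\U f(\bar x)$.
\end{itemize}
In particular $(\bar x,f(\bar x))\in\bigcap\U$, so Step 1 shows that $\bar x$ is a $\U$-startpoint of $T$ witnessed by $f(\bar x)\in T\bar x$.

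\emph{Step 4: the $T_0$ case.} If $\U$ is $T_0$, then $\bigcap\U$ is antisymmetric. The two relations $\bar x\le_\U f(\bar x)$ and $f(\bar x)\le_\U\bar x$ then force $\bar x=f(\bar x)$.

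The only delicate point is the bookkeeping caused by $\le_\U$ being a preorder rather than a partial order. ``Greatest element'' and the Knaster--Tarski formula must be read up to $\equiv_\U$, and every step must use only transitivity and the least-upper-bound property, never antisymmetry. I would make this convention explicit at the start of the proof. With it in place, every step is a one-line use of monotonicity or transitivity, and antisymmetry enters only in Step 4 through the $T_0$ hypothesis.
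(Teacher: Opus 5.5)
Your proof is correct and follows the paper's argument essentially verbatim: the paper also shows that $f(\bar x)$ bounds every pre-fixed-point (stated as closure of $\mathrm{Pre}(f)$ under suprema and then applied to $\mathrm{Pre}(f)$ itself), uses monotonicity to put $f(\bar x)$ back into $\mathrm{Pre}(f)$, concludes $\bar x\equiv_\U f(\bar x)$, and invokes antisymmetry only under $T_0$. Your explicit convention that suprema and greatest elements are determined only up to $\equiv_\U$ is a useful clarification the paper leaves implicit, and the paper's nonemptiness remark via $\bigwedge X$ is not needed in your version.
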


\begin{proof}
By definition, $x\le_\U f(x)$ says $(x,f(x))\in\bigcap\U$ and
$f(x)\in Tx$, so $f(x)\in W(x)$ and $x\in\mathrm{Start}(T)$. This
gives $\mathrm{Pre}(f)\subseteq\mathrm{Start}(T)$. The set
$\mathrm{Pre}(f)$ is non-empty (it contains the bottom element
$\bot:=\bigwedge X$ of the complete lattice $(X,\le_\U)$, since
$\bot\le_\U f(\bot)$ holds trivially) and closed under existing
suprema in $(X,\le_\U)$
because $f$ is $\le_\U$-monotone: if $\left\lbrace x_i\right\rbrace\subseteq\mathrm{Pre}(f)$
and $x:=\bigvee_i x_i$, then $x_i\le_\U x$ implies
$x_i\le_\U f(x_i)\le_\U f(x)$ for all $i$, so
$x=\bigvee_i x_i\le_\U f(x)$ and $x\in\mathrm{Pre}(f)$. The
supremum $\bar x:=\bigvee\mathrm{Pre}(f)$ therefore exists in
$X$ (by completeness of the lattice) and lies in
$\mathrm{Pre}(f)$ (by the closure under suprema just
established), so $\bar x$ is the $\le_\U$-greatest element of
$\mathrm{Pre}(f)$ and satisfies $\bar x\le_\U f(\bar x)$. By
monotonicity, $f(\bar x)\le_\U f(f(\bar x))$, so
$f(\bar x)\in\mathrm{Pre}(f)$ as well; since $\bar x$ is the
greatest element of $\mathrm{Pre}(f)$,
$f(\bar x)\le_\U\bar x$. Hence $\bar x\equiv_\U f(\bar x)$, and
if $\U$ is $T_0$, antisymmetry gives $\bar x=f(\bar x)$.
\end{proof}

\begin{remark}\label{rem:knaster-tarski-vs-banach}
Proposition~\ref{prop:knaster-tarski} requires no completeness on
$\U$ itself; it trades the bicompleteness of
Theorem~\ref{thm:contraction} for the order-completeness of
$(X,\le_\U)$. The two results cover different regimes:
Theorem~\ref{thm:contraction} works on contractive selectors
without any order-completeness, and
Proposition~\ref{prop:knaster-tarski} works on monotone selectors
without any contraction. In the formal-ball setting of
Example~\ref{ex:formal-balls}, $(\mathbb{B}(M),\le_{\mathbb{B}})$
is not a complete lattice in general, so the proposition does not
apply there; in Example~\ref{ex:start-vs-end}, $(X,\le_\U)=([0,1],\le)$
is a complete lattice but the half-step map has no monotone
selector. The Knaster--Tarski companion finds its natural ground
in domain-theoretic settings where $(X,\le_\U)$ is a continuous
lattice.
\end{remark}

\begin{example}[The Knaster--Tarski companion in action]\label{ex:knaster-tarski-example}
On $X=[0,1]$ with the upper quasi-pseudometric of
Example~\ref{ex:asymmetric-norm}, the preorder $\le_\U$ is the
standard order, and $([0,1],\le)$ is a complete lattice. Consider
the genuinely multivalued map
\[
  T(x):=\left[\min\left\lbrace x+\tfrac14,1 \right \rbrace,\;
                 \min\left\lbrace x+\tfrac12,1 \right \rbrace\right],
\]
which sends each $x\in[0,1]$ to the non-degenerate closed
subinterval of $[0,1]$ (a singleton only at $x=1$, where
$T(1)=\left\lbrace 1 \right \rbrace$). The lower-endpoint rule
$f(x):=\min\left\lbrace x+\tfrac14,1 \right \rbrace$ is a selector of $T$: for
each $x$, $f(x)\in T(x)$ by construction. The map $f$ is monotone
nondecreasing but not a strict $\left\lbrace d \right \rbrace$-contraction, since for
$x,y\in[0,\tfrac34]$ with $x>y$ one has $d(f(x),f(y))=(x-y)=d(x,y)$.
Theorem~\ref{thm:contraction} therefore does not apply. The set of
pre-fixed-points $\mathrm{Pre}(f)$ equals all of $[0,1]$: for
$x\in[0,\tfrac34]$, $f(x)=x+\tfrac14>x$ gives $x\le_\U f(x)$, and
for $x\in[\tfrac34,1]$, $f(x)=1\ge x$ gives the same. The
$\le_\U$-greatest element of $\mathrm{Pre}(f)$ is $\bar x=1$, and
indeed $f(1)=1\in T(1)$; Proposition~\ref{prop:knaster-tarski}
identifies $\bar x=1$ as a $\U$-startpoint of $T$ witnessed by
$1\in T(1)$. The Knaster--Tarski route reaches the fixed point
without any Cauchy estimate; order-completeness replaces
$\U$-completeness when monotonicity is available.

The multivalued content is genuine: for $x\in[0,\tfrac12)$ the
image $T(x)=[x+\tfrac14,x+\tfrac12]$ is a non-degenerate interval,
and the upper-endpoint rule
$g(x):=\min\left\lbrace x+\tfrac12,1 \right \rbrace$ is a second monotone
selector of $T$, with $g(1)=1$ and $\mathrm{Pre}(g)=[0,1]$ as
well, so Proposition~\ref{prop:knaster-tarski} also produces
$1$ as the greatest pre-fixed-point of $g$. Both selectors
witness the same $\U$-startpoint $\bar x=1$, but through
distinct elements of $T(x)$ along the way; the singleton
$T(x)$-content is only reached at the fixed point itself.
\end{example}

\begin{remark}[Tie-back to the contraction theorem]\label{rem:start-vs-contraction}
Whenever Theorem~\ref{thm:contraction} applies to a
$\mathcal{D}$-contractive selector $f$ of $T$, the fixed point
$x^*$ of $f$ lies in $\mathrm{Start}(T)$ with $f(x^*)\in W(x^*)$,
so the structure of the present subsection is inhabited in every
contractive setting.
\end{remark}

\begin{example}[A discrete selector on $\left\lbrace 0 \right \rbrace\cup\left\lbrace 2^{-n} \right \rbrace$]\label{ex:discrete-selector}
Let
\[
  X:=\left\lbrace 0 \right \rbrace\cup\left\lbrace 2^{-n}:n\in\N_0\right \rbrace\subseteq[0,1],
\]
endowed with the restriction of the upper quasi-pseudometric
$d(x,y):=\max\left\lbrace x-y,0 \right \rbrace$ on $\R$ (cf.\ Example~\ref{ex:asymmetric-norm}).
The set $X$ is countable, and the symmetrisation $d^s$ is the
standard Euclidean metric, in which $X$ is a closed subset of
$[0,1]$. Hence $(X,\U_d)$ is $T_0$ and bicomplete.

Define a multivalued map $T\colon X\to 2^X$ by
\[
  T(x):=\left\lbrace y\in X:y\le \tfrac{x}{2}\right \rbrace.
\]
For each $x\in X$, $\tfrac{x}{2}$ also lies in $X$ (since
$\tfrac{2^{-n}}{2}=2^{-(n+1)}$ and $\tfrac{0}{2}=0$), and the assignment
$f(x):=\tfrac{x}{2}$ is a selector of $T$. A direct computation gives, for
$x,y\in X$ with $x\ge y$,
\[
  d\left(f(x),f(y)\right)=\tfrac{x-y}{2}=\tfrac12\,d(x,y),
\]
and $d(f(y),f(x))=0=\tfrac12\,d(y,x)$, so $f$ is a monotone
$\left\lbrace d \right \rbrace$-contraction with constant $\tfrac12$.
Theorem~\ref{thm:contraction} produces the unique fixed point
$f^*=0$, and Corollary~\ref{cor:multi-contraction} identifies
$0\in X$ as a $\U$-startpoint of $T$, witnessed by
$f(0)=0\in T(0)=\left\lbrace 0 \right \rbrace$. The Picard sequence from $1\in X$ is
$1,\tfrac12,\tfrac14,\tfrac18,\dots$, converging geometrically to
$0$ in $\Us$. Other selectors of $T$ exist, for instance
$g(x):=0$, which is also a $\left\lbrace d \right \rbrace$-contraction (with constant $0$)
and produces the same startpoint.
\end{example}

\begin{example}[A continuous selector on a function space]\label{ex:continuous-selector}
Let $X:=C([0,1],\R_{\ge0})$ be the space of continuous nonnegative
functions on $[0,1]$, and let
\[
  d(f,g):=\sup_{t\in[0,1]}\left(f(t)-g(t)\right)^+
\]
be the asymmetric sup quasi-pseudometric, where
$(\cdot)^+:=\max\left\lbrace \cdot,0 \right \rbrace$. The symmetrisation
$d^s(f,g)=\sup_t|f(t)-g(t)|$ is the standard sup metric, in which
$X$ is closed in $C([0,1],\R)$. Hence $(X,\U_d)$ is $T_0$ and
bicomplete.

Fix $\alpha\in[0,1)$ and define a multivalued
$T\colon X\to 2^X$ by
\[
  T(f):=\left\lbrace g\in X:0\le g(t)\le\alpha\,f(t)\;\text{for all }t\in[0,1]\right \rbrace.
\]
The map $\sigma\colon X\to X$, $\sigma(f):=\alpha f$, is a
selector of $T$: $\alpha f$ is continuous and nonnegative, and
$\alpha f(t)\le\alpha f(t)$ trivially. A direct computation gives
\[
  d\left(\sigma(f),\sigma(g)\right)
  =\sup_t\left(\alpha f(t)-\alpha g(t)\right)^+
  =\alpha\,d(f,g),
\]
and the conjugate inequality is identical. So $\sigma$ is a
$\left\lbrace d \right \rbrace$-contraction with constant $\alpha$.
Theorem~\ref{thm:contraction} gives the unique fixed point
$f^*\equiv 0\in X$, and Corollary~\ref{cor:multi-contraction}
identifies $f^*\equiv 0$ as a $\U$-startpoint of $T$, witnessed by
$\sigma(0)=0\in T(0)=\left\lbrace 0 \right \rbrace$. As in the discrete example, the
selector is not unique: $\sigma_\beta(f):=\beta f$ for any
$\beta\in[0,\alpha]$ is also a contractive selector of $T$ with
the same fixed point.
\end{example}

\begin{example}[A multivalued formal-ball map]\label{ex:multi-formal-balls}
Let $(M,\rho)$ and $S\colon M\to M$ be as in
Example~\ref{ex:formal-balls}, with Banach fixed point $x^*\in M$
and contraction constant $k\in[0,1)$. Define a multivalued map
$T\colon\mathbb{B}(M)\to 2^{\mathbb{B}(M)}$ by
\[
  T(x,r):=\left\lbrace(Sx,t):0\le t\le k r\right \rbrace.
\]
The set $T(x,r)$ is the segment of formal balls centred at $Sx$
with radius at most $k r$, and is genuinely multivalued whenever
$r>0$. The single-valued map $f(x,r):=(Sx,k r)$ is a selector of
$T$ (it picks the maximal-radius element) and is the
$\left\lbrace d_{\mathbb{B}} \right \rbrace$-contraction of Example~\ref{ex:formal-balls}.
Corollary~\ref{cor:multi-contraction} therefore yields the
$\U_{\mathbb{B}}$-startpoint $(x^*,0)$, witnessed by
$f(x^*,0)=(x^*,0)\in T(x^*,0)$. The selector route avoids equipping
$2^{\mathbb{B}(M)}$ with a quasi-Hausdorff distance: a Nadler-type
or Mizoguchi--Takahashi~\cite{MizoguchiTakahashi} formulation would
need a two-sided quasi-Hausdorff functional on subsets of
$\mathbb{B}(M)$, with the forward and backward Hausdorff sides
interacting non-trivially with the formal-ball preorder.
\end{example}

\begin{example}[A multivalued Volterra inclusion]\label{ex:multi-volterra}
Let $X:=C([0,1],\R_{\ge 0})$ with the asymmetric sup
quasi-pseudometric
$d(f,g):=\sup_{t\in[0,1]}(f(t)-g(t))^+$ of
Example~\ref{ex:volterra}, so $(X,\U_d)$ is $T_0$ and
bicomplete. Fix $\varphi\in X$, a continuous kernel
$K\colon[0,1]^2\to[0,\infty)$ bounded by $K_{\max}$, and a continuous
$\mu\colon[0,1]\to[0,\infty)$ with $\sup_s\mu(s)\le L$ and
$K_{\max}L<1$. Consider a
\emph{set-valued} multiplier
$\Phi\colon[0,1]\times[0,\infty)\to 2^{[0,\infty)}\setminus\left\lbrace\emptyset\right\rbrace$
satisfying, for every $(s,u)\in[0,1]\times[0,\infty)$:
\begin{enumerate}[label=\textup{($\Phi_\arabic*$)}]
  \item $\Phi(s,u)$ is a non-empty closed subset of $[0,\mu(s)\,u]$;
  \item $\Phi$ is measurable in the sense of a Carath\'eodory
        multifunction (i.e.\ $s\mapsto\Phi(s,u)$ is
        Borel-measurable for each fixed $u$, and $u\mapsto\Phi(s,u)$
        is continuous in the Hausdorff sense for each fixed $s$);
  \item the pointwise infimum $u\mapsto\inf\Phi(s,u)$ is monotone
        nondecreasing and Lipschitz on $[0,\infty)$ with constant
        $\mu(s)$, uniformly in $s$.
\end{enumerate}
Define the multivalued Volterra operator $T\colon X\to 2^X$ by
\[
  T(f):=\left\lbrace g\in X\;:\;g(t)=\varphi(t)+\int_0^t K(t,s)\,\phi_s(f(s))\,ds,\
  \phi_s(f(s))\in\Phi(s,f(s))\text{ measurable in }s\right\rbrace,
\]
the set of continuous solutions obtained from arbitrary
measurable selections $\phi_s$ of $\Phi$. Under ($\Phi_2$), the
Kuratowski--Ryll-Nardzewski selection theorem yields a measurable
selection $\phi^\star\colon[0,1]\times[0,\infty)\to[0,\infty)$ of
$\Phi$; taking $\phi^\star(s,u):=\inf\Phi(s,u)$ works when the
infimum is measurable in $s$ (which holds under ($\Phi_2$), by
closedness of the values). Define the single-valued operator
$F_\star\colon X\to X$ by
\[
  F_\star(f)(t):=\varphi(t)+\int_0^t K(t,s)\,\phi^\star(s,f(s))\,ds.
\]
By ($\Phi_3$), for $f,g\in X$,
\[
  |\phi^\star(s,f(s))-\phi^\star(s,g(s))|\le\mu(s)\,|f(s)-g(s)|,
\]
and the estimate of Example~\ref{ex:volterra} gives
$d(F_\star f,F_\star g)\le K_{\max}L\,d(f,g)$ together with the
conjugate bound. Hence $F_\star$ is a $\{d\}$-contraction with
constant $K_{\max}L<1$; it is a selector of $T$ because
$\phi^\star(s,f(s))\in\Phi(s,f(s))$ and $F_\star(f)\in X$ by
continuity of the integral. By
Corollary~\ref{cor:multi-contraction}, the inclusion $f\in Tf$
has a $\U_d$-startpoint, witnessed by the fixed point $f^*$ of
$F_\star$. The example shows that the selector route absorbs
multivalued integral inclusions of Volterra type whenever the
multivalued nonlinearity admits a measurable Lipschitz selection,
without requiring any quasi-Hausdorff functional on
subsets of $X$.
\end{example}

\subsection{A direct quasi-Hausdorff route}\label{subsec:hausdorff-direct}

The selector correspondence of Proposition~\ref{prop:selector}
goes through a single-valued auxiliary $f\colon X\to X$. An
alternative is to keep the multivalued $T$ in place and read the
contractive condition directly on it, through a family of
asymmetric Hausdorff functionals attached to
$\mathcal{D}=\left\lbrace d_\alpha \right \rbrace_{\alpha\in A}$. The
single-quasi-pseudometric, left $K$-complete instance of this
route is the start-point theorem of
Gaba--Karap{\i}nar--Petru\c{s}el--Radenovi\'c~\cite[Theorem~1]{GabaKarapinarPetruselRadenovic2020};
Theorem~\ref{thm:hausdorff-direct} below lifts that result to a
generating family on a bicomplete quasi-uniform space, with the
contractive condition made two-sided so that the iteration
produces a bi-Cauchy, rather than only left $K$-Cauchy,
sequence and lands inside the bicompleteness framework of the
rest of the paper.

We collect the three notions involved before stating the result.

\begin{definition}[$d_\alpha$-Hausdorff functional, family version]\label{def:hausdorff-family}
For each $\alpha\in A$ and each pair of non-empty subsets
$A,B\subseteq X$, the \emph{$d_\alpha$-Hausdorff functional} is
\[
  H_\alpha(A,B)
  :=\max\left\lbrace \sup_{a\in A}\inf_{b\in B}d_\alpha(a,b),\;
                \sup_{b\in B}\inf_{a\in A}d_\alpha(a,b)\right \rbrace
  \;\in\;[0,\infty].
\]
The single-quasi-pseudometric case
$\mathcal{D}=\left\lbrace q \right \rbrace$ recovers the Hausdorff quasi-pseudometric
$H$ of~\cite[\S 4]{GabaKarapinarPetruselRadenovic2020}. By
construction, $H_\alpha(B,A)$ is the $H_\alpha$ of the conjugate
$d_\alpha^{-1}$ applied to $(A,B)$, so the two arguments record
the forward and backward Hausdorff sides separately.
\end{definition}

\begin{definition}[$d_\alpha$-bounded subsets]\label{def:cb-family}
A non-empty $A\subseteq X$ is \emph{$d_\alpha$-bounded} if
$\sup_{a,a'\in A}d_\alpha(a,a')<\infty$; equivalently (since
the supremum is taken over ordered pairs), if it is
$d_\alpha^s$-bounded. Write
\[
  \mathscr{P}_{cb}^{(\alpha)}(X):=
  \left\lbrace A\subseteq X:A\neq\emptyset,\ A\text{ is }\Us\text{-closed and }d_\alpha\text{-bounded}\right \rbrace.
\]
For $A,B\in\mathscr{P}_{cb}^{(\alpha)}(X)$ the functional
$H_\alpha(A,B)$ is finite-valued. The use of $\Us$-closedness
(rather than the finer $\tau(d_\alpha)$-closedness) keeps the
class consistent with the bicompleteness framework, in which the
ambient convergence is in $\Us$; the single-$d$ specialisation
recovers the $\mathscr{P}_{cb}(X)$ of~\cite{GabaKarapinarPetruselRadenovic2020}
when $\U=\U_d$.
\end{definition}

\begin{definition}[$(c)^*$-comparison function~\cite{GabaKarapinarPetruselRadenovic2020}]\label{def:c-star-comparison}
A function $\gamma\colon[0,\infty)\to[0,\infty)$ is a
\emph{$(c)^*$-comparison function} if
\begin{enumerate}[label=$(\gamma_{\arabic*})^{\!*}$]
  \item $\gamma$ is non-decreasing, $\gamma(0)=0$, and $0<\gamma(t)<t$ for every $t>0$;
  \item for every sequence $(t_n)\subseteq(0,\infty)$,
        $\displaystyle\sum_{n=1}^{\infty}\gamma(t_n)<\infty$ implies
        $\displaystyle\sum_{n=1}^{\infty}t_n<\infty$.
\end{enumerate}
The linear functions $\gamma(t)=kt$ with $k\in[0,1)$ satisfy
both axioms (the second by direct comparison); the inclusion is
strict (Example~\ref{ex:c-star-non-linear}).
\end{definition}

\begin{definition}\label{def:weakly-contractive-multi}
A multivalued map
$T\colon X\to\bigcap_{\alpha\in A}\mathscr{P}_{cb}^{(\alpha)}(X)$
is \emph{$\mathcal{D}$-weakly contractive} if there is a family
$\left\lbrace \gamma_\alpha:\alpha\in A \right \rbrace$ of $(c)^*$-comparison functions
such that, for every $\xi\in X$, there exists $\eta\in T\xi$
satisfying
\begin{align}
  H_\alpha(\left\lbrace \eta \right \rbrace,T\eta)&\le d_\alpha(\xi,\eta)-\gamma_\alpha\left(d_\alpha(\xi,\eta)\right),\label{eq:weakly-contractive-multi}\\
  H_\alpha(T\eta,\left\lbrace \eta \right \rbrace)&\le d_\alpha(\eta,\xi)-\gamma_\alpha\left(d_\alpha(\eta,\xi)\right)\label{eq:weakly-contractive-multi-conj}
\end{align}
for every $\alpha\in A$.
\end{definition}

\begin{theorem}\label{thm:hausdorff-direct}
Let $(X,\U)$ be a $T_0$ bicomplete quasi-uniform space generated
by $\mathcal{D}=\left\lbrace d_\alpha\right\rbrace_{\alpha\in A}$,
and let
$T\colon X\to\bigcap_{\alpha\in A}\mathscr{P}_{cb}^{(\alpha)}(X)$
be $\mathcal{D}$-weakly contractive. Assume in addition that $T$
is \emph{lower $\Us$-hemicontinuous}: for every
$\Us$-convergent sequence $\xi_n\to\xi$ in $X$ and every
$z\in T\xi$, there is a sequence $z_n\in T\xi_n$ with $z_n\to z$
in $\Us$. Then there exists $\xi^*\in X$ with
$T\xi^*=\left\lbrace \xi^*\right\rbrace$; in particular $\xi^*$ is
a $\U$-startpoint, a $\U$-endpoint, and a $\U$-fixed point of $T$
in the singleton sense $\xi^*\in T\xi^*$.
\end{theorem}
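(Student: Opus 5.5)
I would follow the Picard-type iteration of \cite[Theorem~1]{GabaKarapinarPetruselRadenovic2020}, run simultaneously for every $d_\alpha$ and its conjugate. Fix $\xi_0\in X$. Using Definition~\ref{def:weakly-contractive-multi} repeatedly, choose $\xi_{n+1}\in T\xi_n$ realising both \eqref{eq:weakly-contractive-multi} and \eqref{eq:weakly-contractive-multi-conj} at $\xi=\xi_n$. The point of the construction is that the \emph{next} point $\xi_{n+2}$ also lies in $T\xi_{n+1}$. Since $H_\alpha(\{\eta\},T\eta)\ge\sup_{z\in T\eta}d_\alpha(\eta,z)$ (the second term of the max with $A=\{\eta\}$), we get, for every $z\in T\xi_{n+1}$ and in particular for $z=\xi_{n+2}$,
\[
d_\alpha(\xi_{n+1},z)\le d_\alpha(\xi_n,\xi_{n+1})-\gamma_\alpha\bigl(d_\alpha(\xi_n,\xi_{n+1})\bigr).
\]
Symmetrically, from $H_\alpha(T\eta,\{\eta\})\ge\sup_{z\in T\eta}d_\alpha(z,\eta)$ we get the same bound for $d_\alpha(z,\xi_{n+1})$ in terms of $d_\alpha(\xi_{n+1},\xi_n)$. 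Writing $a_n=d_\alpha(\xi_n,\xi_{n+1})$ and $b_n=d_\alpha(\xi_{n+1},\xi_n)$, this yields $a_{n+1}\le a_n-\gamma_\alpha(a_n)$ and $b_{n+1}\le b_n-\gamma_\alpha(b_n)$.

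\emph{Summability and bi-Cauchyness.} Telescoping gives $\sum_n\gamma_\alpha(a_n)\le a_0<\infty$. If some $a_n=0$, then all later $a_m$ vanish and the tail is trivial. Otherwise $(\gamma_2)^*$ yields $\sum_n a_n<\infty$, and likewise $\sum_n b_n<\infty$. The triangle inequality then makes $(\xi_n)$ Cauchy for each $d_\alpha$ and each $d_\alpha^{-1}$, hence bi-Cauchy. Bicompleteness gives $\xi_n\to\xi^*$ in $\Us$.

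\emph{Identification of the limit.} Fix $z\in T\xi^*$. By lower $\Us$-hemicontinuity there are $z_n\in T\xi_n$ with $z_n\to z$ in $\Us$. The displayed bound with $n$ shifted gives $d_\alpha(\xi_n,z_n)\le a_{n-1}\to0$ and $d_\alpha(z_n,\xi_n)\le b_{n-1}\to0$. Then
\[
d_\alpha(\xi^*,z)\le d_\alpha(\xi^*,\xi_n)+d_\alpha(\xi_n,z_n)+d_\alpha(z_n,z)\to0,
\]
and the same estimate holds for $d_\alpha(z,\xi^*)$. Hence $(\xi^*,z)$ and $(z,\xi^*)$ both lie in $\bigcap\U$, and the $T_0$ property forces $z=\xi^*$. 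Since $T\xi^*\neq\emptyset$, this proves $T\xi^*=\{\xi^*\}$. The startpoint, endpoint and $\U$-fixed-point claims then follow exactly as in Corollary~\ref{cor:contraction-endpoint}, using the diagonal witness $z=\xi^*$.

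The step I expect to need most care is the passage from the $H_\alpha$-inequalities to the pointwise bound valid for \emph{all} $z\in T\xi_{n+1}$. This uniformity is what lets the hemicontinuity-supplied $z_n$ be controlled, rather than only the chosen $\xi_{n+1}$. It rests on reading the sup-inf terms of $H_\alpha$ with a singleton argument as a plain supremum, and on the two-sided condition handling both directions at once. A minor point is the degenerate case $a_n=0$, because $(\gamma_2)^*$ is stated only for positive sequences; I would dispose of it separately as indicated above.
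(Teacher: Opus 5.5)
Your proposal is correct and follows essentially the same route as the paper: the same two-sided iteration with the singleton reading of $H_\alpha$, then telescoping plus $(\gamma_2)^*$ to get summability of $a_n$ and $b_n$, then bicompleteness, and finally lower hemicontinuity combined with $d_\alpha(\xi_n,z_n)\le a_{n-1}$, $d_\alpha(z_n,\xi_n)\le b_{n-1}$ and $T_0$ to collapse $T\xi^*$ to $\{\xi^*\}$. Your separate treatment of the case where some $a_n=0$ covers a point the paper's proof passes over, since $(\gamma_2)^*$ is stated only for positive sequences; your triangle-inequality estimate is an equivalent substitute for the paper's joint-continuity step.
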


\begin{proof}
\emph{Construction of the iteration.} Fix $\xi_0\in X$. By
\eqref{eq:weakly-contractive-multi}--\eqref{eq:weakly-contractive-multi-conj}
applied at $\xi_0$, there exists $\xi_1\in T\xi_0$ such that, for
every $\alpha\in A$,
\[
  H_\alpha(\left\lbrace \xi_1 \right \rbrace,T\xi_1)\le d_\alpha(\xi_0,\xi_1)-\gamma_\alpha\left(d_\alpha(\xi_0,\xi_1)\right),
\]
together with the conjugate bound. Recursively, applying both
inequalities at $\xi_n$ produces $\xi_{n+1}\in T\xi_n$ with
\begin{align*}
  d_\alpha(\xi_{n+1},\xi_{n+2})
  &\le H_\alpha(\left\lbrace \xi_{n+1} \right \rbrace,T\xi_{n+1})
  \le d_\alpha(\xi_n,\xi_{n+1})-\gamma_\alpha\left(d_\alpha(\xi_n,\xi_{n+1})\right),\\
  d_\alpha(\xi_{n+2},\xi_{n+1})
  &\le H_\alpha(T\xi_{n+1},\left\lbrace \xi_{n+1} \right \rbrace)
  \le d_\alpha(\xi_{n+1},\xi_n)-\gamma_\alpha\left(d_\alpha(\xi_{n+1},\xi_n)\right).
\end{align*}

\emph{Bi-Cauchyness.} Fix $\alpha\in A$ and write
$a_n:=d_\alpha(\xi_n,\xi_{n+1})$ and
$b_n:=d_\alpha(\xi_{n+1},\xi_n)$. The estimates above give
$a_{n+1}\le a_n-\gamma_\alpha(a_n)$ and
$b_{n+1}\le b_n-\gamma_\alpha(b_n)$. Both sequences are
non-increasing and bounded below by $0$, hence convergent to some
limits $\ell,\ell'\ge 0$.

Telescoping the inequality
$\gamma_\alpha(a_n)\le a_n-a_{n+1}$ across $n=0,1,\dots,N-1$
gives $\sum_{n=0}^{N-1}\gamma_\alpha(a_n)\le a_0-a_N\le a_0$, so
$\sum_{n}\gamma_\alpha(a_n)\le a_0<\infty$. In particular
$\gamma_\alpha(a_n)\to 0$. Since $a_n\ge\ell$ and $\gamma_\alpha$
is non-decreasing, $\gamma_\alpha(\ell)\le\gamma_\alpha(a_n)$ for
every $n$, so $\gamma_\alpha(\ell)\le 0$, which by axiom
$(\gamma_1)^*$ of Definition~\ref{def:c-star-comparison} forces
$\ell=0$. The same argument on $(b_n)$ gives $\ell'=0$. This step
uses only the monotonicity of $\gamma_\alpha$ and the strict
positivity $\gamma_\alpha(t)>0$ for $t>0$; no right
upper-semicontinuity is required.

From $\sum_n\gamma_\alpha(a_n)<\infty$ and the $(c)^*$-summability
axiom $(\gamma_2)^*$, $\sum_n a_n<\infty$. Hence
$d_\alpha(\xi_n,\xi_m)\le\sum_{k=n}^{m-1}a_k\to 0$ as
$n\to\infty$, uniformly in $m>n$, so $(\xi_n)$ is left $K$-Cauchy
in $d_\alpha$. The analogous argument on $(b_n)$ uses the
conjugate iteration inequality (the second line of
Definition~\ref{def:weakly-contractive-multi}) and the
$(c)^*$-summability of the same $\gamma_\alpha$ applied to
$(b_n)$, delivering right $K$-Cauchyness of $(\xi_n)$ in
$d_\alpha$. Since this holds for every $\alpha\in A$, $(\xi_n)$
is bi-Cauchy in $(X,\Us)$, and bicompleteness yields
$\xi^*\in X$ with $\xi_n\to\xi^*$ in $\Us$.

We note for the record that each $T\xi_n$ lies in
$\bigcap_{\alpha\in A}\mathscr{P}_{cb}^{(\alpha)}(X)$ by the
standing hypothesis on the range of $T$, so every
$H_\alpha(\left\lbrace \xi_n\right\rbrace,T\xi_n)$ is
finite-valued and the iteration step is well-defined.

\emph{$T\xi^*=\left\lbrace \xi^*\right\rbrace$.} Fix $\alpha\in A$
and $z\in T\xi^*$. By lower $\Us$-hemicontinuity of $T$, there is
a sequence $z_n\in T\xi_n$ with $z_n\to z$ in $\Us$. The map
$(\xi,z)\mapsto d_\alpha(\xi,z)$ is $\Us$-continuous jointly in
both arguments: the asymmetric triangle gives
$|d_\alpha(\xi,z)-d_\alpha(\xi',z')|\le d_\alpha^{s}(\xi,\xi')+d_\alpha^{s}(z,z')$
for all $\xi,\xi',z,z'\in X$, and $\Us$-convergence is by
definition convergence in every $d_\alpha^s$. Applied to
$\xi_n\to\xi^*$ and $z_n\to z$, this gives
$d_\alpha(\xi_n,z_n)\to d_\alpha(\xi^*,z)$ as $n\to\infty$. For every $n$,
$d_\alpha(\xi_n,z_n)\le\sup_{w\in T\xi_n}d_\alpha(\xi_n,w)
=H_\alpha(\left\lbrace \xi_n\right\rbrace,T\xi_n)$, and the
right-hand side is at most
$a_{n-1}:=d_\alpha(\xi_{n-1},\xi_n)\to 0$ by the iteration
bound established in the previous step. Hence
$d_\alpha(\xi^*,z)=\lim_n d_\alpha(\xi_n,z_n)=0$. Since
$z\in T\xi^*$ and $\alpha\in A$ were arbitrary,
$H_\alpha(\left\lbrace \xi^*\right\rbrace,T\xi^*)=\sup_{z\in T\xi^*}d_\alpha(\xi^*,z)=0$
for every $\alpha$. The same argument applied to the conjugate
functional
$\xi\mapsto H_\alpha(T\xi,\left\lbrace \xi\right\rbrace)$, using the
contractive bound on $d_\alpha(z_n,\xi_n)$, gives
$H_\alpha(T\xi^*,\left\lbrace \xi^*\right\rbrace)=0$ for every $\alpha$. Unpacking, for
every $z\in T\xi^*$ and every $\alpha\in A$,
$d_\alpha(\xi^*,z)=d_\alpha(z,\xi^*)=0$, so
$(\xi^*,z),(z,\xi^*)\in\bigcap\U$. Antisymmetry of $\bigcap\U$
(the $T_0$ hypothesis) forces $z=\xi^*$. Hence
$T\xi^*\subseteq\left\lbrace \xi^* \right \rbrace$, and since $T\xi^*$ is non-empty by
the standing hypothesis on the range of $T$, $T\xi^*=\left\lbrace \xi^* \right \rbrace$.

The conclusions $\xi^*\in T\xi^*$, $(x^*,Tx^*)\in\bigcap\U$ and
$(Tx^*,x^*)\in\bigcap\U$ are immediate from $T\xi^*=\left\lbrace \xi^* \right \rbrace$.
\end{proof}

\begin{remark}[Relation to Corollary~\ref{cor:multi-contraction} and to~\cite{GabaKarapinarPetruselRadenovic2020}]\label{rem:hausdorff-vs-selector}
Theorem~\ref{thm:hausdorff-direct} and
Corollary~\ref{cor:multi-contraction} are not nested. The
corollary asks for the existence of a contractive single-valued
selector $f$ of $T$ (a structural property of $f$); the theorem
asks for a $\mathcal{D}$-weakly contractive condition on $T$
itself, working directly with the family
$\left\lbrace H_\alpha \right \rbrace_{\alpha\in A}$ on subsets. The two routes deliver
different conclusions: the corollary produces a $\U$-startpoint
at the fixed point of $f$, with witness $f(x^*)=x^*$, but $T$
may have other points in its image $Tx^*$; the theorem produces
a $\xi^*$ at which $T\xi^*$ collapses to the singleton
$\left\lbrace \xi^* \right \rbrace$, a strictly stronger conclusion at the cost of a
strictly stronger hypothesis on $T$.

The single-quasi-pseudometric, left $K$-complete case
$\mathcal{D}=\left\lbrace d \right \rbrace$ recovers
\cite[Theorem~1]{GabaKarapinarPetruselRadenovic2020} on the
forward side: the one-sided condition
\eqref{eq:weakly-contractive-multi} alone, combined with left
$K$-completeness rather than bicompleteness, is enough to
deliver a start-point in the
\cite{GabaKarapinarPetruselRadenovic2020} sense
($H(\left\lbrace \xi^* \right \rbrace,T\xi^*)=0$), because the left $K$-Cauchy sequence
produced by the iteration converges in the left-completeness
topology without invoking the conjugate inequality. The two-sided
condition above is what lets the conclusion be lifted to
bicompleteness and stated symmetrically in
$\U$ and $\Uinv$.
\end{remark}

\begin{example}[A non-linear $(c)^*$-comparison function]\label{ex:c-star-non-linear}
The $(c)^*$-comparison class strictly contains the Banach class
$\left\lbrace \gamma(t)=kt:k\in[0,1) \right \rbrace$. The function
$\gamma\colon[0,\infty)\to[0,\infty)$ defined by
$\gamma(t):=\tfrac{t}{1+t}$ is a $(c)^*$-comparison function: it is
non-decreasing (derivative $\tfrac{1}{(1+t)^2}>0$), $\gamma(0)=0$, and
$\gamma(t)<t$ for $t>0$ since $1<1+t$. For the summability
implication, if
$\sum_n\gamma(t_n)=\sum_n \tfrac{t_n}{1+t_n}<\infty$ then $t_n\to 0$,
so for $n$ large enough that $t_n\le 1$ we have
$\tfrac{t_n}{1+t_n}\ge \tfrac{t_n}{2}$, giving
$\sum_n t_n\le 2\sum_n \tfrac{t_n}{1+t_n}+\text{(finite tail)}<\infty$.
The ratio $\tfrac{\gamma(t)}{t}=\tfrac{1}{1+t}$ tends to $1$ as $t\to 0^+$, so
no constant $k<1$ dominates $\gamma$ on a neighbourhood of $0$:
the linear class is a proper subclass of the $(c)^*$-class.
\end{example}

\begin{example}[Theorem~\ref{thm:hausdorff-direct} on the half-interval map]\label{ex:hausdorff-direct-worked}
On $X=[0,1]$ with the upper quasi-pseudometric
$d(x,y):=\max\left\lbrace x-y,0 \right \rbrace$ of Example~\ref{ex:asymmetric-norm},
take the multivalued map $T(x):=[0,\tfrac{x}{2}]$ of
Example~\ref{ex:endpoint-multivalued}. Each $T(x)$ is
$d^s$-closed and $d^s$-bounded (a closed sub-interval of
$[0,1]$), so $T(x)\in\mathscr{P}_{cb}^{(d)}(X)$.

\emph{Computing $H$.} With $A=\left\lbrace \eta \right \rbrace$
a singleton, Definition~\ref{def:hausdorff-family} collapses:
$\sup_{a\in A}\inf_{b\in B}d(a,b)$ becomes just
$\inf_{b\in B}d(\eta,b)$ and is dominated by the second
component $\sup_{b\in B}d(\eta,b)$, so
$H(\left\lbrace \eta \right \rbrace,B)=\sup_{b\in B}d(\eta,b)$
whenever the supremum is finite. Applied to $B=T\eta=[0,\tfrac{\eta}{2}]$
this gives
\[
  H(\left\lbrace \eta \right \rbrace,T\eta)
  =\sup_{z\in[0,\tfrac{\eta}{2}]}d(\eta,z)
  =\sup_{z\in[0,\tfrac{\eta}{2}]}(\eta-z)^+
  =\eta,
\]
attained at $z=0$. Symmetrically,
$H(T\eta,\left\lbrace \eta \right \rbrace)=\sup_{z\in[0,\tfrac{\eta}{2}]}d(z,\eta)
=\sup_{z\in[0,\tfrac{\eta}{2}]}(z-\eta)^+=0$
(every $z\in[0,\tfrac{\eta}{2}]$ satisfies $z\le\tfrac{\eta}{2}\le\eta$).

\emph{Verifying $\left\lbrace d \right \rbrace$-weak contractivity with
$\gamma(t):=\tfrac{t}{2}$.} For each $\xi\in X$ we choose
$\eta:=\tfrac{\xi}{3}\in[0,\tfrac{\xi}{2}]=T\xi$. Then $H(\left\lbrace \eta \right \rbrace,T\eta)=\eta=\tfrac{\xi}{3}$,
$d(\xi,\eta)=(\xi-\tfrac{\xi}{3})^+=\tfrac{2\xi}{3}$, and
$\gamma(d(\xi,\eta))=\tfrac{\xi}{3}$, so the forward
inequality~\eqref{eq:weakly-contractive-multi} reads
$\tfrac{\xi}{3}\le \tfrac{2\xi}{3}-\tfrac{\xi}{3}$, holding with equality. The conjugate
inequality~\eqref{eq:weakly-contractive-multi-conj} is trivial:
$H(T\eta,\left\lbrace \eta \right \rbrace)=0$ and $d(\eta,\xi)=(\tfrac{\xi}{3}-\xi)^+=0$, so
$0\le 0-\gamma(0)=0$.

\emph{Conclusion.} Theorem~\ref{thm:hausdorff-direct} delivers
$\xi^*\in X$ with $T\xi^*=\left\lbrace \xi^* \right \rbrace$. Unpacking,
$[0,\xi^*/2]=\left\lbrace \xi^* \right \rbrace$ forces $\xi^*=0$ and $T(0)=\left\lbrace 0 \right \rbrace$. The
selector route via $f(x)=\tfrac{x}{2}$
(Corollary~\ref{cor:multi-contraction}) reaches the same point
$0$, but the conclusion delivered by
Theorem~\ref{thm:hausdorff-direct} is strictly stronger: it
identifies $0$ as the unique point where the entire image
collapses to a singleton, not merely as a point with at least one
startpoint witness. The choice $\eta=\tfrac{\xi}{3}$ above is essentially
the unique one that produces equality in both bounds; smaller
choices of $\eta$ (e.g.\ $\eta=\tfrac{\xi}{4}$) verify both inequalities
strictly, with the same conclusion.
\end{example}

\begin{remark}[The conjugate line is often vacuous on a single upper quasi-pseudometric]\label{rem:conjugate-vacuous}
Example~\ref{ex:hausdorff-direct-worked} illustrates a structural
feature of Definition~\ref{def:weakly-contractive-multi} on a single
upper quasi-pseudometric $d(x,y)=(x-y)^+$: whenever $T$ maps each
$\xi$ into a subset of $[0,\xi]$ (a downward contraction toward
$x^*=0$), the witness $\eta\in T\xi$ satisfies $\eta\le\xi$, so
$d(\eta,\xi)=0$ and $H(T\eta,\{\eta\})=0$, and the conjugate
inequality~\eqref{eq:weakly-contractive-multi-conj} reduces to
$0\le 0-\gamma(0)=0$. The forward
inequality~\eqref{eq:weakly-contractive-multi} carries all the
contractive content in this asymmetric single-qpm regime. Genuinely
two-sided instances of
Definition~\ref{def:weakly-contractive-multi} arise in two
natural settings: the multi-qpm family
$\mathcal{D}=\left\lbrace d_1,d_2\right\rbrace$ with
$d_1(x,y)=(x-y)^+$ and $d_2(x,y)=(y-x)^+$, where the two
qpms activate opposite direction bounds independently and a map
$T$ with image straddling the witness delivers non-trivial
constraints on both $H_1$-lines and both $H_2$-lines; and the
function-space setting of Example~\ref{ex:continuous-selector},
in which the asymmetric sup quasi-pseudometric on
$C([0,1],\R_{\ge 0})$ acts non-monotonically on set-valued
operators whose images are not $d$-ordered relative to the
witness. The single-qpm collapse is intrinsic to the asymmetric
geometry, and the definition remains well-posed.
\end{remark}

\begin{example}[Selector route applies, direct route does not]\label{ex:selector-yes-direct-no}
The hypotheses of Corollary~\ref{cor:multi-contraction} and
Theorem~\ref{thm:hausdorff-direct} are not equivalent. Modify
Example~\ref{ex:hausdorff-direct-worked} by adjoining a fixed
upper-edge element:
\[
  T'(x):=[0,\tfrac{x}{2}]\cup\left\lbrace 1 \right \rbrace,\qquad x\in[0,1].
\]
Each $T'(x)$ remains $d^s$-closed and $d^s$-bounded.

\emph{Corollary~\ref{cor:multi-contraction} applies.} The map
$f(x):=\tfrac{x}{2}$ is a selector of $T'$ (since
$\tfrac{x}{2}\in[0,\tfrac{x}{2}]\subseteq T'(x)$) and is the $\left\lbrace d \right \rbrace$-contraction
of Example~\ref{ex:asymmetric-norm} with constant $\tfrac{1}{2}$. Its
unique fixed point is $0$, and
Corollary~\ref{cor:multi-contraction} identifies $0$ as a
$\U$-startpoint of $T'$, witnessed by $f(0)=0\in T'(0)=\left\lbrace 0,1 \right \rbrace$.

\emph{Theorem~\ref{thm:hausdorff-direct} cannot apply.} Its
conclusion would force $T'(\xi^*)=\left\lbrace \xi^* \right \rbrace$ for some $\xi^*$.
But $T'(x)=[0,\tfrac{x}{2}]\cup\left\lbrace 1 \right \rbrace$ contains $1$ for every $x$, and
contains $0$ for every $x$, so $T'(x)$ is a singleton at no
$x\in[0,1]$. Tracing the hypothesis at $\xi=0$: with $\eta=0$, the
conjugate $H$-functional
$H(T'(0),\left\lbrace 0 \right \rbrace)$ is computed from
Definition~\ref{def:hausdorff-family} at
$A=T'(0)=\left\lbrace 0,1 \right \rbrace$ and $B=\left\lbrace 0 \right \rbrace$:
\[
  \sup_{a\in A}\inf_{b\in B}d(a,b)=\max\left\lbrace d(0,0),d(1,0) \right \rbrace=\max\left\lbrace 0,1 \right \rbrace=1,
\]
\[
  \sup_{b\in B}\inf_{a\in A}d(a,b)=\inf\left\lbrace d(0,0),d(1,0) \right \rbrace=\min\left\lbrace 0,1 \right \rbrace=0,
\]
so $H(T'(0),\left\lbrace 0 \right \rbrace)=\max\left\lbrace 1,0 \right \rbrace=1$. This must be bounded by
$d(0,0)-\gamma(0)=0$, which fails; with $\eta=1$, the forward
$H$-functional collapses to the same singleton pattern
$H(\left\lbrace 1 \right \rbrace,B)=\sup_{b\in B}d(1,b)$ observed above, giving
\[
  H(\left\lbrace 1 \right \rbrace,T'(1))=\sup_{z\in[0,\tfrac{1}{2}]\cup\left\lbrace 1 \right \rbrace}(1-z)^+=1,
\]
which must be bounded by $d(0,1)-\gamma(d(0,1))=0-\gamma(0)=0$
and also fails. So no choice of $\eta\in T'(0)$ satisfies the
weakly contractive condition at $\xi=0$.

The example pinpoints what the singleton conclusion of
Theorem~\ref{thm:hausdorff-direct} costs: it rules out target
images that retain ``persistent'' elements (here the constant
upper edge $1$) which the contractive condition cannot squeeze
away. The selector route of
Corollary~\ref{cor:multi-contraction} is more accommodating
because it only requires \emph{some} branch of $T'$ to be
contractive, not the whole graph.
\end{example}

\section{Common \texorpdfstring{$\U$}{U}-startpoints for finite families}\label{sec:common}

A standard strengthening of the startpoint problem asks for a single
point that serves as a startpoint for several multivalued maps at
once. In the metric setting this is the common-fixed-point question
addressed by Jungck~\cite{Jungck} and others. The selector
correspondence of the previous section reduces the quasi-uniform
version to the corresponding common-fixed-point question for
selectors. We treat finite families
$T_1,\dots,T_n$ in this section; an infinite-family extension is
possible under the additional hypothesis that the family is
\emph{equicontractive} (all $T_i$ share a common
$\mathcal{D}$-contraction constant $k_\alpha<1$ for every $\alpha$,
not just per-$i$), which guarantees that the common fixed point of
finite subfamilies stabilises as more maps are added.

A point $x^*\in X$ is a \emph{common $\U$-startpoint} of multivalued
maps $T_1,\dots,T_n\colon X\to 2^X$ if for each $i$ some $z_i\in T_i
x^*$ satisfies $(x^*,z_i)\in\bigcap\U$. Applying
Proposition~\ref{prop:triv-multi} pointwise gives:

\begin{proposition}\label{prop:common-trivial}
Let $(X,\U)$ be a quasi-uniform space, $T_1,\dots,T_n\colon X\to 2^X$
and $x^*\in X$ with $\bigcup_{i=1}^n T_i x^*\neq\emptyset$. The
following are equivalent:
\begin{enumerate}[label=\textup{(\roman*)}]
  \item For every $U\in\U$ and every $z\in\bigcup_{i=1}^n T_i x^*$,
        $U(x^*)\cap U^{-1}(z)\neq\emptyset$;
  \item $(x^*,z)\in\bigcap\U$ for every $z\in\bigcup_{i=1}^n T_i x^*$;
  \item $x^*$ is a common $\U$-startpoint of $T_1,\dots,T_n$ \emph{and}
        every choice of $z_i\in T_i x^*$ realises this.
\end{enumerate}
\end{proposition}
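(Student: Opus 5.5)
The plan is to reduce everything to Proposition~\ref{prop:triv-multi}. I will apply it to the auxiliary multivalued map $S\colon X\to 2^X$ defined by $Sx:=\bigcup_{i=1}^n T_i x$. Only its value at $x^*$ matters, and $Sx^*\neq\emptyset$ by hypothesis. With this choice, conditions (i) and (ii) of the present statement are verbatim conditions (i) and (ii) of Proposition~\ref{prop:triv-multi} for $S$ at $x^*$. So (i)$\Leftrightarrow$(ii) follows at once, with no new use of the axioms (U1)--(U2). If one prefers not to introduce $S$, the same two-line arguments go through directly. For (ii)$\Rightarrow$(i), take the witness $y=x^*$, using (U1). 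For (i)$\Rightarrow$(ii), given $W\in\U$, choose $V$ with $V\circ V\subseteq W$ via (U2), uniformly in $z$.

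Next I will prove (ii)$\Leftrightarrow$(iii) by unpacking the definition of a common $\U$-startpoint. For (iii)$\Rightarrow$(ii), let $z\in\bigcup_i T_i x^*$. Then $z\in T_j x^*$ for some $j$, so $z$ is an admissible choice $z_j$. By the ``every choice realises this'' clause of (iii), $(x^*,z)\in\bigcap\U$. For (ii)$\Rightarrow$(iii), any choice $z_i\in T_i x^*$ lies in the union, so (ii) gives $(x^*,z_i)\in\bigcap\U$. Thus every choice realises the startpoint relation.

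The only genuine obstacle is the existence part of (iii). Being a common $\U$-startpoint requires a witness $z_i\in T_i x^*$ for \emph{each} $i$, so it needs $T_i x^*\neq\emptyset$ for every $i$. The stated hypothesis only gives $\bigcup_i T_i x^*\neq\emptyset$. If some $T_j x^*$ is empty, then (ii) can hold while $x^*$ fails to be a common startpoint. I will therefore first check whether the intended hypothesis is $T_i x^*\neq\emptyset$ for all $i$, which matches the standing convention $Tx\neq\emptyset$ of Section~\ref{sec:selector}. Under that assumption, (ii)$\Rightarrow$(iii) is complete: pick any $z_i\in T_i x^*$, and it witnesses the startpoint relation. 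Under only the weaker union hypothesis, I will state the equivalence (i)$\Leftrightarrow$(ii) in full. I will then record that (ii) implies the second clause of (iii), and the first clause for exactly those $i$ with $T_i x^*\neq\emptyset$, and add a one-line remark to that effect.
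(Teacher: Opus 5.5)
Your reduction is the paper's own argument. The paper gives no separate proof: it obtains the proposition by applying Proposition~\ref{prop:triv-multi} pointwise, which your auxiliary map $Sx:=\bigcup_{i=1}^n T_i x$ makes precise for (i)$\Leftrightarrow$(ii), and (ii)$\Leftrightarrow$(iii) is the definitional unpacking you describe. Your caveat is also correct, and it is a genuine defect of the printed statement (and of the paper's one-line derivation), not of your proof. Take $T_1x^*=\{x^*\}$ and $T_2x^*=\emptyset$: conditions (i) and (ii) hold because $(x^*,x^*)\in\diag\subseteq\bigcap\U$, yet $x^*$ is not a common $\U$-startpoint. So the hypothesis must be strengthened to $T_ix^*\neq\emptyset$ for every $i$, exactly as you propose.
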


As in the single-map case, condition~(i) is just a restatement of
the conclusion. Producing a genuine common $\U$-startpoint requires
a structural input. Recall that a \emph{common fixed point} of a
family $f_1,\dots,f_n\colon X\to X$ is a point $x^*\in X$ satisfying
$f_i(x^*)=x^*$ for every $i$. Combining
Theorem~\ref{thm:contraction} with the selector correspondence
(Proposition~\ref{prop:selector}) gives:

\begin{theorem}\label{thm:common-existence}
Let $(X,\U)$ be a $T_0$ bicomplete quasi-uniform space generated by
$\mathcal{D}=\left\lbrace d_\alpha \right \rbrace$. For each $i=1,\dots,n$ let
$T_i\colon X\to 2^X$ be a multivalued map with $T_i x\neq\emptyset$
for all $x\in X$, and suppose $T_i$ admits a selector $f_i$
satisfying~\eqref{eq:contraction}, with unique fixed point
$x_i^*\in X$ given by Theorem~\ref{thm:contraction}. If the
selectors $f_1,\dots,f_n$ pairwise commute, then
$x_1^*=\cdots=x_n^*=:x^*$, and $x^*$ is a common $\U$-startpoint
of $T_1,\dots,T_n$ witnessed for each $i$ by $z_i=x^*\in T_i x^*$.
\end{theorem}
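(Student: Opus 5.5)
The plan is the classical commuting-maps argument, with uniqueness from Theorem~\ref{thm:contraction} as the engine. For each $i$, the selector $f_i$ is a $\mathcal{D}$-contraction on the $T_0$ bicomplete space $(X,\U)$. Theorem~\ref{thm:contraction} therefore gives it a unique fixed point $x_i^*$.

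\emph{Step 1: fixed points are permuted.} Fix $i\neq j$. From $f_i(x_i^*)=x_i^*$ and $f_j\circ f_i=f_i\circ f_j$ we get
\[
f_i\bigl(f_j(x_i^*)\bigr)=f_j\bigl(f_i(x_i^*)\bigr)=f_j(x_i^*).
\]
So $f_j(x_i^*)$ is a fixed point of $f_i$. Uniqueness in Theorem~\ref{thm:contraction} forces $f_j(x_i^*)=x_i^*$. Thus $x_i^*$ is a fixed point of $f_j$, and uniqueness for $f_j$ gives $x_i^*=x_j^*$. Since $i,j$ were arbitrary, all $x_i^*$ coincide; call the common value $x^*$. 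Only commutation of each pair $(f_i,f_j)$ is used, which is exactly the pairwise hypothesis.

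\emph{Step 2: the startpoint conclusion.} For each $i$ we have $f_i(x^*)=x^*$ with $f_i(x^*)\in T_ix^*$, so $z_i:=x^*\in T_ix^*$. By axiom~(U1), $(x^*,x^*)\in\diag\subseteq U$ for every $U\in\U$, hence $(x^*,z_i)\in\bigcap\U$. Equivalently, apply Proposition~\ref{prop:selector}(1) to each $f_i$, noting that a genuine fixed point is a $\U$-fixed point. Either way, $x^*$ is a common $\U$-startpoint of $T_1,\dots,T_n$, witnessed by $z_i=x^*$ for each $i$.

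There is no serious obstacle. The one step that needs care is Step~1, where uniqueness must be invoked for the right map: first for $f_i$, to place $f_j(x_i^*)$ back at $x_i^*$, and then for $f_j$, to identify $x_i^*$ with $x_j^*$. The $T_0$ hypothesis enters only through the uniqueness clause of Theorem~\ref{thm:contraction}. Without $T_0$, both the identification and uniqueness would hold only up to $\equiv_\U$.
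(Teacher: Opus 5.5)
Your proof is correct and is essentially the paper's argument: commutativity makes $f_j(x_i^*)$ a fixed point of $f_i$, uniqueness is invoked twice to get $x_i^*=x_j^*$, and the diagonal supplies the witness $z_i=x^*$ (the paper runs the same computation with the roles of $i$ and $j$ swapped). One small correction to your closing aside: $T_0$ is not used only in the uniqueness clause of Theorem~\ref{thm:contraction}. It is also used in the existence half, where Hausdorffness of $\Us$ is what turns the $\Us$-limit of the iterates into a genuine fixed point.
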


\begin{proof}
Fix indices $i\neq j$. Since $f_i$ and $f_j$ commute,
$f_j\left(f_i(x_j^*)\right)=f_i\left(f_j(x_j^*)\right)=f_i(x_j^*)$,
so $f_i(x_j^*)$ is a fixed point of $f_j$; by uniqueness of $f_j$'s
fixed point, $f_i(x_j^*)=x_j^*$. So $x_j^*$ is itself a fixed point
of $f_i$, and uniqueness of $f_i$'s fixed point now forces
$x_j^*=x_i^*$. Hence pairwise commutativity forces
$x_1^*=\cdots=x_n^*$; call the common fixed point $x^*$. Then
$f_i(x^*)=x^*\in T_i x^*$ for each $i$, and
$(x^*,x^*)\in\diag\subseteq\bigcap\U$, so $x^*$ is a common
$\U$-startpoint of $T_1,\dots,T_n$ with witness $z_i=x^*$.
\end{proof}

The next example exhibits Theorem~\ref{thm:common-existence} on a
familiar space and shows that commutativity is the structural
ingredient without which the conclusion fails.

\begin{example}[Two commuting affine selectors]\label{ex:common-affine}
Let $X=\R$ with the upper quasi-pseudometric $d(x,y)=\max\left\lbrace x-y,0 \right \rbrace$
of Example~\ref{ex:asymmetric-norm}, so that $(X,\U_d)$ is $T_0$ and
bicomplete. Fix constants $k_1,k_2\in[0,1)$ and a real number
$c\in\R$, and consider the multivalued maps
\begin{align*}
  T_1(x) &:= \left[\min\left\lbrace k_1 x,k_1 x+(1-k_1)c \right \rbrace,
                    \max\left\lbrace k_1 x,k_1 x+(1-k_1)c \right \rbrace\right],\\
  T_2(x) &:= \left[\min\left\lbrace k_2 x,k_2 x+(1-k_2)c \right \rbrace,
                    \max\left\lbrace k_2 x,k_2 x+(1-k_2)c \right \rbrace\right].
\end{align*}
The affine selectors $f_i(x):=k_i x+(1-k_i)c$ are monotone
nondecreasing $\left\lbrace d \right \rbrace$-contractions with constants $k_1,k_2$
respectively, since each is a Euclidean contraction with the same
constant and is monotone (cf.\ Example~\ref{ex:asymmetric-norm}).
Both have $c$ as their unique fixed point, and they commute:
$f_1\circ f_2(x)=k_1 k_2 x+(1-k_1 k_2)c=f_2\circ f_1(x)$.
Theorem~\ref{thm:common-existence} therefore yields the common
$\U$-startpoint $x^*=c$, witnessed for each $T_i$ by
$z_i=c\in T_i(c)$.
\end{example}

The next example exhibits a concrete instance in which
Theorem~\ref{thm:common-existence} fails to apply, and shows that
the failure mode is not a defect of the theorem but a genuine
obstruction in the data.

\begin{example}[Distinct fixed points without commutativity]\label{ex:common-sharp}
On $\R$ with the upper quasi-pseudometric of
Example~\ref{ex:asymmetric-norm}, consider the single-valued maps
\[
  f_1(x):=\tfrac12 x, \qquad f_2(x):=\tfrac12 x+1,
\]
viewed as selectors of the multivalued maps
$T_i(x):=\left\lbrace f_i(x) \right \rbrace$. Both $f_1$ and $f_2$ are monotone
nondecreasing $\left\lbrace d \right \rbrace$-contractions with constant $\tfrac12$, hence
Theorem~\ref{thm:contraction} gives them unique fixed points
$x_1^*=0$ and $x_2^*=2$. The selectors do not commute:
$f_1\circ f_2(x)=\tfrac14 x+\tfrac12$ while
$f_2\circ f_1(x)=\tfrac14 x+1$, so the commutativity hypothesis of
Theorem~\ref{thm:common-existence} fails and the
unique-common-fixed-point conclusion fails with it: $x_1^*\neq
x_2^*$. The common $\U$-startpoint set is not empty, however. A
candidate $x^*$ is a common $\U$-startpoint iff $x^*\le f_1(x^*)$
and $x^*\le f_2(x^*)$, i.e.\ $x^*\le x^*/2$ and $x^*\le x^*/2+1$;
the first inequality forces $x^*\le 0$, and both inequalities hold
at every $x^*\le 0$. So the common $\U$-startpoint set is
$(-\infty,0]$. The example illustrates the genuine asymmetry of
the situation: the asymmetric preorder $\le$ admits a continuum of
common $\U$-startpoints even when commutativity (and hence the
unique-common-fixed-point clause of
Theorem~\ref{thm:common-existence}) fails. The startpoint and the
fixed-point notions therefore have genuinely different
behaviour under failure of commutativity: startpoints persist,
fixed points need not coincide.
\end{example}

\section{The \texorpdfstring{$T_0$}{T0}-quotient and an involution theorem}\label{sec:quotient}

The involution theorem of~\cite[Theorem~2.3]{Gaba2018} promotes
a $\U$-fixed point of an order-preserving involution to an actual
fixed point, but only under the $T_0$ assumption: without $T_0$ the
preorder $\bigcap\U$ has non-trivial equivalence classes and a
$\U$-fixed point can fail to be one in the ordinary sense. The
$T_0$-quotient is exactly the device that handles this. Quotienting
$X$ by the equivalence $x\equiv_\U y$ that identifies points lying
in the same class collapses the obstruction, and the involution
theorem transfers to the quotient verbatim. The cost is that the
equality $T(x^*)=x^*$ on $X$ is replaced by an equivalence
$T(x^*)\equiv_\U x^*$, which is the natural sharpening of fixed
point in the absence of $T_0$.

For a quasi-uniform space $(X,\U)$, the relation
$x\equiv_\U y\iff (x,y)\in\bigcap\U\text{ and }(y,x)\in\bigcap\U$
is an equivalence on $X$. Let $\widetilde X:=X/\!\equiv_\U$ and
$\pi\colon X\to\widetilde X$ be the quotient map. The filter
$\widetilde\U$ on $\widetilde X\times\widetilde X$ generated by
the base
$\left\lbrace (\pi\times\pi)(U):U\in\U\right\rbrace$ is a
$T_0$-quasi-uniformity called the \emph{$T_0$-quotient}
of $(X,\U)$~\cite[\S2.4]{Kunzi}; the construction satisfies
\begin{equation}\label{eq:quotient-characterisation}
  \pi(x)\le_{\widetilde\U}\pi(y)\iff(x,y)\in\bigcap\U
\end{equation}
and reduces to the identity when $\U$ is already $T_0$. Beyond
the standard treatment in~\cite{Kunzi}, the only step we need to
isolate for Lemma~\ref{lem:descent} is the (U2)-axiom verification
on $\widetilde\U$: given $W\in\widetilde\U$, pick $U\in\U$ with
$(\pi\times\pi)(U)\subseteq W$ and, by three applications of (U2)
in $\U$, choose $V\in\U$ with $V^{\circ 8}\subseteq U$ (hence
also $V^{\circ 5}\subseteq U$). If
$(\pi(x),\pi(z))\in(\pi\times\pi)(V)\circ(\pi\times\pi)(V)$, an
intermediate $\widetilde y$ unpacks to preimages with
$(x_0,y_1),(y_2,z_0)\in V$ and
$x_0\equiv_\U x,\ y_1\equiv_\U y_2,\ z_0\equiv_\U z$, so
$(x,x_0),(y_1,y_2),(z_0,z)\in\bigcap\U\subseteq V$. Composition
along the chain $(x,x_0,y_1,y_2,z_0,z)$, five $V$-steps in all,
gives $(x,z)\in V^{\circ 5}\subseteq U$, and hence
$(\pi(x),\pi(z))\in(\pi\times\pi)(U)\subseteq W$. The
\emph{factor of five} (two endpoint identifications, one middle
identification, two $V$-hops) is the structural feature
exploited by the descent lemma below; the rest of the
construction is standard.

We say $T\colon X\to X$ is \emph{order-preserving} if
$(x,y)\in\bigcap\U$ implies $(Tx,Ty)\in\bigcap\U$. Order-preservation
descends $T$ to a well-defined map
$\widetilde T\colon\widetilde X\to\widetilde X$ given by
$\widetilde T(\pi(x))=\pi(Tx)$. Well-definedness requires that
$x\equiv_\U y$ implies $Tx\equiv_\U Ty$, which we verify:
$x\equiv_\U y$ unpacks to $(x,y),(y,x)\in\bigcap\U$, and
order-preservation applied to each gives $(Tx,Ty),(Ty,Tx)\in\bigcap\U$,
i.e.\ $Tx\equiv_\U Ty$.

\begin{lemma}\label{lem:descent}
Let $T\colon X\to X$ be order-preserving. If $x_0$ is a $\U$-fixed
point of $T$, then $\pi(x_0)$ is a $\widetilde\U$-fixed point of
$\widetilde T$. If $T$ is also an involution, i.e.\ $T\circ T=\mathrm{id}_X$,
then $\widetilde T$ is an involution and
$\widetilde T(\pi(x_0))=\pi(x_0)$ in $\widetilde X$, equivalently
$T(x_0)\equiv_\U x_0$.
\end{lemma}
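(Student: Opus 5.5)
The plan is to read everything off the characterisation \eqref{eq:quotient-characterisation} and the defining relation $\widetilde T\circ\pi=\pi\circ T$. We never touch entourages of $\widetilde\U$ directly; the only structural input is that $\widetilde\U$ is $T_0$, which is part of the standard construction recalled above.

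\emph{First claim.} Let $x_0$ be a $\U$-fixed point, so $(x_0,Tx_0)\in\bigcap\U$ by Definition~\ref{def:fixed-start-end}(a). Applying \eqref{eq:quotient-characterisation} with $x=x_0$ and $y=Tx_0$ gives $\pi(x_0)\le_{\widetilde\U}\pi(Tx_0)$. Since $\pi(Tx_0)=\widetilde T(\pi(x_0))$, this says $(\pi(x_0),\widetilde T\pi(x_0))\in\bigcap\widetilde\U$. Hence $\pi(x_0)$ is a $\widetilde\U$-fixed point of $\widetilde T$.

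\emph{Involution.} Suppose $T\circ T=\mathrm{id}_X$. For any $\pi(x)$ we have $\widetilde T\widetilde T\pi(x)=\widetilde T\pi(Tx)=\pi(TTx)=\pi(x)$. Since $\pi$ is surjective, $\widetilde T\circ\widetilde T=\mathrm{id}_{\widetilde X}$.

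\emph{Reverse inequality.} This is the one step with actual content. We need $(Tx_0,x_0)\in\bigcap\U$. Apply order-preservation of $T$ to $(x_0,Tx_0)\in\bigcap\U$. This gives $(Tx_0,TTx_0)=(Tx_0,x_0)\in\bigcap\U$. Together with the hypothesis, $x_0\equiv_\U Tx_0$, which is the stated equivalent form. Translating through \eqref{eq:quotient-characterisation} gives $\widetilde T\pi(x_0)\le_{\widetilde\U}\pi(x_0)$. With the first claim we have both inequalities, and antisymmetry of $\bigcap\widetilde\U$ (the $T_0$ property of the quotient) gives $\widetilde T(\pi(x_0))=\pi(x_0)$. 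Alternatively, $\pi(Tx_0)=\pi(x_0)$ holds by the definition of $\pi$ as the quotient by $\equiv_\U$, so $T_0$ of $\widetilde\U$ is not even needed there.

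\emph{Equivalence with $T(x_0)\equiv_\U x_0$.} The relation $\widetilde T(\pi(x_0))=\pi(x_0)$ means $\pi(Tx_0)=\pi(x_0)$, which is exactly $Tx_0\equiv_\U x_0$. So the equivalence holds by definition of $\pi$. The main obstacle is only keeping the orientation straight in the reverse-inequality step. We should check that order-preservation is applied to the correct ordered pair, so that $TTx_0$ lands in the second slot.
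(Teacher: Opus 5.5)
Your proof is correct, and its core is the paper's: order-preservation applied to the pair $(x_0,Tx_0)$, together with $T\circ T=\mathrm{id}_X$, gives the reverse inequality. The difference is where you do it. You work upstairs in $X$, whereas the paper works in the quotient, applying order-preservation of $\widetilde T$ and then antisymmetry of $\le_{\widetilde\U}$.

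This changes what each proof depends on.
\begin{itemize}
\item As phrased, the paper's step needs $\widetilde T$ to be $\le_{\widetilde\U}$-monotone. That uses the nontrivial reverse implication $\pi(x)\le_{\widetilde\U}\pi(y)\Rightarrow(x,y)\in\bigcap\U$ of \eqref{eq:quotient-characterisation}. It also uses the $T_0$ property of $\widetilde\U$, which that implication delivers.
\item Your first claim uses only the easy forward implication of \eqref{eq:quotient-characterisation}.
\item In your alternative ending you get $\widetilde T(\pi(x_0))=\pi(Tx_0)=\pi(x_0)$ directly from $Tx_0\equiv_\U x_0$ and the definition of $\pi$. So you need neither $T_0$ of the quotient nor the involution property of $\widetilde T$ for the fixed-point conclusion.
\item You also verify that $\widetilde T$ is an involution, explicitly via surjectivity of $\pi$; the paper only asserts this.
\end{itemize}

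The paper's version has the merit of reading as an intrinsic statement about $(\widetilde X,\widetilde\U,\widetilde T)$, in line with the transfer philosophy of Section~\ref{sec:quotient}. Yours shows that the real content is the relation $x_0\equiv_\U Tx_0$ in $X$ itself, with the quotient serving only as bookkeeping.
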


\begin{proof}
$(x_0,Tx_0)\in\bigcap\U$ implies $\pi(x_0)\le_{\widetilde\U}\pi(Tx_0)
=\widetilde T(\pi(x_0))$, which is precisely a $\widetilde\U$-fixed
point statement.

If $T$ is an involution, then $\widetilde T\circ\widetilde T=
\mathrm{id}_{\widetilde X}$. Order-preservation propagated to the
quotient gives, on top of $\pi(x_0)\le_{\widetilde\U}\widetilde T(\pi(x_0))$,
also
\[
  \widetilde T(\pi(x_0))\le_{\widetilde\U}\widetilde T(\widetilde T(\pi(x_0)))=\pi(x_0).
\]
Antisymmetry on $\widetilde X$ (the $T_0$ property) yields
$\widetilde T(\pi(x_0))=\pi(x_0)$, equivalently $Tx_0\equiv_\U x_0$.
\end{proof}

\begin{theorem}\label{thm:involution}
Let $(X,\U)$ be a quasi-uniform space (not necessarily $T_0$),
$T\colon X\to X$ an order-preserving involution. If there exists
$x^*\in X$ with $U(x^*)\cap U^{-1}(Tx^*)\neq\emptyset$ for every
$U\in\U$, then $T(x^*)\equiv_\U x^*$. In particular, $\pi(x^*)$ is a
genuine fixed point of $\widetilde T$ in $\widetilde X$, and when $\U$
is $T_0$ this gives $T(x^*)=x^*$.
\end{theorem}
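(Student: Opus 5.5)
The plan is to chain three earlier results: Proposition~\ref{prop:triv-single} converts the entourage-fibre hypothesis into a $\U$-fixed-point statement, Lemma~\ref{lem:descent} transports it to the $T_0$-quotient, and the characterisation~\eqref{eq:quotient-characterisation} brings the conclusion back to $X$. Proposition~\ref{prop:triv-single} uses only the filter axioms (U1)--(U2) and needs no $T_0$ hypothesis. Applied to $T$ and $x^*$, it turns $U(x^*)\cap U^{-1}(Tx^*)\neq\emptyset$ for every $U\in\U$ into $(x^*,Tx^*)\in\bigcap\U$. So $x^*$ is a $\U$-fixed point of $T$.

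Next I will get the reverse relation directly on $X$, which shows the quotient is not strictly needed for the equivalence. Order-preservation applied to $(x^*,Tx^*)\in\bigcap\U$ gives $(Tx^*,T(Tx^*))\in\bigcap\U$. The involution property $T\circ T=\mathrm{id}_X$ rewrites this as $(Tx^*,x^*)\in\bigcap\U$. Together the two relations say exactly $Tx^*\equiv_\U x^*$.

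For the ``in particular'' clause I will invoke Lemma~\ref{lem:descent}. It shows that $\widetilde T$, which is well defined by order-preservation, is an involution on the $T_0$ space $(\widetilde X,\widetilde\U)$, and that $\widetilde T(\pi(x^*))=\pi(x^*)$. Equivalently, by~\eqref{eq:quotient-characterisation} and the definition of $\pi$, $\pi(Tx^*)=\pi(x^*)$ holds precisely because $Tx^*\equiv_\U x^*$. When $\U$ is already $T_0$, the preorder $\bigcap\U$ is antisymmetric. Then $Tx^*\equiv_\U x^*$ forces $Tx^*=x^*$, since the equivalence classes are singletons and $\pi$ is the identity.

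The only delicate point is checking that Lemma~\ref{lem:descent} really applies without $T_0$ on $X$. That lemma relies on antisymmetry in $\widetilde X$, and hence on $\widetilde\U$ being $T_0$ with~\eqref{eq:quotient-characterisation}, both of which the paper takes from the standard $T_0$-quotient construction. Because the direct argument in the second paragraph already yields $Tx^*\equiv_\U x^*$ without the quotient, I expect no real obstacle. The quotient is used only to phrase the conclusion as a genuine fixed point of $\widetilde T$.
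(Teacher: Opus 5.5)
Your proof is correct. It differs from the paper's in where the key step is carried out.

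The paper uses Proposition~\ref{prop:triv-single} exactly as you do, then obtains $Tx^*\equiv_\U x^*$ by passing to the quotient via Lemma~\ref{lem:descent}:
\begin{itemize}
\item it pushes $(x^*,Tx^*)\in\bigcap\U$ down to $\pi(x^*)\le_{\widetilde\U}\widetilde T(\pi(x^*))$;
\item it applies order-preservation of $\widetilde T$ and $\widetilde T\circ\widetilde T=\mathrm{id}$ to get the reverse inequality;
\item it invokes antisymmetry of $\le_{\widetilde\U}$ to conclude $\widetilde T(\pi(x^*))=\pi(x^*)$;
\item only then does it unpack this as $Tx^*\equiv_\U x^*$.
\end{itemize}

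You perform the same order-preservation and involution step directly on $X$. This yields $(Tx^*,x^*)\in\bigcap\U$ immediately. You then read off $\widetilde T(\pi(x^*))=\pi(Tx^*)=\pi(x^*)$ from the definitions alone. For that last step you need neither Lemma~\ref{lem:descent} nor \eqref{eq:quotient-characterisation}. You only need that $\pi$ is the quotient by $\equiv_\U$ and that $\widetilde T$ is well defined. Your citation of \eqref{eq:quotient-characterisation} there is harmless but superfluous.

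Your route is shorter and independent of the $T_0$-quotient construction: the (U2) verification for $\widetilde\U$, its $T_0$ property, and the order-preservation of $\widetilde T$ with respect to $\le_{\widetilde\U}$. It also makes visible that the quotient is not what removes the $T_0$ obstruction here. $T_0$ enters only in the final upgrade of $\equiv_\U$ to equality.

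The paper's route presents the theorem as an instance of its descent mechanism: a $T_0$ argument run in $(\widetilde X,\widetilde\U)$ and lifted back along $\pi$. That is the template it proposes to generalise in Remark~\ref{rem:functoriality} and the concluding remarks.
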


\begin{proof}
The hypothesis $U(x^*)\cap U^{-1}(Tx^*)\neq\emptyset$ for every
$U\in\U$ is condition~(i) of
Proposition~\ref{prop:triv-single} applied to $T$ at $x^*$. The
equivalence of (i) and (ii) in that proposition gives
$(x^*,Tx^*)\in\bigcap\U$, i.e.\ $x^*$ is a $\U$-fixed point of
$T$ in the sense of
Definition~\ref{def:fixed-start-end}(a).

We now apply Lemma~\ref{lem:descent} to $x_0:=x^*$. The lemma
requires $T$ to be order-preserving (a standing hypothesis here)
and to have a $\U$-fixed point (just established). The second
clause of the lemma requires $T$ to be an involution, which is
also a standing hypothesis. The lemma therefore concludes
$\widetilde T(\pi(x^*))=\pi(x^*)$ in $\widetilde X$, equivalently
$Tx^*\equiv_\U x^*$. This gives the asserted conclusion
$T(x^*)\equiv_\U x^*$.

When $\U$ is $T_0$, the preorder $\bigcap\U$ is antisymmetric, so
$Tx^*\equiv_\U x^*$ (i.e.\ both $(Tx^*,x^*)\in\bigcap\U$ and
$(x^*,Tx^*)\in\bigcap\U$) forces $Tx^*=x^*$. This recovers
\cite[Theorem~2.3]{Gaba2018} as a special case.
\end{proof}

\begin{remark}
Theorem~\ref{thm:involution} acquires content when paired with
Theorem~\ref{thm:contraction}: contraction produces a $\U$-fixed
point, the quotient lifts it to a fixed point of $\widetilde T$,
and the involution hypothesis closes the gap between $\le$ and $=$
when $T_0$ is absent.
\end{remark}

The next example shows that the conclusion of
Theorem~\ref{thm:involution} is genuinely sharper than ordinary
fixed-point existence in the absence of $T_0$, and that the
$T_0$-quotient is essential for the conclusion to make sense.

\begin{example}[A non-$T_0$ cylinder]\label{ex:cylinder}
Let $X:=[0,1]\times\left\lbrace 0,1 \right \rbrace$, with quasi-pseudometric
\[
  d\left((s,a),(t,b)\right):=\max\left\lbrace s-t,0 \right \rbrace
\]
that depends only on the first coordinate (the upper
quasi-pseudometric of Example~\ref{ex:asymmetric-norm} composed
with projection onto the first factor). The associated
quasi-uniformity $\U$ has
\[
  \bigcap\U=\left\lbrace((s,a),(t,b)):s\le t\right \rbrace,
\]
so $(s,0)\equiv_\U(s,1)$ for every $s\in[0,1]$ and $\U$ is not
$T_0$. The $T_0$-quotient $\widetilde X$ identifies with $[0,1]$
endowed with the upper quasi-pseudometric of
Example~\ref{ex:asymmetric-norm}; its symmetrisation is the
Euclidean uniformity on $[0,1]$, which is complete, so
$(\widetilde X,\widetilde\U)$ is bicomplete.

Define $T\colon X\to X$ by $T(s,a):=(s,1-a)$. The map $T$ swaps
the two sheets of the cylinder, fixes the first coordinate, and
satisfies $T\circ T=\mathrm{id}_X$, so it is an order-preserving
involution. For every $x^*=(s,a)\in X$,
$d(x^*,Tx^*)=d(x^*,(s,1-a))=0$, so the existence hypothesis
$U(x^*)\cap U^{-1}(Tx^*)\neq\emptyset$ of
Theorem~\ref{thm:involution} holds (with witness $x^*$ itself).
The conclusion $T(x^*)\equiv_\U x^*$ also holds, since $T(x^*)$
and $x^*$ share their first coordinate. On the quotient,
$\widetilde T$ is the identity on $[0,1]$, and every point of
$\widetilde X$ is a fixed point of $\widetilde T$.

The example is sharp in two ways. First, the conclusion cannot be
strengthened to $T(x^*)=x^*$, because $T$ has no fixed point in
$X$: $1-a\neq a$ for $a\in\left\lbrace 0,1 \right \rbrace$. The involution sharpens the
preorder relation $x^*\le Tx^*$ to the equivalence
$x^*\equiv_\U Tx^*$, and that equivalence is the strongest
relation on $X$ available in the absence of $T_0$. Second, the
quotient is essential: without it, the statement ``$T$ has a fixed
point'' is false, while at the quotient level it becomes
universally true.

The example also shows why the involution hypothesis is not
compatible with the contraction hypothesis of
Theorem~\ref{thm:contraction}: an involution that is a strict
contraction would force $\mathrm{id}_X=T\circ T$ to be a strict
contraction, which is impossible unless $X$ is a point. The two
theorems cover disjoint regimes, and the conclusion of
Theorem~\ref{thm:involution} is the strongest one that the
involution hypothesis can deliver on its own. Figure~\ref{fig:cylinder}
shows the geometry.
\end{example}

\begin{figure}[htbp]
\centering
\begin{tikzpicture}[x=1cm, y=1cm, >={Stealth[length=2.5mm]}]
  \draw[thick] (0,1.4) -- (8,1.4);
  \node[left, font=\small] at (0,1.4) {$[0,1]\times\left\lbrace 1 \right \rbrace$};
  \foreach \x/\lab in {0/0, 2/{\frac14}, 4/{\frac12}, 6/{\frac34}, 8/1} {
    \draw (\x,1.4) -- (\x,1.55);
    \node[above, yshift=2pt, font=\scriptsize] at (\x,1.55) {$\lab$};
  }
  \draw[thick] (0,-0.6) -- (8,-0.6);
  \node[left, font=\small] at (0,-0.6) {$[0,1]\times\left\lbrace 0 \right \rbrace$};
  \foreach \x/\lab in {0/0, 2/{\frac14}, 4/{\frac12}, 6/{\frac34}, 8/1}
    \draw (\x,-0.6) -- (\x,-0.45) node[below, yshift=-2pt, font=\scriptsize] {$\lab$};
  \foreach \x in {0,2,4,6,8} {
    \draw[dashed, gray!70] (\x,-0.6) -- (\x,1.4);
  }
  \filldraw[red!70!black] (3.2,1.4) circle (1.6pt) node[above=1pt, font=\scriptsize] {$(s,1)$};
  \filldraw[red!70!black] (3.2,-0.6) circle (1.6pt) node[below=1pt, font=\scriptsize, yshift=-6pt] {$(s,0)$};
  \draw[->, thick, red!70!black] (3.05,1.3) to[bend right=18] node[midway, left, font=\scriptsize] {$T$} (3.05,-0.5);
  \draw[->, thick, red!70!black] (3.35,-0.5) to[bend right=18] node[midway, right, font=\scriptsize] {$T$} (3.35,1.3);
  \node[font=\footnotesize, align=center] at (10,0.4)
    {$(s,0)\equiv_\U(s,1)$\\[1pt] for all $s$};
  \draw[->, thick, blue!60!black] (1,-1.0) -- (5,-1.0)
        node[midway, below, font=\scriptsize] {forward direction of $\bigcap\U$};
\end{tikzpicture}
\caption{The non-$T_0$ cylinder of Example~\ref{ex:cylinder}.
Dashed verticals indicate the equivalence
$(s,0)\equiv_\U(s,1)$. The map $T$ swaps the two sheets along
each equivalence class, has no ordinary fixed point, but satisfies
$T(x^*)\equiv_\U x^*$ for every $x^*\in X$. The $T_0$-quotient
$\widetilde X=[0,1]$ collapses the dashed pairs and turns
$\widetilde T$ into the identity.}
\label{fig:cylinder}
\end{figure}

\begin{remark}[Functoriality of the $T_0$-quotient]\label{rem:functoriality}
The $T_0$-quotient is functorial. Given an order-preserving
uniformly continuous map $T\colon(X,\U)\to(Y,\mathscr{V})$, the
fact that $T$ respects $\equiv_\U$ and $\equiv_{\mathscr{V}}$ gives
a unique descent
$\widetilde T\colon(\widetilde X,\widetilde\U)\to(\widetilde Y,\widetilde{\mathscr{V}})$
that is again order-preserving and uniformly continuous and that
fits in the expected commutative square with the projections.
Lemma~\ref{lem:descent} is the case $Y=X$, $\mathscr{V}=\U$. The
same functoriality underlies the $T_0$-reflection adjunction we
mention in Section~\ref{sec:conclusion}: an arbitrary fixed-point
theorem on $(\widetilde X,\widetilde\U)$ should lift to a
statement on $(X,\U)$ modulo $\equiv_\U$ via the unit of the
adjunction.
\end{remark}

\section{Concluding remarks}\label{sec:conclusion}

The framework of~\cite{Gaba2018} provides a working language for
fixed-point statements in the asymmetric setting. The contraction
principle of Section~\ref{sec:contraction} and the extensions of
Sections~\ref{sec:endpoint}--\ref{sec:quotient} attach to that
language a body of existence results whose hypotheses are
checkable directly from $(X,\U,\mathcal{D},T)$.

The paper develops five routes to $\U$-fixed points and
$\U$-startpoints, each with its own scope.
Theorem~\ref{thm:contraction} (the Banach-style contraction
principle) requires a contractive self-map on a $T_0$ bicomplete
quasi-uniform space and produces a unique fixed point together
with a Picard error bound. Theorem~\ref{thm:boyd-wong} (the
Boyd--Wong companion) loosens the contraction constant to a
comparison function at the cost of a non-geometric convergence
rate. Corollary~\ref{cor:multi-contraction} (the selector route)
lifts the contraction principle to multivalued maps by passing to
a contractive selector, and
Theorem~\ref{thm:hausdorff-direct} (the direct quasi-Hausdorff
route, extending~\cite{GabaKarapinarPetruselRadenovic2020} from
quasi-pseudometric to quasi-uniform spaces) does so without a
selector, at the cost of a two-sided weakly contractive condition
on $T$ that forces $T\xi^*$ to collapse to the singleton
$\left\lbrace \xi^* \right \rbrace$. Proposition~\ref{prop:knaster-tarski} (the
Knaster--Tarski companion) abandons completeness of $\U$
altogether and instead asks for order-completeness of
$(X,\le_\U)$ and a monotone selector. Finally,
Theorem~\ref{thm:involution} (the involution theorem) abandons
$T_0$ and produces a fixed point at the level of the
$T_0$-quotient, with the equality $T(x^*)=x^*$ relaxed to the
equivalence $T(x^*)\equiv_\U x^*$. The five results cover
essentially disjoint regimes: contractive maps need completeness,
involutions need $T_0$ to upgrade to genuine fixed points, and
monotone non-contractive maps need order-completeness instead of
either.

Three directions remain open. First, the $T_0$-quotient
construction of Section~\ref{sec:quotient} suggests a transfer
principle beyond the involution theorem treated here: every
fixed-point theorem on a $T_0$ quasi-uniform space should lift,
modulo $\equiv_\U$, to a statement on an arbitrary quasi-uniform
space. The natural form is that the $T_0$-reflection
$\mathrm{QU}\to\mathrm{QU}_{T_0}$ admits a left adjoint
compatible with the underlying-set functor and that fixed-point
theorems extend along the adjunction; this would replace the
hand-built lemmas of Section~\ref{sec:quotient} by a single
functorial statement. Second, the Ulam--Hyers stability analysis
of the start-point problem
in~\cite[Theorem~4]{GabaKarapinarPetruselRadenovic2020}, in
which $\varepsilon$-approximate end-points lie within
$C\varepsilon$ of an exact start-point, deserves a quasi-uniform
analogue compatible with the perturbation-stability estimate of
Proposition~\ref{prop:stability}. Third, bicompleteness is a
strong completeness notion. Because the two lines
of~\eqref{eq:contraction} are logically equivalent
(Remark~\ref{rem:sharpness}), the substantive question is purely
about \emph{completeness}: does left $K$-completeness, the
regime of~\cite{GabaKarapinarPetruselRadenovic2020}, suffice for
Theorem~\ref{thm:contraction} as stated? A
$\mathcal{D}$-contraction $T$ on a left $K$-complete space
generates a Picard sequence that is left $K$-Cauchy in every
$d_\alpha$, hence $\tau(d_\alpha)$-convergent to some $\xi^*$ by
left $K$-completeness; the forward inequality and the
$\Us$-continuity of $T$ (which it inherits from non-expansiveness
in each $d_\alpha^s$) give $T\xi_n\to T\xi^*$ in
$\tau(d_\alpha)$, and the $T_0$ hypothesis on $\U$ together with
the Hausdorff property of $\tau(d_\alpha^s)$ should force
$T\xi^*=\xi^*$. The missing piece is the bridge between
$\tau(d_\alpha)$-convergence and $\Us$-convergence, which is
automatic under bicompleteness and not automatic under left
$K$-completeness alone. We conjecture
Theorem~\ref{thm:contraction} extends to left $K$-completeness
verbatim, with the conjugate-side $K$-completeness of $\Uinv$
needed for the endpoint corollary of
Section~\ref{sec:endpoint}; settling this would close the
remaining gap between the present framework and that
of~\cite{GabaKarapinarPetruselRadenovic2020}. The analogue for
$D$-completeness in the sense of Doitchinov~\cite{Doitchinov}
and Smyth completeness~\cite{Smyth} requires identifying the
right modification of~\eqref{eq:contraction} on each
completeness type
(see~\cite{Andrikopoulos,CarlsonHicks,Kunzi,Romaguera} for the
underlying machinery), and remains open.

\section*{Acknowledgements}

The author thanks the AI Research and Innovation Nexus for Africa
(AIRINA Labs) and the African Center for Advanced Studies (ACAS)
for institutional support during the preparation of this work.

\section*{Funding and competing interests}

No external funding supported this work. The author declares no
competing interests.

\end{document}